\documentclass[12pt,fleqn]{article}

\usepackage{paralist}
\usepackage{enumitem}
\usepackage{a4}
\usepackage{amsthm}

\usepackage{ifthen}
\usepackage{amsmath}
\usepackage{amssymb}
\usepackage{mathtools} 
\usepackage{graphicx} 
\usepackage{rotating}
\usepackage{color}
\usepackage{lscape}

\usepackage{rotating}

\usepackage{textcomp} 

\usepackage{tikz}
\usetikzlibrary{calc,decorations.markings,matrix,arrows,positioning}
\tikzset{>=latex}

\makeatletter
\newcommand{\ostar}{\mathbin{\mathpalette\make@circled\star}}
\newcommand{\make@circled}[2]{%
  \ooalign{$\m@th#1\smallbigcirc{#1}$\cr\hidewidth$\m@th#1#2$\hidewidth\cr}%
}
\newcommand{\smallbigcirc}[1]{%
  \vcenter{\hbox{\scalebox{0.77778}{$\m@th#1\bigcirc$}}}%
}
\makeatother

\swapnumbers
\theoremstyle{plain}
\newtheorem{theorem}{Theorem}[section]
\newtheorem{lemma}[theorem]{Lemma}
\newtheorem{proposition}[theorem]{Proposition}
\newtheorem{corollary}[theorem]{Corollary}

\theoremstyle{definition}
\newtheorem{definition}[theorem]{Definition}

\newtheorem{example}[theorem]{Example}

\newtheorem{remark}[theorem]{Remark}
\newtheorem{remarks}[theorem]{Remarks}

\newtheorem{notation}[theorem]{Notation}

\newtheorem{Galoisconnections}[theorem]{Galois connections}

\newtheorem*{remarksnote}{Remarks}

\newtheorem*{remarknote}{Remark}

\numberwithin{equation}{theorem} 

\newcommand{\New}[1]{\emph{#1}}
\let\Def=\New

\newcommand{\duclose}[2][d]{\prescript{#1\mspace{-1.8mu}}{}{\overline{#2}}}
\newcommand{\du}[1][d]{\ensuremath{\text{\sf R}_{#1}}}

\newcommand{\Rclose}[2][d]{\prescript{#1\mspace{-1.8mu}}{}{\overline{#2}}}
\newcommand{\dR}[1][d]{\ensuremath{\text{\sf R}_{#1}}}

\DeclareMathOperator{\End}{End}

\DeclareMathOperator{\Pol}{Pol}
\newcommand{\Pola}[1][]{\Pol^{(#1)}}
\DeclareMathOperator{\Inv}{Inv}

\DeclareMathOperator{\Op}{Op}
\newcommand{\Opa}[1][]{\Op^{(#1)}}
\DeclareMathOperator{\Rel}{Rel}
\newcommand{\Rela}[1][]{\Rel^{(#1)}}

\DeclareMathOperator{\RRel}{{\sf R\mspace{-1.8mu}Rel}}

\newcommand{\RRela}[1][]{\RRel^{(#1)}}

\newcommand{\dRRel}[1][d]{\prescript{#1\mspace{-1.7mu}}{}\RRel}

\DeclareMathOperator{\GQuord}{GQuord\mspace{1.7mu}}
\newcommand{\dGQuord}[1][d]{\prescript{#1\mspace{-1.7mu}}{}\GQuord} 
\newcommand{\GQuorda}[1][]{\GQuord^{(#1)}} 

\DeclareMathOperator{\wGQuord}{wGQuord\mspace{1.7mu}}
\newcommand{\wGQuorda}[1][]{\wGQuord^{(#1)}} 

\DeclareMathOperator{\LOC}{LOC}
\newcommand{\dLOC}[1][d]{#1\mbox{-}\!\LOC}

\newcommand{\X}{\ensuremath{\Xi}}

\DeclareMathOperator{\gQuord}{gQuord} 

\DeclareMathOperator{\Ima}{Im} 

\newcommand{\preserves}{\mbox{ $\triangleright$ }}

\newcommand{\id}{\mathsf{id}}

\newcommand{\cK}{\mathcal{K}}
\newcommand{\cL}{\mathcal{L}}
\newcommand{\cC}{\mathcal{C}}

\newcommand{\bDelta}{\mathbf{\Delta}}

\newcommand{\bc}{\boldsymbol{c}} 

\newcommand{\bOne}{\mathbf{1}}

\newcommand{\wXi}{\Xi^{\text{\rm w}}}

\newcommand{\N}{\mathbb{N}}
\newcommand{\Z}{\mathbb{Z}}

\newcommand{\Sg}[2][]{\ensuremath{\langle #2 \rangle_{#1}}}

\newcommand{\trl}{\mathsf{trl}} 
\newcommand{\btrl}{\mathsf{btrl}} 
\newcommand{\ftrl}{\mathsf{ftrl}} 

\newcommand{\ebtrl}{\mathsf{ebtrl}} 
\newcommand{\eftrl}{\mathsf{eftrl}} 

\DeclareMathOperator{\Pow}{\mathfrak{P}}

\let\rho=\varrho
\let\epsilon=\varepsilon
\let\phi=\varphi
\let\kappa=\varkappa

\let \restrictionORIGINAL=\restriction
\renewcommand{\restriction}{\mathclose\restrictionORIGINAL}

\tikzset{myStyle/.style={baseline=(center.base), font=\small,
    every node/.style={inner sep=0.25em} }}

\NewDocumentCommand{\LinePic}{ O{} O{} O{} O{}  O{1} }{ 
  \begin{tikzpicture}[myStyle, scale=#5*1 ]
    \node (center) at (0,0) {\phantom{$\cdot$}}; 
    \path (0,0)  node (s1) {$#1$}
        ++(.5,0)  node (s2) {$#2$}
        ++(.5,0)  node (s3) {$#3$}
        ++(1,0)  node (s4) {$#4$};
    \draw (s1) -- (s2) -- (s3) -- (s4);
  \end{tikzpicture}
}  

\newcommand{\SquareUnwrapped}[4]{ 
  \node (center) at (0.5,-0.5) {\phantom{$\cdot$}}; 
  \path (0,0)  node (nw) {$#2$}
      ++(1,0)  node (ne) {$#4$}
      ++(0,-1) node (se) {$#3$}
      ++(-1,0) node (sw) {$#1$};
  \draw (nw) -- (ne) -- (se) -- (sw) -- (nw);
}  
\NewDocumentCommand{\SquareXY}{ O{} O{} O{} O{} O{1} O{1} }{ 
  \begin{tikzpicture}[myStyle, xscale=#5*1, yscale=#6*1 ]
    \SquareUnwrapped{#1}{#2}{#3}{#4}
  \end{tikzpicture}
}  
\NewDocumentCommand{\Square}{ O{} O{} O{} O{} O{1} }{ 
  \SquareXY[#1][#2][#3][#4][#5][#5]
}  

\NewDocumentCommand{\SquareAxes}{ O{} O{} O{1} O{0} O{1} }{  
  \begin{tikzpicture}[myStyle, scale=#3*0.8] 
    \node (center) at (0.5,0.5) {\phantom{$\cdot$}}; 
    \draw (0,0) -- node[above]{$#1$} (1,0) node[right]{$#4$}
      (0,0) -- node[left]{$#2$} (0,1) node[above]{$#5$};
  \end{tikzpicture}
}  
\NewDocumentCommand{\CubeAxes}{ O{} O{} O{} O{} O{} O{} O{} }{  
  \begin{tikzpicture}[myStyle, scale=#4*0.85] 
    \node (center) at (0.5,0.75) {\phantom{$\cdot$}}; 
    \draw (0,0) -- node[above]{$#1$} (1,0) node[right]{$#5$}
      (0,0) -- node[left]{$#2$} (0,1) node[above]{$#6$}
      (0,0) -- node[below left=-0.25em]{$#3$} (0.5,-0.5) node[below right=-0.2em]{$#7$};
  \end{tikzpicture}
}  

\newcommand{\CubeNodes}[8]{  
  \node at (0.75,-0.75) (center) {\phantom{$\cdot$}}; 
  \path (0,0)  node (back_nw)      {$#2$}
      ++(1,0)  node (back_ne)      {$#4$}
      ++(0,-1) node (back_se)      {$#3$}
      ++(-1,0) node (back_sw)      {$#1$}
        (0.5,0.5) node (front_nw) {$#6$}
      ++(1,0)  node (front_ne)     {$#8$}
      ++(0,-1) node (front_se)     {$#7$}
      ++(-1,0) node (front_sw)     {$#5$};
}  
\newcommand{\CubeUnwrapped}[8]{ 
  \CubeNodes{#1}{#2}{#3}{#4}{#5}{#6}{#7}{#8}
  \draw (back_nw) -- (back_ne) -- (back_se) -- (back_sw) -- (back_nw)
    (front_nw) -- (front_ne) -- (front_se) -- (front_sw) -- (front_nw)
    (back_nw) -- (front_nw)
    (back_ne) -- (front_ne)
    (back_se) -- (front_se)
    (back_sw) -- (front_sw);
}  
\newcommand{\CubeDUnwrapped}[8]{ 
  \CubeNodes{#1}{#2}{#3}{#4}{#5}{#6}{#7}{#8}
  \draw (front_nw) -- (front_ne) -- (front_se) -- (front_sw) -- (front_nw)
    (back_nw) -- (back_ne) (back_sw) -- (back_nw)
    (back_nw) -- (front_nw)
    (back_ne) -- (front_ne)
    (back_sw) -- (front_sw);
  \draw[densely dotted] (back_ne) -- (back_se) -- (back_sw)
    (back_se) -- (front_se);
}  
\NewDocumentCommand{\Cube}{ O{} O{} O{} O{} O{} O{} O{} O{} O{1} }{  
  \begin{tikzpicture}[myStyle, scale=#9*1 ]
    \CubeUnwrapped{#1}{#2}{#3}{#4}{#5}{#6}{#7}{#8}
  \end{tikzpicture}
}  

\NewDocumentCommand{\CubeDeep}{ O{} O{} O{} O{} O{} O{} O{} O{} O{1}  }{  
  \begin{tikzpicture}[myStyle, xscale=#9*1, yscale=1.5 ]
    \CubeUnwrapped{#1}{#2}{#3}{#4}{#5}{#6}{#7}{#8}
  \end{tikzpicture}
}  
\NewDocumentCommand{\CubeD}{ O{} O{} O{} O{} O{} O{} O{} O{} O{1} }{  
  \begin{tikzpicture}[myStyle, scale=#9*1 ]
    \CubeDUnwrapped{#1}{#2}{#3}{#4}{#5}{#6}{#7}{#8}
  \end{tikzpicture}
}  

\NewDocumentCommand{\DeltaZeroCubeD}{ O{} O{} O{} O{} O{} O{} O{} O{} O{1} }{  
  \begin{tikzpicture}[myStyle, scale=#9*1]
    \CubeDUnwrapped{#1}{#2}{#3}{#4}{#5}{#6}{#7}{#8}
    \draw (back_sw)  to[out=30,in=180-30] (back_se)
      (back_nw)  to[out=30,in=180-30] node[auto]{$\delta$} (back_ne)
      (front_sw) to[out=30,in=180-30] (front_se);
    \draw[dashed] (front_nw) to[out=30,in=180-30] (front_ne);
  \end{tikzpicture}
}  

\NewDocumentCommand{\DeltaOneCubeD}{ O{} O{} O{} O{} O{} O{} O{} O{} O{1} }{  
  \begin{tikzpicture}[myStyle, scale=#9*1]
    \CubeDUnwrapped{#1}{#2}{#3}{#4}{#5}{#6}{#7}{#8}
    \draw (back_sw)  to[out=120,in=240]node[left]{$\delta$} (back_nw)
      (back_se)  to[out=120,in=240]  (back_ne)
      (front_sw) to[out=120,in=240] (front_nw);
  \end{tikzpicture}
}  
\NewDocumentCommand{\DeltaTwoCubeD}{ O{} O{} O{} O{} O{} O{} O{} O{} O{1} }{  
  \begin{tikzpicture}[myStyle, scale=#9*1]
    \CubeDUnwrapped{#1}{#2}{#3}{#4}{#5}{#6}{#7}{#8}
    \draw (back_sw) to[out=180+30,in=180] node[auto,swap]{$\delta$} (front_sw)
      (back_se) to[out=0,in=30] (front_se)
      (back_nw) to[out=180+30,in=180] (front_nw);
    \draw[dashed] (back_ne) to[out=0,in=30] (front_ne);
  \end{tikzpicture}
}  

\newcommand{\ThreeSquareUnwrapped}[9]{ 
  \node (center) at (1,-1) {\phantom{$\cdot$}}; 
  \path (0,0)  node (aa) {$#1$}
      ++(1,0)  node (ba) {$#2$}
      ++(1,0)  node (ca) {$#3$}
      ++(0,-1) node (ab) {$#4$}
      ++(-1,0) node (bb) {$#5$}
      ++(-1,0) node (cb) {$#6$}
      ++(0,-1) node (ac) {$#7$}
      ++(1,0)  node (bc) {$#8$}
      ++(1,0)  node (cc) {$#9$};
  \draw (aa) -- (ba) -- (ca);
  \draw (ab) -- (bb) -- (cb);
  \draw (ac) -- (bc) -- (cc);
  \draw (aa) -- (cb) -- (ac);
  \draw (ba) -- (bb) -- (bc);
  \draw (ca) -- (ab) -- (cc);
}  
\NewDocumentCommand{\ThreeSquareXY}{ O{} O{} O{} O{} O{} O{} O{} O{} O{} }{ 
  \begin{tikzpicture}[myStyle, xscale=1, yscale=1 ]
    \ThreeSquareUnwrapped{#1}{#2}{#3}{#4}{#5}{#6}{#7}{#8}{#9}
  \end{tikzpicture}
}  

\author{Andrew Moorhead\footnote{Funding from the ERC (Grant Agreement no. 101071674, POCOCOP). Views and opinions expressed are however
those of the authors only and do not necessarily reflect those of the European Union or the European Research
Council Executive Agency.}
\and Reinhard P\"oschel%
 \and Institute of Algebra\\TU Dresden (Germany)
}

\date{September 7, 2026}

\title{Higher-dimensional generalized quasiorders}

\begin{document}

    \maketitle

\textbf{Abstract.} \small It is well known that the polymorphism clone of a
      quasiorder is determined by its unary part, in the sense that
      any operation is a polymorphism if it satisfies the condition
      that all unary functions obtained by fixing some of its
      variables at constant values are polymorphisms. We show that a
      clone satisfies an analogous property for a particular
      fixed arity $d$ if and only if it is determined by a collection of what we call higher-dimensional
      generalized quasiorders whose dimension is equal to $d$. We define higher-dimensional generalized quasiorders with the appropriate generalizations of reflexivity and transitivity to the higher-dimensional setting, which rely on an explicit analysis of `rectangular' arity.

\normalsize
\section*{Introduction}

The `preservation' relation $f \preserves \rho$ between a finitary
operation and a finitary relation prescribes a fundamental Galois
connection between the lattice of finitary operation
clones and the lattice of finitary 
relational clones over a finite set, which since its discovery has guided
research for decades. Broadly speaking, much of this research is
motivated by the following template pair of questions:
\begin{itemize}
\item[] \emph{Given a set of relations which satisfy some property,
    what can be said about its corresponding (operation) clone of
    polymorphisms?}

\item[] \emph{Given a set of operations which satisfy some property,
    what can be said about its corresponding (relational) clone of
    invariant relations?}
\end{itemize}

An early result in the spirit of the first kind of question was
discovered by Mal'cev~\cite[Teorema~1, p.4]{Mal1954} (English
translation \cite{Mal1963}): an operation $f$ preserves an
equivalence 
relation $\rho$ if and only if all unary functions obtained from $f$
by fixing all but at most one of its variables at constants preserve
$\rho$ (we have modified the statement slightly in anticipation of our
general theory and allow constant unary 
functions obtained by fixing \emph{all} variables of $f$ at a constant
value). The functions obtained
in this manner are called \emph{(unary) translations} of $f$; we
denote the set of all such by $\trl_1 (f)$. Actually, symmetry is not
necessary in Mal'cev's argument, so in fact every quasiorder satisfies
the following equivalence:
\[
  f \preserves \rho \iff \trl_1 (f) \preserves \rho.
\]
The truth of the above equivalence (which we denote $\Xi_1(\rho)$) for
quasiorders $\rho$ leads naturally to the following two questions:

\begin{itemize}
\item[] \emph{What other relations, aside from quasiorders, satisfy
    $\Xi_1$?}
\item[] \emph{What can be said about the analogous higher arity
    conditions?}
\end{itemize}

The first question is answered by the authors of~\cite{JakPR2024}. 
In their work, they
discover a novel generalization of transitivity to arbitrary arity
relations and using this generalization define the class of
\emph{generalized quasiorders} to be those relations which are both
reflexive (contain all constant tuples) and transitive. They prove
that, 
indeed, $1$-dimensional generalized quasiorders satisfy
$\Xi_1$. Moreover, they show that any other $\rho$ which satisfies
$\Xi_1$ has a primitive positive definition in a generalized
quasiorder (which can always taken to be the unary part of
$\Pol\rho$, considered as a relation). Within the
theory we present in this article, the class of relations they study become the
\emph{$1$-dimensional} generalized quasiorders.

The second question turns out to have connections to commutator
theory, which we introduce by giving a simple example. Consider the
minority operation $m$ on the two element set $\{0,1\}$ defined by
$m(x,y,z) = x+ y + z $, where here addition is the standard addition
in $\mathbb{Z}_2$, and let
$ \rho:= \{ (x,y,z, m(x,y,z)) \mid x,y,z \in \{0,1\} \} $ be the graph of
$m$. It is well-known that $\Pol\rho$ is a maximal among the proper
clones over $\{0,1\}$ and that it consists of all
$\mathbb{Z}_2$-polynomial functions, i.e.\ sums of the form
$ \sum x_i + c. $ Since every unary operation on $\{0,1\}$ belongs to
$\Pol\rho$, it follows that $\Xi_1(\rho)$ fails. However, it turns
out that $\rho$ \emph{does} satisfy $\Xi_2$, which is the following
implication:
\[
  f \preserves \rho \iff \trl_2(f) \preserves \rho,
\]
where $\trl_2(f)$ is the set of all binary functions obtained from $f$
by fixing all but at most two variables at constants (the arity of
unary and constant functions obtained in this way is increased with
the appropriate addition of fictitious variables).

The connection to the commutator is that $\rho$ is actually the
relation used to check that $\mathbb{Z}_2$ is an \emph{abelian}
algebra~\cite{FreeM1987}. In this context, $\rho$ is given the name $M(\bOne,\bOne)$
and referred to as the \emph{$(\bOne,\bOne)$-matrices}, where here `$\bOne$' is
understood to be the full congruence on $\mathbb{Z}_2$. Although it is
usually glossed over in the literature, $\rho$ and $M(\bOne,\bOne)$ are
actually different objects, since the former is a subset of
$\{0,1\}^4$ (and hence has a coordinate `geometry' that is a line),
while the latter is a subset of $2 \times 2$ (and hence has a
coordinate `geometry' that looks like a square). That is, elements of
$\rho$ are tuples, which we can depict as vertex labeled `lines':
\[
  \rho = \left\{ \LinePic[x][y][z][m(x,y,z)][1.5]\middle|\; x,y,z \in \{0,1\}
    \subseteq \{0,1\}^4 \right\},
\]
while elements of $M(\bOne,\bOne)$ are usually defined to be
$2\times2$-matrices, which we can depict as vertex labeled `squares':
\[
  M(\bOne,\bOne) = \left\{ \SquareXY[y][z][x][m(x,y,z)][1.5][1] \middle|\; x,y,z \in
    \{0,1\} \right\} \subseteq \{0,1\}^{2\times 2}
\]

It is not hard to check that $M(\bOne,\bOne)$ is what is called a
\emph{$2$-dimensional equivalence relation}, which means
that it is reflexive, symmetric, and transitive when expressed either
as a binary relation on its set of columns or as a binary relation on
its set of rows.

It turns out that any $2$-dimensional equivalence relation satisfies
the property $\Xi_2$, and it follows from this observation that the
binary commutator for an algebra is determined by its binary
polynomials. This generalizes to higher dimensions: a collection of
vertex-labeled $d$-dimensional hypercubes is called a
\emph{$d$-dimensional equivalence relation} if each of the $d$-many
ways of interpreting it as a collection of pairs of
$(d-1)$-dimensional labeled cubes is an equivalence
relation. Similarly to the unary and binary case, a $d$-dimensional
equivalence relation satisfies the equivalence $\Xi_d$ (which is
defined by substituting $\trl_1$ in $\Xi_1$ with the natural
definition $\trl_d(f)$), so it follows that the \emph{$d$-ary higher
  commutator} for an algebra is determined by its $d$-ary
polynomials \cite{Moo2024}. The present work will not discuss
  commutator theory in any depth, but to avoid making a false claim we
  mention an important point: there are many potential
  commutators and the above 
  statement is only true for the $d$-ary \emph{hyper}commutator, which
  is defined using $d$-dimensional equivalence relations  (for an
  in-depth analysis of the relationship between the higher
  hypercommutator and the higher \emph{term condition} commutator, we
  refer the reader to~\cite{Moo2021}). Within the theory we develop in
this paper, $d$-dimensional equivalence relations are examples of
$d$-dimensional generalized quasiorders of \emph{compound arity}
$\underbrace{2^1\times \dots \times 2^1}_d$.

After some discussion, we resolved to find a general theory which
incorporated both lines of research, i.e.\ we wanted to answer the
following question:
\begin{itemize}
\item[] \emph{Which relations satisfy $\Xi_d$?}
\end{itemize}
Our answer to this question is complete in the same sense as the
answer given in \cite{JakPR2024} for the $1$-dimensional case is
complete. We define the class of $d$-dimensional quasiorders and show
that each such satisfies the $\Xi_d$ property
(Theorem~\ref{thm:PolofGQuordMain2}). Any other relation $\rho$ which
satisfies $\X_d$ has a positive primitive definition in a special kind
of $d$-dimensional generalized quasiorder, which can always be taken
to be the collection of $d$-ary polymorphisms of $\rho$ considered as
a relation (Proposition~\ref{A3Xi}).

Our investigation of the $\Xi_d$ property relies fundamentally on
generalizations of reflexivity and transitivity to the higher
dimensional setting. We have chosen what seems to be the weakest
possible generalization of each property which still captures
$\Xi_d$. However, when taken together, our higher dimensional
generalizations of reflexivity and transitivity have stronger
consequences and a satisfying perspective emerges, in which a
reflexive and transitive relation $\rho$ and its $d$-ary polymorphisms
can be viewed as similar objects. 

We first consider what we call \emph{(weak) generalized
  quasiorders of simple arity} (Definition~\ref{Adef:Refl+Trans}),
which are relations  
$\rho$ which possess a `rectangular' coordinate system given by the
cube $B^d$ for some nonementy set $B$, i.e.\ that we consider
$\rho \subseteq A^{B^d}$. Actually, it is convenient to treat elements
of such relations as $d$-ary operations with inputs from $B$ whose
outputs are in $A$. This allows us view such $\rho \subseteq A^{B^d}$ and the
$d$-ary operations in $\Pol\rho \subseteq A^{A^d}$ through the same
lens, in particular, it makes as much sense to fix variables of a
function $f \in A^{A^d}$ at constants in $A$ as it does to fix
variables of a function $g \in A^{B^d}$ at variables in $B$. We use
this more general notion of translation to generalize reflexivity and
transitivity to the higher dimensional setting. After some analysis,
it unfolds that a $d$-dimensional (weak) generalized
quasiorder of simple arity is exactly the $d$-ary part of a certain kind of function
\emph{minion} (Proposition~\ref{prop:doublearrowisminion}).

In more detail, the minions that occur in our setting, in addition to
being closed under permutation, identification, and addition of
fictitious variables, are also closed under the action of fixing some
variables at constant values in $B$ (i.e.\ what we call \emph{basic
  translations}). Our definitions of \emph{(weak) reflexivity} and
transitivity can be thought of as local properties whose combination
ensure that a $d$-dimensional $\rho \subseteq A^{B^d}$ extends to a
function minion $\rho^{\leftrightarrow}$ which satisfies the minion
analogue of the property $\Xi_d$ (when stated as a condition on
$\Pol\rho$). We compare these two conditions:
\begin{align*}
  g \in \rho^{\leftrightarrow} &\iff \trl_d(g) \subseteq \rho 
  &&\text{(analogue of $\Xi_d$ for $\rho^{\leftrightarrow}$)} \\
  f \in \Pol\rho &\iff \trl_d(f) \subseteq \Pola[d]\rho
  &&\text{($\Xi_d$ as a condition on $\Pol\rho$)}
\end{align*}
So, the most basic kinds of relations $\rho$ that we found to
satisfy $\Xi_d$ are just fragments of minions which satisfy an
analogous version of the structure which $\Xi_d$ imposes on
$\Pol\rho$. In fact, since we know that $\Pol\rho$ is a minion (it
is already a clone), we find that $\Pola[d]\rho$, i.e. the set of
$d$-ary polymorphisms of $\rho$, is also a $d$-dimensional simple
arity generalized quasiorder whenever $\rho$ satisfies $\Xi_d$
(Proposition~\ref{prop:StarisCloneIffGQuord}). 

For example, if $\rho \subseteq 2^1$ is an ordinary quasiorder
relation on $A$, then $\rho^{\leftrightarrow}$ consists of all
functions $g: 2^n \to A$ all of whose unary basic translations belong
to $\rho$ (similarly, $\Pol\rho$ consists of all operations whose
unary basic translations are unary polymorphisms of
$\rho$). Geometrically, this can be thought of as the collection of 
all hypercubes whose vertices are labeled by $A$, with the property 
that all `oriented lines' within a particular $A$-labeled hypercube
are labeled by pairs from $\rho$. It is easy to check that this particular $\rho^{\leftrightarrow}$ is indeed a minion.

We then extend the class of generalized quasiorders of simple arity to
the generalized quasiorders of  \emph{compound arity}. There are several
ways that one could do this; here we elect to treat a relation 
\[
  \rho \subseteq A^{B_1^{d_1} \times \dots \times B_s^{d_s}}
\]
as a set of multisorted operations which take inputs from the product
of the simple arity sorts $B_1^{d_1}, \dots, B_s^{d_s}$, and then
demand that each of the relations produced from $\rho$ by `currying'
the different simple arities is a weak generalized quasiorder. Hence,
our definition of a generalized quasiorder of compound
arity exactly
mimics the definition of a higher dimensional equivalence relation of
arity $2^1 \times \dots \times 2^1$. One of our main results is that
$\Xi_d$ holds for such $\rho$, where now $d = d_1 + \dots + d_s$ is
the sum of all the constituent simple arities
(Theorem~\ref{thm:PolofGQuordMain2}).

The theory of generalized quasiorders provides a new tool to study
clones of operations with constants. Indeed, if $F$ is a function
clone with constants, then it is closed under the act of substituting
a variable by a constant, which can be viewed as taking lower
dimensional `rectangular slices' of a higher dimensional rectangular
object. The property $\Xi_d$ turns this around: given an operation
clone $F$ (which we view as a collection of labeled higher dimensional
rectangles), when is it the case that \emph{every} higher dimensional
rectangle whose $d$-dimensional slices belong to $F$ also belongs to
$F$? We show that this happens only when the clone fragment $F^{(d)}$
is a generalized quasiorder, and conversely, any generalized
quasiorder which is a clone fragment extends to such a clone
(Proposition~\ref{prop:StarisCloneIffGQuord}). Clone fragments which
are generalized quasiorders can be described as the closed sets of
clone fragments determined by the Galois connection
$\Pola[d]-\dGQuord$, which we spell out in
Section~\ref{sec:PolQuord}. The remainder of the paper is devoted to
examples.

We should make some remarks on the inevitable peculiarities regarding
constants that need to be dealt with in one way or another. There are
three distinct, yet interconnected, roles that constants play in this
article:
\begin{enumerate}[label=\textup{(\alph*)}]
\item Constant operations in an operation clone $F\subseteq \Op(A)$
  for a set $A$.
\item Constants in a set $B$ which are used to fix some variables of a
  function $g: B^n \to A$ at particular values to produce its basic
  translations.
\item Constants which occur in the \emph{image} of a relation $\rho$
  (informally, the collection of all possible outputs of elements of
  $\rho$).
\end{enumerate}
These three distinct occurrences of `constant' have some collisions
within our theory. For example, given a function clone $F$ over $A$,
we can consider (a) from the above list and suppose that $F$ has all
constant operations. Hence, in $F$ we are allowed to evaluate some
variables of a function at a constant operation, but these constant
operations are usually forbidden to be nullary in the standard
development of clone theory. On the other hand, if we allow the kinds
of constants that occur in (b) (with $B = A$), then $F$ will contain
all nullary operations corresponding to the values in the image of
functions in $F$. So, even when $A = B$, the two kinds of constants
from (a) and (b) above behave differently, provided one works with the
standard definition of a clone of operations.
	
There are both practical and aesthetic reasons to allow the nullary
constant substitutions that occur in (b). Indeed, for our treatment it
is important that the arity of a function decreases upon substitution
of a variable for a constant and that these substitutions are distinct
from other ways of manipulating functions (e.g.\ addition of
fictitious variables or identification of variables). If all possible
variables of a function are fixed at constant values, then a nullary
function is obtained, and it is natural to regard collections of
nullary functions as $0$-dimensional weak generalized
quasiorders. Aesthetically, there is no reason to forbid the inclusion
of $0$-dimensional weak generalized quasiorders from our theory, which
are unsurprisingly just subsets of the domain.

Actually, the $0$-dimensional weak generalized quasiorders are
connected to the constants that occur in (c) in the above list: the
image of a weak generalized quasiorder $\rho$ is a $0$-dimensional
weak generalized quasiorder and our definition of \emph{internal
  reflexivity} (Definition~\ref{Adef:Refl+Trans}) only implies that
all constant tuples of elements belonging to this $0$-dimensional
quasiorder belong to $\rho$ (reflexivity further imposes that all
constant tuples are included). Examples of this are already
well-known. The \emph{weak congruences} of an algebra $\mathbb{A}$ are
exactly the congruence relations of subalgebras of $\mathbb{A}$;
similarly, the weak generalized quasiorders over $A$ are just the
generalized quasiorders over subsets of $A$ (and so the invariant weak
generalized quasiorders of an algebra would be the invariant
generalized quasiorders of its subalgebras).

Given a weak generalized quasiorder $\rho$ over $A$ whose image is a
proper subset of $A$, it is straightforward to describe $\Pol\rho$
as a clone over $A$ in terms of $\Pol\rho$ as a clone over the image
of $\rho$: any operation belonging to the latter clone can be
arbitrarily extended to produce an operation in the former and
$\Pol\rho$ as a clone over $A$ consists of all such extensions. So,
if we are ultimately interested in polymorphism clones, it might seem
pointless to make the distinction. There is a good reason to do this
though, since our definition of generalized quasiorders
of compound arity
(Definition~\ref{Adef2:Refl+Trans}) requires the weak versions for
simple arities (Definition~\ref{Adef:Refl+Trans}).

The distinction between the type (a) constants and type (b) constants
above leads to a slight discrepancy in the use of a particular
notation. In Definition~\ref{MstarPoe}, we define for a clone fragment
$M \leq A^{A^d}$ the set of operations $M^{*_d}$, while in
Definition~\ref{def:star} we define for a rectangular relation
$\rho \subseteq A^{B^d}$ the collection of functions $\rho^{*_d}$ in
an essentially identical way as $M^{*_d}$, but now in a more general
setting. The only difference is that $M^{*_d}$ does not have nullary
operations, while $\rho^{*_d}$ does. This is a technical detail which
plays no essential role in our results. Actually, Behrisch develops
the standard Pol-Inv Galois connection for clones with nullary
operations \cite{Beh2014} and shows that this can be achieved with no
essential problems. So, if the reader likes, they are free to think of
the clones in this paper as having nullary operations, in which case
there is no discrepancy with the use of $^*_d$.

We also remark that, while we consider in this paper only clones and
relational clones over finite sets $A$, much of the theory of
generalized quasiorders goes through without any modifications for
infinite $A$. In particular, Theorem~\ref{thm:PolofGQuordMain2} is
true for arbitrary sets $A$. The relaxed assumption on the cardinality
of $A$ does not extend to the sets $B_1, \dots, B_s$, though, which
are necessarily finite since we consider only finitary relations over
$A$. The material in Section~\ref{sec:PolQuord}, on the other hand,
requires that the underlying set $A$ under consideration is
finite. Since we are only mentioning this here in the introduction, we
advise the cautious reader to ignore the possibility that $A$ can be
infinite in some of these results during their first reading.

Here is an outline of the organization of this paper:

In \emph{Section~\ref{sec:prelim}}, basic notation, notions and facts
are collected concerning preclones, clones, minions, and Galois
connections.  
In \emph{Section~\ref{sec:Xi}}
we define translations of operations of arbitrary arity and define our
`motivating' property $\Xi_d$ and its natural generalization $\wXi_{d}$.

In \emph{Section~\ref{sec:Rrelations}} we take some care to
distinguish the several notions of arity at play 
and define the crucial objects of our paper, namely \emph{R-relations of
simple and of compound arity} (Definition~\ref{def:Rrelation}).

In \emph{Section~\ref{sec:trl}}, the notion of \emph{translation} for
elements of any  
R-relation of simple arity is introduced, thus generalizing the
translations defined in Section~\ref{sec:Xi} for operations. Each
translation is a composition of so-called elementary basic or
fictitious translations.

In \emph{Section~\ref{sec:GQuord}} we define internal reflexivity,
reflexivity, and transitivity for R-relations of simple arity, based
on the adjoints of a monotone Galois connection
(Definitions~\ref{def:ostar} and \ref{Adef:Refl+Trans}). 
Naturally, a \emph{(weak) generalized quasiorder of simple arity} is an
R-relation which is (weakly) reflexive and transitive
(Definition~\ref{Adef:Refl+Trans}). Some examples and corresponding
figures demonstrate the introduced properties.
 A sequence of technical lemmas culminates in
Proposition~\ref{prop:doublearrowisminion}, which is our minion
characterization of generalized quasiorders of simple arity. We also
prove Lemma~\ref{lem:PolofGQuord}, which is our main tool for proving
the property $\Xi_d$ for generalized quasiorders.
  
After defining (internal) reflexivity and transitivity for R-relations
of compound arity and thus also \emph{(weak)
generalized quasiorders of compound arity}, in
\emph{Section~\ref{sec:GQuord2}} we prove 
our Main Theorem~\ref{thm:PolofGQuordMain2} which states that $\Xi_d$
holds for \textsl{any} $d$-dimensional generalized quasiorder.

In \emph{Section~\ref{sec:PolQuord}} we consider the Galois connection
  $\Pola[d]-\dGQuord$ relating $d$-ary
polymorphisms to $d$-dimensional generalized quasiorders. We
characterize the Galois closed clone fragments $M$
(Theorem~\ref{thm:dPolQuord} and Corollary~\ref{A3cor}, also via
conditions for $Q$ if $M$ is of the form $M=\Pola[d]Q$).

 In \emph{Section~\ref{sec:GQuord-dimension}} we introduce the
  \emph{generalized quasiorder dimension} of a clone $F$ being the least $d$
  such that $F$ is determined by a $d$-dimensional generalized
  quasiorder (namely by $F^{(d)}$ considered as R-relation).
We show that all Boolean
clones with constants are either $1$- or $2$-dimensional
(Example~\ref{ex:Booleancase}) and show that
for larger finite sets, any dimension can occur
(Example~\ref{ex:3GQuord}). We also give an 
example of a maximal clone which has no non-trivial generalized quasiorders
(and therefore has dimension $\infty$, since it cannot be determined by
a generalized quasiorder, Example~\ref{ex:3noGQuord}).

Finally, in \emph{Section~\ref{sec:simvscom}}, we provide
examples which justify the distinction between 
generalized quasiorders of simple arity and of compound arity.
In the last \emph{Section~\ref{sec:conclusion}}  
we state our conclusions and list some questions and problems for
further research.

    \section{Preliminaries}\label{sec:prelim}

    \begin{notation}\label{notations}
We briefly introduce or recall some notions and notation. $\N:=\{0,1,2,\dots\}$ ($\N_{+}:=\N\setminus\{0\}$)
is the set of (positive) natural numbers. $\Opa[n](A)$ and
$\Rela[m](A)$ denote the sets of $n$-ary operation $f:A^{n}\to A$ and
$m$-ary relations $\rho\subseteq A^{m}$ on some base set $A$ (assumed
to be finite throughout the paper),
respectively.  $\Op(A):=\bigcup_{n\in\N_{+}}\Opa[n](A)$,
$\Rel(A):=\bigcup_{m\in\N_{+}}\Rela[m](A)$. 

For an arbitry set
$\alpha$, $A^{\alpha}$ is the set of all functions $g:\alpha\to
A$. Given a subset $\rho\subseteq A^{\alpha}$ (also called relation), the \Def{image of
  $\rho$} is the union of all images of the elements of $\rho$:
\begin{align*}\label{def:ImageRrelation}\tag*{\ref{notations}(1)}
  \Ima\rho:=\bigcup\{\Ima g\mid g\in\rho\}\subseteq A.
\end{align*}
Clearly, $\rho\subseteq (\Ima\rho)^{\alpha}$, thus $\rho$ can be
considered  also as a relation on the base set $\Ima\rho$ instead of
$A$.

Concerning constants, let  $\cC=\cC_{A}$ be the set of all constant
operations in $\Op(A)$ and $\cC^{(n)}:=\cC\cap\Opa[n](A)$. Unary or $n$-ary
operations constant to $a$ are denoted by $c_{a}$
(i.e. $c_{a}:x\mapsto a$) or $c_{a}^{(n)}$, respectively. For
$K\subseteq A$, let $\cC_{K}$ denote the set of constant operations
which are constant to an element from $K$. 

    \end{notation}

    \begin{definition}\label{M5}

      A set $F \subseteq \Op(A)$ is called a \New{preclone} if it
      contains $\id_{A}$ and is closed under the operations $\zeta$,
      $\tau$ and $\circ$ that are defined as follows. Let
      $f\in\Opa[n](A)$ and $g \in \Opa[m](A)$, $n,m\in\N_{+}$. Then

      \begin{enumerate}[label=\textup{(\arabic*)}]

      \item\label{M5-1} $\id_A(x):=x$ (\New{identity operation});

      \item \label{M5-2}
        $(\zeta f)(x_{0},x_{1},\dots,x_{n-1}) :=
        f(x_{1},\dots,x_{n-1},x_{0})$ (\New{cyclic shift}),\newline if
        $n = 1$ then $\zeta f := f$;

      \item \label{M5-3} $(\tau f)(x_{0},x_{1},x_{2},\dots,x_{n-1})
        := f(x_{1},x_{0},x_{2},\dots,x_{n-1})$\\
        (\New{permuting the first two arguments}), if $n = 1$ then
        $\tau f := f$;

      \item \label{M5-4}
        $ (f\circ g)(x_{0},\dots,x_{m-1},x_{m},\dots,x_{m+n-2})$
        \newline \hspace*{\fill}
        $:=f(g(x_{0},\dots,x_{m-1}),x_{m},\dots,x_{m+n-2})$
        (\New{composition}).
    
      \end{enumerate}

      For later use we introduce here also the operations $\nabla$
      (\New{adding a fictitious argument} at first place) and $\Delta$
      (\New{identification of the first two arguments}):
      \begin{enumerate}[label=\textup{(\arabic*)}]
        \setcounter{enumi}{4}
      \item \label{M5-5}
        $(\nabla f)(x_{0},x_{1},\dots,x_{n})
        :=f(x_{1},\dots,x_{n})$,

      \item\label{M5-6}
        $(\Delta
        f)(x_{0},\dots,x_{n-2}):=f(x_{0},x_{0},\dots,x_{n-2})$ if
        $n\geq 2$, and $\Delta f=f$ for $n=1$.
      \end{enumerate}
    \end{definition}


\begin{remarksnote}
  The $(m+n-1)$-ary function $f \circ g$ (defined in \ref{M5-4})
  sometimes is called \New{linearized composition} (or
  \New{superposition}), because this is a special case of the general
  \emph{linearized composition}, \emph{linearization} or
  \emph{superposition} mentioned in \cite[2.1]{BruDPS93},
  \cite[page~2]{GraW1984} or \cite[Section~2.1]{Leh2010},
  respectively.

\noindent
Preclones, also known as operads, can be thought as ``clones where
identification of variables is not allowed'' (cf.~\ref{M5A}).
The term \emph{preclone} was introduced by \'Esik and Weil
\cite{EsiW2005} in a study of the syntactic properties of recognizable
sets of trees. A general characterization of preclones as Galois
closures via so-called matrix collections can be found in
\cite{Leh2010}.
The notion of \emph{operad} originates from the work in algebraic
topology by May \cite{May1972} and Boardman and Vogt
\cite{BoaV1973}. For general background and basic properties of
operads, we refer the reader to the survey article by Markl
\cite{Mar2008}.
\end{remarksnote}

Clones are special preclones. There are many (equivalent) definitions
of a clone. One of these definitions is that a clone is a set
$F\subseteq\Op(A)$ closed under \ref{M5}\ref{M5-1}-\ref{M5-6},
\cite[1.1.2]{PoeK79}. Therefore we have:

\begin{lemma}\label{M5A}
  A preclone is a clone if and only if it is also closed under
  $\nabla$ (adding ficticious variables) and $\Delta$ (identification
  of variables).
\end{lemma}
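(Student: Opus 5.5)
By the characterization recalled just before the lemma (a clone is a set $F\subseteq\Op(A)$ closed under \ref{M5}\ref{M5-1}--\ref{M5-6}), the statement reduces to comparing closure conditions. The forward direction is immediate. A clone is by that characterization closed under $\nabla$ and $\Delta$. It is also closed under $\zeta,\tau,\circ$ and contains $\id_A$, so it is in particular a preclone. Nothing needs to be checked here beyond reading the definition.

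For the converse, I will let $F$ be a preclone that is also closed under $\nabla$ and $\Delta$. Then $F$ contains $\id_A$ and is closed under $\zeta,\tau,\circ$, because it is a preclone. It is closed under $\nabla$ and $\Delta$ by hypothesis. So $F$ is closed under all of \ref{M5}\ref{M5-1}--\ref{M5-6}, which is exactly the cited definition of a clone from \cite[1.1.2]{PoeK79}.

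The only point needing care is which definition of ``clone'' is in force. If one instead starts from the usual definition (containing all projections and closed under general composition $f(g_1,\dots,g_n)$), then the work lies in the equivalence with the Mal'cev-style operations $\zeta,\tau,\Delta,\nabla,\circ$ and $\id_A$. This equivalence I would take from \cite{PoeK79} rather than reprove. If forced to sketch it, I would argue as follows.

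First, projections $e^n_i$ arise from $\id_A$ by repeated $\nabla$ and cyclic shifts $\zeta$. Second, $\zeta$ and $\tau$ generate all permutations of variables. Third, the general composition $f(g_1,\dots,g_n)$ is obtained in three steps:
\begin{itemize}
\item composing the $g_i$ one at a time into successive argument places via $\circ$ together with permutations;
\item padding each $g_i$ to a common variable set using $\nabla$;
\item identifying the duplicated variables with $\Delta$ after permuting them to the front.
\end{itemize}
This last bookkeeping, making identifications of variables from different $g_i$ land correctly, is the only mildly fiddly step. It is routine, so I would cite it.
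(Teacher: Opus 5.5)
Your proposal is correct and follows the same approach as the paper. The paper gives no separate argument: it takes the characterization from \cite[1.1.2]{PoeK79}, closure under \ref{M5}\ref{M5-1}--\ref{M5-6}, as the definition of a clone, so the lemma follows at once, just as you argue. Your extra sketch of the equivalence with the usual definition goes beyond what the paper does and is fine as an outline.
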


For $F\subseteq \Op(A)$, the clone generated by $F$ is denoted by
$\Sg{F}$ or $\Sg[A]{F}$. The $d$-ary operations $F^{(d)}$ of a clone
$F$ form a so-called clone fragment. More precicely, we define:

\begin{definition}\label{clonefragment}
  A set $M\subseteq \Opa[d](A)=A^{A^{d}}$ is called a \Def{$d$-ary
    clone fragment}, notation $M\leq A^{A^{d}}$, if
  $M=\Sg[A]{M}^{(d)}$. A $d$-ary clone fragment $M$ is called
  \Def{$d$-ary clone fragment with constants}  if it contains all
  ($d$-ary) constant 
  functions ($\cC^{(d)}\subseteq M$), i.e., we have $M=\Sg[A]{M\cup \cC}^{(d)}$.

\end{definition}
\begin{remarknote}
  From the characterization of clones (on finite sets $A$) by
  invariant relations, it follows that $M$ is a $d$-ary clone fragment
  (clone fragment with constants, resp.) if and only if there exists a relation
  $\rho$ on $A$ such that $M=\Pola[d]\rho$ (and $\cC\preserves\rho$,
  resp.).
\end{remarknote}

Clones as well as preclones are special minions which we shall
consider in connection with 
R-relations (cf.\ \ref{prop:doublearrowisminion}). 

\begin{definition}[minion]\label{def:minion}
Let $\Op(B,A)$ denote the set $\bigcup_{n\in\N}A^{B^{n}}$ of all
functions $g:B^{n}\to A$ ($n\in\N$). For $g:B^{n}\to A$ and 
$\pi:n\to m$ ($m\in\N$, $m\in\N_{+}$?) the function $g^{\pi}:B^{m}\to A$ 
given by
\begin{align*}
  g^{\pi}(x_{0},\dots,x_{m-1}):=g(x_{\pi(0)},\dots,x_{\pi(n-1)})
\end{align*}
is called a \emph{minor} of $g$. Note that $g^{\pi}=g\circ\mu_{\pi}$ for
$\mu_{\pi}:B^{m}\to B^{n}: 
(x_{0},,\dots,x_{m-1})\mapsto (x_{\pi(0)},\dots,x_{\pi(n-1)})$. The
$d$-ary part of a minion is denoted by $M^{(d)}:=M\cap A^{B^{d}}$.

A subset $M\subseteq\Op(B,A)$
is called a \emph{(function) minion} if it is closed under taking minors
(equivalently, permutation of variables, identification of variables
and adding fictitious variables analogous to
Definition~\ref{M5}\ref{M5-2},\ref{M5-3},\ref{M5-5},\ref{M5-6}). 

 A \emph{$d$-ary minion fragment} is a
set $\rho\subseteq A^{B^{d}}$ such that there exists a minion
$M\subseteq\Op(B,A)$ with $M^{(d)}=\rho$.

\end{definition}

\begin{Galoisconnections}\label{Galoisconnections}
  We briefly recall the notion of Galois connection. Let $(P,\leq_{P})$
  and $(Q,\leq_{Q})$ be posets (in most cases one takes power sets
  $(\Pow(X),\subseteq)$ and $(\Pow(Y),\subseteq)$ with respect to
  inclusion). A pair $(\phi,\psi)$ of mappings $\phi:P\to Q$ and
  $\psi:Q\to P$ form an \New{(antitone) Galois connection} or a
  \New{monotone Galois connection}, respectively, if 
  \begin{align*}
   p\leq_{P}\psi(q)&\iff \phi(p)\geq_{Q}q
\text{ or }\\
   p\leq_{P}\psi(q)&\iff \phi(p)\leq_{Q}q,
\text{ respectively.}
  \end{align*}

For an (antitone) Galois connection, the mappings $\phi$ and $\psi$
are antitone (order reversing) and are sometimes called
\emph{polarities}, while in the case of a monotone Galois connection,
$\phi$ and $\psi$ are monotone mappings (order homomorphisms) and are
called respectively the \emph{lower adjoint} and \emph{upper adjoint}.  

For both kinds of Galois connections each mapping uniquely determines
the other (provided they exist and form a Galois connection).

We mention two important examples needed later.

\textbf{(1) The Galois connection induced by a binary relation
  $R\subseteq X\times Y$:}

Consider $P=\Pow(X)$ and $Q=\Pow(Y)$ with inclusion as
ordered sets. Then the mappings $\phi:P\to Q$ and $\psi:Q\to P$ given
by 
\begin{align*}
  \phi(X')&:=\{y\in Y\mid \forall x\in X': xRy\} \text{ (for
            $X'\subseteq X$),}\\ 
  \psi(Y')&:=\{x\in X\mid \forall y\in Y': xRy\}
           \text{ (for $Y'\subseteq Y$)}
\end{align*}
 form an (antitone)
Galois connection. A famous Galois connection of this kind
(emphasized as ``\textsl{The most basic Galois connection of algebra}'' in
\cite[p.~147]{McKMT87})
is given by $X=\Op(A)$, $Y=\Rel(A)$ and $R=\preserves$ with the
operators
\begin{align*}
  \Inv F&:=\{\rho\in\Rel(A)\mid \forall f\in F: f\preserves\rho\}
          \text{ (invariant relations of $F$)}\\ 
  \Pol T&:=\{f\in\Op(A)\mid \forall \rho\in T: f\preserves\rho\}
          \text{ (polymorphisms of $T$)}
\end{align*}
for $F\subseteq\Op(A)$ and $T\subseteq\Rel(A)$. 
Here $f\preserves\rho$
($f$ \Def{preserves} $\rho$) means that $f$ maps elements (tuples) from
$\rho$ into $\rho$ (i.e., $f(r_{1},\dots,r_{n})\in\rho$ for
$r_{1},\dots,r_{n}\in\rho$) if applied coordinatewise.

\textbf{(2) The monotone Galois connection induced by a mapping\newline
  \hspace*{4ex}$\lambda:X\to \Pow(Y)$:}

Again we consider the ordered sets $P=\Pow(X)$ and $Q=\Pow(Y)$. 
Then the mappings $\phi:P\to Q$ and $\psi:Q\to P$ given by
\begin{align*}
  \phi(X')&:=\bigcup_{x\in X'} \lambda(x) \text{ (lower adjoint),}\\
  \psi(Y')&:=\{x\in X\mid \lambda(x)\subseteq Y'\} \text{ (upper adjoint)}
\end{align*}
for $X'\subseteq X$ and $Y'\subseteq Y$ form a monotone Galois connection.

\textbf{(3) Properties of a Galois connection (monotone Galois
  connection, respectively):} For $p,p'\in P$ and $q,q'\in Q$ we have
\begin{enumerate}[label=\textup{(\Roman*)}]

\item $p\leq_{P} p'\implies \phi(p)\geq_{Q}\phi(p')$,
  ($\phi(p)\leq_{Q}\phi(p')$, resp.),
\item $q\leq_{Q} q'\implies \psi(q)\geq_{P}\psi(q')$,
  ($\psi(q)\leq_{P}\psi(q')$, resp.),
\item $p\leq_{P}\psi(\phi(p))$,
\item $q\leq_{Q}\phi(\psi(q))$ ($\phi(\psi(q))\leq_{Q}q$, resp.),
\item $\phi(\psi(\phi(p)))=\phi(p)$,
\item $\psi(\phi(\psi(q)))=\psi(q)$,
\item $p\mapsto \psi(\phi(p))$ is a closure operator,
\item $q\mapsto \phi(\psi(q))$ is a closure operator (kernel operator,
  resp.).
\end{enumerate}
  It is known that a pair $(\phi,\psi)$ is a Galois connection (or
  monotone Galois connection, respectively) if and only
  if it satisfies (I)-(IV). Therefore these four properties often are
  used also as definition.

\end{Galoisconnections}

\begin{remark}\label{rem:PolInv}
The Galois closures (with respect to the operators in (VII) and
(VIII)) of the ``most basic'' Galois connection $\Pol-\Inv$ (see
\ref{Galoisconnections}(1) above)  are
well-known and can be
characterized as follows: $\Pol\Inv F=\Sg[A]{F}$ (clone generated by
$F$), $\Inv \Pol Q=[Q]_{\exists,\land,=}$ (relational clone, generated
by $Q$, equivalently characterizable as closure with respect to
primitive positive formulas (pp-formulas), i.e., formulas containing variable and
relational symbols and only $\exists,\land,=$). We refer to, e.g.,
\cite[1.2.1, 1.2.3, 2.1.3(i)]{PoeK79}, \cite{BodKKR69a}, \cite{Poe04a}, \cite{KerPS2014}.

For the restriction to $d$-ary operations, i.e., for the Galois
connection $\Pola[d]-\Inv$ we have:
$\Pola[d]\Inv M=\Sg[A]{M}^{(d)}$
($d$-ary clone fragment generated by $M\subseteq A^{A^{d}}$), $\Inv\Pola[d]
Q=\dLOC[d][Q]_{\exists,\land,=}$ (\New{$d$-local closure} of the relational
clone generated by $Q$). Here the operator $\dLOC[d]$  for
$Q\subseteq\Rel(A)$ is defined as follows:
\begin{align*}
  \dLOC Q:=\{\rho\in\Rel(A)\mid \forall\, B\subseteq\rho,\; |B|\leq
  d\;\exists\, \sigma\in Q: B\subseteq\sigma\subseteq\rho\}.
\end{align*}
It is equivalently definable by the closure under so-called ``$d$-directed
unions'' (generalizing disjunction in the $1$-dimensional case $d=1$).
We refer to, e.g.,
\cite[1.9, 4.2(b), 1.13(ii)]{Poe80}.
\end{remark}

\section{Translations and the properties $\Xi_{d}$ and $\wXi_{d}$}\label{sec:Xi}
      
In this section we first introduce a natural generalization of the classical
notion of a \emph{translation} of an $n$-ary operation $f: A^{n}\to A$
for some nonempty set $A$, which is a unary polynomial function
obtained from $f$ by evaluating all but one of its arguments at a
constant.
Namely, we do not
necessarily only consider unary polynomial functions obtained from
$f$, but also binary, ternary, and so on.
We then define the property $\Xi_{d}$, which is the foundation for the
central motivating question of this paper, and consider some questions
connected with this property.

\begin{definition}[translations of operations]\label{def:trlA}
  Let $f:A^{n}\to A$ be an $n$-ary operation and $d\in \N$.
An operation $g:A^{d}\to A$ is called a \Def{$d$-ary translation} or
\Def{$d$-translation} of $f$, if $g$ can be obtained from $f$
by substituting some of its variables for constants and then prepending and
  appending some fictitious variables. 
Operations $g$ obtained from $f$ by applying only one of these steps
(respectively called \Def{basic} and \Def{fictitious}
$d$-translations) are defined as follows:
\begin{align*}\label{trlA-1}\tag*{\ref{def:trlA}(1)}
    g(x_{0},\dots,x_{d-1}):= &f(\underbracket{\dots}_{c\in A},
       x_{0},\underbracket{\dots}_{c\in A},
        x_{1},\underbracket{\dots}_{c\in A},
        \dots,x_{d-1},\underbracket{\dots}_{c\in A})
     &\text{ if $n\geq d$,}\\
\label{trlA-2}\tag*{\ref{def:trlA}(2)}
     g(x_0, \dots, x_{d-1}) := &f(x_{u},\dots,
                              x_{u+n-1}) \text{ ($u\in\{0,\dots,d-n\}$)}
                      &\text{ if $n\leq d$.}
\end{align*}
 
Note that for both basic and fictituous translations, if $d=n$ we get $g=f$. 
Let $\trl_{d}(f)$ ($\btrl_{d}(f)$ and $\ftrl_{d}(f)$, respectively) denote the
set of all $d$-translations (basic and fictitious, respectively) of $f$.
Thus $\trl_{d}(f)=\ftrl_{d}(\btrl(f))$ by definition. We put
$\btrl_{d}(f)=\emptyset$ for $n<d$, and $\ftrl_{d}(f)=\emptyset$ for $n>d$.
\end{definition}

\begin{remarks}\label{rem:trlA} (a) For all what follows in this section it makes no
  difference if one considers $\trl_{d}(f)$ in full generality or only
  the above special $d$-translations \ref{trlA-1} or
  \ref{trlA-2}. Therefore the reader
  may think of a $d$-translation (in $\trl_{d}(f)$) as one of \ref{trlA-1} or \ref{trlA-2} (definitions and results are independent of this
  point of view, see, e.g., the next Lemma~\ref{lem:trlApreserves}).
 Later, in a more general context, we shall
see that $\trl_{d}(f)$ is closed under
substituting constants for variables and prepending or appending
fictitious variables in arbitrary order (and thus coincides with the
Definition of $\trl_{d}(f)$ in \ref{def:trl1}, see \ref{lem:trldecomposition}\ref{trldecomposition-iii}).

(b) \ref{trlA-1} is a  \Def{basic $d$-translation} obtained from $f$ by substituting constants on
  $n-d$ places and filling up the (non-substituted) places by the
  variables $x_{0},x_{1},\dots,x_{d-1}$, and \ref{trlA-2} is a
  \Def{fictitious $d$-translation} obtained from $f$ by 
prepending $u$ and appending
  $d-n-u$ many fictitious variables.
%

  (c) For $d=1$ in \ref{trlA-1} our notion of basic translation
  corresponds to the notion of \Def{translation} in
  \cite[Definition~1.4.7]{Ihr2003}, or \Def{basic translation} in
  \cite{Mal1963}, or \Def{$1$-translation} in \cite[p.~375]{Gra2008}
  However, Gr{\"a}tzer's $d$-translations differs from our definition
  for $d>1$. Notice that creating the distinction between a basic
  $d$-translation and a fictitious $d$-translation was not necessary
  in the classical setting $d=1$, since $\trl_1(f)$ only enlarges
  $\btrl_1(f)$ by those constant unary functions which take a value in
  the range of $f$. For larger values of $d$, we need to consider the
  basic translations obtained from $f$ which fix all but $t$ many
  variables at a constant value, for any $0 \leq t \leq d$, and then
  adjust the arities with fictitious translations.
  
\end{remarks}

\begin{lemma}\label{lem:trlApreserves}
  Let $\rho\in\Rel(A)$ and $f:A^{n}\to A$ and assume
$\cC\preserves\rho$. Then
\begin{align*}
  \trl_{d}(f)\preserves\rho\iff \btrl_{d}(f)\cup \ftrl_{d}(f)\preserves\rho
   &\iff
  \begin{cases}
    \btrl_{d}(f)\preserves\rho &\text{if } d\leq n,\\
    \ftrl_{d}(f)\preserves\rho &\text{if } d\geq n.
  \end{cases}
\end{align*}

\end{lemma}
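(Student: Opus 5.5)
The plan is to reduce everything to two elementary facts about preservation. Both use the hypothesis $\cC\preserves\rho$, which (for $\rho\neq\emptyset$; if $\rho=\emptyset$ every operation preserves $\rho$ and all statements are trivial) says exactly that every constant tuple $(a,\dots,a)$ lies in $\rho$.

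\emph{Fact 1 (fictitious variables).} If $g$ is obtained from $h$ by prepending and appending fictitious variables, then $g\preserves\rho\iff h\preserves\rho$. Applying $g$ to tuples $r_0,\dots,r_{d-1}\in\rho$ just applies $h$ to a consecutive subfamily of them, which gives $\Leftarrow$. Conversely, any family for $h$ can be padded with arbitrary elements of $\rho$ at the fictitious places, which gives $\Rightarrow$.

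\emph{Fact 2 (constants).} If $h$ is obtained from $k$ by substituting constants for some variables and $k\preserves\rho$, then $h\preserves\rho$. Evaluating $h$ on tuples from $\rho$ is the same as evaluating $k$ with the constant tuples $(c,\dots,c)\in\rho$ inserted at the substituted places.

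Next I use the decomposition from Definition~\ref{def:trlA}. Every $g\in\trl_d(f)$ has the form $g\in\ftrl_d(h)$ for some basic $t$-translation $h\in\btrl_t(f)$ with $t\le\min(n,d)$. The first equivalence then reduces to showing that the right-hand case condition implies $\trl_d(f)\preserves\rho$; the other implications are trivial, since $\btrl_d(f)\cup\ftrl_d(f)\subseteq\trl_d(f)$ and the appropriate one of $\btrl_d(f)$, $\ftrl_d(f)$ is contained in the union (the other being empty when $d\neq n$, and $\btrl_n(f)=\ftrl_n(f)=\{f\}$ when $d=n$).

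\emph{Case $d\le n$.} Assume $\btrl_d(f)\preserves\rho$, and let $g\in\ftrl_d(h)$ with $h\in\btrl_t(f)$, $t\le d$. Here $h$ fixes $n-t\ge d-t$ variables of $f$ at constants. I free $d-t$ of these fixed positions, keeping the free positions in their original order, and obtain a basic $d$-translation $k\in\btrl_d(f)$. Then $h$ arises from $k$ by substituting constants for those $d-t$ variables. By hypothesis $k\preserves\rho$, so $h\preserves\rho$ by Fact~2, and $g\preserves\rho$ by Fact~1.

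\emph{Case $d\ge n$.} Assume $\ftrl_d(f)\preserves\rho$. By Fact~1, $f\preserves\rho$. Each $h\in\btrl_t(f)$ is obtained from $f$ by substituting constants, so $h\preserves\rho$ by Fact~2, and then every $g\in\ftrl_d(h)$ preserves $\rho$ by Fact~1.

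The only delicate point is the bookkeeping in the case $d\le n$. One must check that a fictitious extension of a lower-arity basic translation is controlled by a genuine basic $d$-translation; this is where $n\ge d$ is needed, since it guarantees enough fixed positions to free. It is also the point where the hypothesis $\cC\preserves\rho$ is essential.
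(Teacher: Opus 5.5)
Your proof is correct and follows the same route as the paper. You use the same case split: when $d\geq n$ you reduce to $f\preserves\rho$, and when $d\leq n$ you obtain each $h\in\btrl_t(f)$ with $t\leq d$ from some $k\in\btrl_d(f)$ by substituting further constants.

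The only difference is presentational. The paper expresses your Facts~1 and~2 as the inclusions $\trl_d(f)\subseteq\Sg{\{f\}\cup\cC}$ and $\btrl_t(f)\subseteq\Sg{\btrl_d(f)\cup\cC}$, together with the fact that $\Pol\rho$ is a clone. You instead verify the two needed closure properties directly, which also handles the nullary case $t=0$ and the empty relation more transparently.
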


\begin{proof}
  The second implication is clear in both directions by
  definition of  $\btrl$ and $\ftrl$. The first implication
  ``$\Longrightarrow$'' is trivial. To show ``$\Longleftarrow$'', we
  distinguish two cases:

Case $d\geq n$: Then $\emptyset\neq\ftrl_{d}(f)$ and we have
$\Sg{\{f\}}=\Sg{\ftrl(f)}\preserves\rho$ because
two functions which are equal up to fictitious 
  variables generate the same clone and therefore preserve the same
  relations. Thus $f\preserves\rho$ which implies
  $\trl_{d}(f)\preserves\rho$ since $\trl_{d}(f)\subseteq
  \Sg{\{f\}\cup \cC}$ (by definition) and $\cC\preserves\rho$.

  Case $d\leq n$: Let $g\in\trl_{d}(f)$. We have to show
  $g\preserves\rho$. By definition, there exists $t\in\{0,1,\dots,d\}$
  such that $g\in\ftrl_{d}(\btrl_{t}(f))$ (i.e.\ $g$ is obtained from
  $f$ by first fixing at least $n-t$ of its variables at constants and
  then enlarging the arity with initial and terminal fictitious
  variable sequences). Since
  $\btrl_t(f)\subseteq\Sg{\btrl_d(f)\cup \cC}$ (as $t\leq d$)
  and $\btrl_{d}(f)\preserves\rho$ by assumption, we get
  $\btrl_{t}(f)\preserves\rho$ and therefore also
  $g\in\ftrl_{d}(\btrl_{t}(f))\preserves\rho$, which is what was
  to be shown.
\end{proof}

Having defined a suitable generalization of a translation, we can now
define the central property $\Xi_{d}$ which we investigate in this
paper. But we need before another definition, which shall be discussed
in more generality later, cf.\ Definition \ref{def:star}.

\begin{definition}\label{MstarPoe}
  For a $d$-ary clone fragment $M\leq A^{A^{d}}$
  (cf.~\ref{clonefragment}) with
  $\cC^{(d)}\subseteq M$
  we define
  \begin{align}\label{MstarPoe1}\tag*{\ref{MstarPoe}(1)}
    M^{*_d}:=\{f\in \Op(A)\mid \trl_{d}(f)\subseteq M\}.
  \end{align}
\end{definition}

\begin{remarks}\label{rem:MstarPoe}
\leavevmode

(A) According to \ref{Galoisconnections}(2),
  ${}^{*_{d}}:\Pow(\Opa[d](A))\to\Pow(\Op(A)):M\mapsto M^{*_{d}}$ is the
upper adjoint of a monotone Galois connection with
lower adjoint
$\trl_{d}:\Pow(\Op(A))\to\Pow(\Opa[d](A))$, induced by the mapping
$\lambda:f\mapsto \trl_{d}(f)$. For a more general context see
  Definition~\ref{def:star} (for R-relations).

Since the upper adjoint of a monotone Galois connection
 is always meet-preserving, we have:
   $(\bigcap_{i\in I} M_{i})^{*_{d}}=\bigcap_{i\in I}M_{i}^{*_{d}}$
for clone fragments $M_{i}\leq A^{A^{d}}$, $i\in
I$. This also can be seen directly using the definitions:
\begin{align*}
  f\in(\textstyle\bigcap_{i\in I} M_{i})^{*_{d}}
&\iff \trl_{d}(f)\subseteq\textstyle\bigcap_{i\in I}M_{i}
\iff \forall i\in I: \trl_{d}(f)\subseteq M_{i}\\
&\iff \forall i\in I: f\in M_{i}^{*_{d}}
\iff f\in \textstyle\bigcap_{i\in I}M_{i}^{*_{d}}. \tag*{\qed}
\end{align*}

(B) For the first $(d+1)$-ary projection $e_{0}^{(d+1)}$ we have
$\trl_{d}(e_{0}^{(d+1)})=\{e_{0}^{(d)}\}\cup \cC^{(d)}$. Therefore,
for $M\subseteq\Opa[d](A)$ with $e_{0}^{(d)}\in M$ (e.g., for a clone
fragment), we
have $e_{0}^{(d+1)}\in M^{*_{d}}$ if and only if $\cC^{(d)}\subseteq M$.
\end{remarks}

\begin{definition}[\textbf{The property
    $\boldsymbol{\Xi_{d}}$}]\label{def:Xi}
  For a relation $\rho\in\Rel(A)$ we consider the following property
  $\Xi_{d}$ in three equivalent formulations:
  \begin{align}\label{Xi1}\tag*{(*)}
    \Xi_{d}(\rho):&\iff\quad \forall f\in\Op(A):\;
                    f\preserves\rho\iff
                    \trl_{d}{f}\preserves\rho\\\label{Xi2}\tag*{(**)} 
                  &\iff \quad \forall f\in\Op(A):\;
                    f\in\Pol\rho\iff \trl_{d}{f}\subseteq\Pola[d]\rho\\\label{Xi3}\tag*{(***)}
                  &\iff \quad 
                    \Pol\rho=(\Pola[d]\rho)^{*_d}.
  \end{align}
  This can be extended to sets $Q\subseteq\Rel(A)$ just by
  substituting $Q$ for $\rho$ in the above definition, e.g.,
  $\Xi_{d}(Q)\iff\Pol Q=(\Pola[d] Q)^{*_d}$.
\end{definition}

\begin{remarksnote} 
Because of Lemma~\ref{lem:trlApreserves}
the condition $\trl_{d}(f)\preserves\rho$ in \ref{Xi1} can
be reduced to basic or fictitious translations.

  As already mentioned in the introduction, the property $\Xi_{d}$
  (generalizing the property $\Xi_{1}$ for congruence relations) leads
  to the central motivating question: 
\begin{center}
\emph{Which relations do satisfy
    $\Xi_{d}$?}
\end{center}
We observe two necessary conditions. The implication
``$\Longrightarrow$" in \ref{Xi1} leads to the requirement that all
constants should preserve $\rho$. In fact, since
$e_{0}^{(d+1)}\preserves\rho$ is trivially satisfied, \ref{Xi1}
gives $\cC^{(d)}\subseteq\trl_{d}(e_{0}^{(d+1)})\preserves\rho$
(cf.\ \ref{rem:MstarPoe}(B)).
Thus we also can conclude $\Ima\rho=A$ (this is actually a special case of Lemma~\ref{lem:weakimage}). 
Further, \ref{Xi3} implies 
that $M^{*_{d}}$ is a clone for $M=\Pola[d]\rho$ whenever
$\Xi_{d}(\rho)$, in particular $M=(M^{*_{d}})^{(d)}$ must be a clone
fragment. This also motivates why we restricted to clone fragments $M$
containing all constants in Definition~\ref{MstarPoe}.
In general, a clone fragment $M\leq\Opa[d](A)$ is a clone fragment
with constants if and only if $M^{*_{d}}$ contains the
  projection $e^{d+1}_{0}$.
\end{remarksnote}

Nevertheless we can relax the condition $\cC\preserves\rho$ by
considering only a subset of constants which leads to the property
$\wXi_{d}$ below.

\begin{definition}\label{def:weakXi}
Let $K\subseteq A$. If in \ref{trlA-1} only constants from $K$ are
allowed to be substituted, then we call
\begin{align*}\label{weakXi-1}\tag*{\ref{def:weakXi}(1)}
    g(x_{0},\dots,x_{d-1}):= &f(\underbracket{\dots}_{c\in K},
       x_{0},\underbracket{\dots}_{c\in K},
        x_{1},\underbracket{\dots}_{c\in K},
        \dots,x_{d-1},\underbracket{\dots}_{c\in K})
     &\text{ if $n\geq d$}
\end{align*}
a \New{basic $d$-translation of $f$ over $K$}. Likewise all notions
defined in \ref{def:trlA} can be considered ``over $K$''. We shall use
the notation $\btrl^{K}_{d}(f)$ and $\trl^{K}_{d}(f)$
for (basic)
$d$-translations of $f$ over $K$ ($\ftrl_{d}$ remains
untouched). Definition~\ref{MstarPoe} then modifies to 
  \begin{align}\label{weakXi-2}\tag*{\ref{def:weakXi}(2)}
    M^{*^{K}_d}:=\{f\in \Op(A)\mid \trl^{K}_{d}(f)\subseteq M\}
  \end{align}
for a $d$-ary clone fragment $M$ with $\cC_{K}^{(d)}\subseteq M$
($*^{K}_{d}$ being the upper adjoint to $\trl^{K}_{d}$).
The property $\Xi_{d}$ (cf.~\ref{def:Xi}) shall be modified only for a special $K$,
namely $K=\Ima\rho$ (cf.~\ref{notations}), called ``weak $\Xi$'':
  \begin{align}\label{weakXi-3}\tag*{\ref{def:weakXi}(3)}
    \wXi_{d}(\rho):&\iff \left[\forall f\in\Op(A):\;
                    f\preserves\rho\iff
                    \trl^{\Ima\rho}_{d}{f}\preserves\rho\right]\\
\label{weakXi-4}\tag*{\ref{def:weakXi}(4)}
      &\iff \Pol\rho=(\Pola[d]\rho)^{*^{\Ima\rho}_{d}}
  \end{align}
The ``w'' in $\wXi_{d}$ stands for ``\textbf{w}eak'' because the condition
$ \trl^{\Ima\rho}_{d}{f}\preserves\rho$ is weaker than  
$\trl_{d}{f}\preserves\rho$.
\end{definition}

\begin{remarknote}
  Clearly, \ref{weakXi-3} and \ref{weakXi-4} correspond to
  \ref{def:Xi}\ref{Xi1} and \ref{Xi2}. Since
  $\trl_{d}(f)=\trl_{d}^{A}(f)$, we have
  $\Xi_{d}(\rho)\iff \wXi_{d}(\rho)\land\Ima\rho=A$. As we shall see
  in Proposition~\ref{M6}, $M^{*_{d}}$ is always a preclone. For
  $M^{*^{K}_{d}}$ this is not true in general but it remains true if
  $K$ is a subalgebra of $(A,\Pol\rho)$ (although we prove it only for
  $K=A$ in Proposition~\ref{M6}). The next
lemma shows that $\Ima\rho$ is always a subalgebra if $\wXi_{d}(\rho)$.
\end{remarknote}

\begin{lemma}\label{lem:weakimage}
  $\wXi_{d}(\rho)$ implies that $\Ima\rho$ is the least (nontrivial)
  subalgebra of 
  $(A,\Pol\rho)$ and $c_{a}\in\Pol\rho$ for all $a\in\Ima\rho$, i.e., $\cC_{\Ima\rho}\subseteq\Pol\rho$.
\end{lemma}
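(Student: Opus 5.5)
We work with the characterization \ref{weakXi-3}, applying it to carefully chosen operations $f$ whose translations over $K:=\Ima\rho$ are easy to control. The statement has three parts: $\cC_K\subseteq\Pol\rho$; $K$ is a subuniverse of $(A,\Pol\rho)$; and $K$ is contained in every nonempty subuniverse. We prove them in this order.

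\emph{Constants.} Fix $a\in K$ and consider the $(d+1)$-ary projection $e_0^{(d+1)}$, which trivially preserves $\rho$. By \ref{weakXi-3}, direction ``$\Longrightarrow$'', $\trl^{K}_d(e_0^{(d+1)})\preserves\rho$. Substituting the constant $a\in K$ for the first variable gives the basic $d$-translation $c_a^{(d)}$. Hence $c_a^{(d)}\preserves\rho$, and so $c_a\preserves\rho$, since operations differing only by fictitious variables preserve the same relations. This is the same argument as in Remark~\ref{rem:MstarPoe}(B), now restricted to constants from $K$.

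\emph{$K$ is a subuniverse.} Let $f\in\Pol\rho$ be $n$-ary and let $a_1,\dots,a_n\in K$; we must show $f(a_1,\dots,a_n)\in K$. Set $b:=f(a_1,\dots,a_n)$. Since $a_i\in K$, for each $i$ some $r_i\in\rho$ takes the value $a_i$. The tuple $f(r_1,\dots,r_n)$ lies in $\rho$, but its coordinates are taken at different positions, so we cannot read off $b$ directly. Instead we use the constants just obtained: each $c_{a_i}\in\Pol\rho$, so $r_i':=c_{a_i}(r_i)$ is the constant tuple with value $a_i$, and it lies in $\rho$. Then $f(r_1',\dots,r_n')$ is the constant tuple with value $b$, it lies in $\rho$, and therefore $b\in\Ima\rho=K$. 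We assume $\rho$ has positive arity and is nonempty; if $\rho=\emptyset$ then $K=\emptyset$, and the claim is degenerate, which is presumably what ``nontrivial'' is meant to exclude.

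\emph{Minimality.} Let $S$ be a nonempty subuniverse of $(A,\Pol\rho)$. Pick $s\in S$ and $a\in K$. The constant $c_a$ is a unary member of $\Pol\rho$, so $a=c_a(s)\in S$. Hence $K\subseteq S$, and $K$ is the least nonempty subalgebra.

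The main obstacle is the first step: checking that \ref{weakXi-3} really does let us substitute exactly the constants in $K$, producing the constant $c_a^{(d)}$ as a member of $\trl^K_d(e_0^{(d+1)})$, and then passing from this $d$-ary constant to the unary $c_a$. Everything after that is routine.
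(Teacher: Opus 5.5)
Your proof is correct and is essentially the paper's argument: the key step (obtaining $c_a\in\Pol\rho$ for $a\in\Ima\rho$ from the basic translation $e_0^{(d+1)}(a,x_1,\dots,x_d)$ via the ``$\Longrightarrow$'' direction of $\wXi_d(\rho)$) and the minimality argument via $c_a(s)=a$ are the same. The only difference is in how closure is checked. The paper introduces $K=\{a\in A\mid c_a\in\Pol\rho\}$, gets closure from $c_{f(a_1,\dots,a_n)}=f(c_{a_1},\dots,c_{a_n})$, and then shows $K=\Ima\rho$, whereas you check closure of $\Ima\rho$ directly on constant tuples in $\rho$; your nonemptiness caveat applies equally to the paper's step $c_a\preserves\rho\Rightarrow(a,\dots,a)\in\rho$.
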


\begin{proof}
  Let $K:=\{a\in A\mid c_{a}\in\Pol\rho\}$. It is
  clear that $K$ is a subalgebra
  (since $c_{f(a_{1},\dots,a_{n})}=f(c_{a_{1}},\dots,c_{a_{n}})\in\Pol\rho$ for
   $c_{a_{1}},\dots,c_{a_{n}},f\in\Pol\rho$) and it is contained in
   each subalgebra $B$ of $(A,\Pol\rho)$ (since, for $b\in B$,
   $c_{a}(b)=a$ implies $a\in B$ for each $a\in K$). We are going to
   prove $K=\Ima\rho$, which will finish the proof. In fact, $a\in K$,
   i.e., $c_{a}\preserves\rho$, implies $(a,\dots,a)\in\rho$ and
   therefore $a\in\Ima\rho$. Conversely, let $a\in\Ima\rho$.
  The following
  argument is a generalization of the argument given in the
  remarks after
  Definition~\ref{def:Xi} for the case when $\Ima\rho = A$.
The first $(d+1)$-ary projection $e_0^{d+1}$ is a
  polymorphism of every relation, so in particular
  $e_0^{d+1} \preserves \rho$. By ``$\implies$'' of~\ref{weakXi-3}, it
  follows that $e_0^{d+1}(a,x_1, \dots, x_d) = c_a(x_1, \dots, x_d)$
  also preserves
  $\rho$ (since it belongs to $\trl_{d}^{\Ima\rho}e_{0}^{d+1}$). Hence, $c_{a}\preserves\rho$, i.e., $a\in K$. 
\end{proof}

The condition
$\trl^{\Ima\rho}_{d}(f)\preserves\rho$ in $\wXi_{d}(\rho)$ can be simplified analogously
to  Lemma~\ref{lem:trlApreserves} as follows.

\begin{lemma}\label{lem:weaktrlApreserves}
  Let $\rho\in\Rel(A)$ and $f:A^{n}\to A$ and assume
$\cC_{\Ima\rho}\preserves\rho$. Then
\begin{align*}
  \trl^{\Ima\rho}_{d}(f)\preserves\rho&{\iff} 
\btrl^{\Ima\rho}_{d}(f)\cup \ftrl_{d}(f)\preserves\rho
   {\iff}
  \begin{cases}
    \btrl^{\Ima\rho}_{d}(f)\preserves\rho &\text{if } d\leq n,\\
    \ftrl_{d}(f)\preserves\rho &\text{if } d\geq n.
  \end{cases}
\end{align*}

\end{lemma}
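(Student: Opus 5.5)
The plan is to copy the proof of Lemma~\ref{lem:trlApreserves}, with $A$ replaced by $K:=\Ima\rho$ throughout. The hypothesis $\cC_{\Ima\rho}\preserves\rho$ takes over the role of $\cC\preserves\rho$.

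\emph{The second equivalence.} This holds by definition. We have $\btrl^{K}_{d}(f)=\emptyset$ for $n<d$ and $\ftrl_{d}(f)=\emptyset$ for $n>d$. For $n=d$ both sets equal $\{f\}$.

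\emph{The first equivalence.} The direction ``$\Longrightarrow$'' is trivial, since $\btrl^{K}_{d}(f)\cup\ftrl_{d}(f)\subseteq\trl^{K}_{d}(f)$. For ``$\Longleftarrow$'' I would split into two cases.

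\emph{Case $d\geq n$.} Here $\ftrl_{d}(f)\neq\emptyset$. An operation and its fictitious translations generate the same clone, so $f\preserves\rho$. Every element of $\trl^{K}_{d}(f)$ arises from $f$ by substituting unary constants $c_a$ with $a\in K$ and then adding fictitious variables. It therefore lies in $\Sg{\{f\}\cup\cC_{K}}$, which preserves $\rho$ because $\cC_{K}\preserves\rho$.

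\emph{Case $d\leq n$.} Take $g\in\trl^{K}_{d}(f)$, so $g\in\ftrl_{d}(\btrl^{K}_{t}(f))$ for some $t\leq d$. Each $h\in\btrl^{K}_{t}(f)$ comes from $f$ by fixing a further $d-t$ of the free variables at constants from $K$. Hence $h$ is obtained from some $h'\in\btrl^{K}_{d}(f)$ by substituting constants $c_a$ with $a\in K$ into $d-t$ of its arguments. Thus $h\in\Sg{\btrl^{K}_{d}(f)\cup\cC_{K}}$, and so $h\preserves\rho$. Adding fictitious variables to $h$ keeps this property, so $g\preserves\rho$.

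\emph{Main obstacle.} The only point needing care is the degenerate case $t=0$, where $h$ is nullary. There one argues directly: $g$ is constant with value $a\in K$, and $c_a\preserves\rho$ by hypothesis. Everything else is routine bookkeeping.
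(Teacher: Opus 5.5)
Your route is the paper's: the paper proves this lemma by observing that the proof of Lemma~\ref{lem:trlApreserves} never introduces new constants, so $\btrl$ and $\trl$ can be replaced throughout by $\btrl^{K}$ and $\trl^{K}$. Your case split reproduces that argument, and the second equivalence, the trivial direction, and the case $d\geq n$ are all fine.

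The one place where you deviate from it is also the one place where the reasoning fails as written. In the degenerate case $t=0$ of the case $d\leq n$, you assert that $g$ is constant with value in $K$, and that this constant preserves $\rho$ ``by hypothesis''. But the value of $g$ is $b=f(a_1,\dots,a_n)$ with $a_i\in K$, and the hypothesis $\cC_{\Ima\rho}\preserves\rho$ says nothing about whether $b\in K$. In this direction you do not yet know that $f$ preserves $\rho$, or even that $f$ maps $K^n$ into $K$. In Lemma~\ref{lem:trlApreserves} this shortcut would be harmless, because there every constant preserves $\rho$. Here it is exactly the spot where the restriction to $K=\Ima\rho$ matters.

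The repair uses the assumption $\btrl^{K}_{d}(f)\preserves\rho$. Obtain $h'\in\btrl^{K}_{d}(f)$ from $f$ by fixing only $n-d$ of the $n$ positions at their values $a_i$, leaving positions $i_1,\dots,i_d$ free. Then $c_b(x)=h'(c_{a_{i_1}}(x),\dots,c_{a_{i_d}}(x))$ lies in $\Sg{\btrl^{K}_{d}(f)\cup\cC_{K}}$. Hence $c_b\preserves\rho$, and so $g\preserves\rho$. For $\rho\neq\emptyset$ this also shows a posteriori that $b\in K$.

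So your general clone-generation step already covers $t=0$ once you replace the nullary $h$ by the unary constant $c_b$. This is how the paper's argument, inherited from Lemma~\ref{lem:trlApreserves}, implicitly treats it, and no separate ``direct'' argument is needed.
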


\begin{proof} Looking at the proof of Lemma~\ref{lem:trlApreserves},
  it turns out that each constant which 
  appears in the proof never changes (no new constants come into play)
  therefore each $\btrl$ and $\trl$ can be replaced by 
  $\btrl^{K}$ and $\trl^{K}$ for an arbitrary $K\subseteq A$.
\end{proof}

The following example is a generalization of
  \cite[Example~2.4]{JakPR2023a} and shows that $M^{*_{d}}$ is not a
  clone in general (for $M$ being a clone fragment containing all
  constants).
 
\begin{example}\label{ex:notclone}   Let $A=\{0,1,2\}$ and $M:=\Pola[d]\rho$ ($d\in\N_{+}$)
  for the binary relation
  $\rho=\{(a,a')\in A^{2}\mid a\leq a'\leq a+1\}=
  \big(\begin{smallmatrix} 0&1&2&0&1\\0&1&2&1&2
  \end{smallmatrix}\big)$.
  Define $f:A^{d+1}\to A$ as follows:
  \begin{align*}
    f(x_{1},\dots,x_{d+1}):=
    \begin{cases}
      0&\text{if }x_{1}+\ldots+x_{d+1}\leq d+1,\\
      2&\text{if } x_{1}=\ldots=x_{d+1}=2, \text{ i.e., }\sum_{i=1}^{d+1} x_{i}=2d+2,\\
      1&\text{otherwise.}
    \end{cases}
  \end{align*}

  Then each basic $d$-translation $g\in\btrl_{d}(f)$, cf.\ \ref{trlA-1}, preserves
  $\rho$, what we are going to show first.  

  By definition, permuting the arguments in $f$ does not change the
  function value. Therefore we can assume without loss of generality,
  that a basic $d$-translation $g$ of $f$ has the form
  $g(x_{1},\dots,x_{d})=f(x_{1},\dots,x_{d},c)$ for some constant
  $c\in A$. Let $r_{1},\dots,r_{d}\in\rho$ ($r_{i}=(a_{i},a'_{i})$ for
  $1\leq i\leq d$) and $\bc:=(c,c)$. Then
  $a_{1}+\ldots+a_{d}+c\leq a'_{1}+\ldots+a'_{d}+c\leq
  a_{1}+\ldots+a_{d}+c+d$ (note $a_{i}\leq a_{i}'\leq a_{i}+1$ by
  definition of $\rho$). Therefore, by definition of $f$,
  $f(a_{1},\dots,a_{d},c)\leq f(a'_{1},\dots,a'_{d},c)\leq
  f(a_{1},\dots,a_{d},c)+1$ which gives
  \begin{align*}
    f(r_{1},\dots,r_{d},\bc)=
    (f(a_{1},\dots,a_{d},c),f(a'_{1},\dots,a'_{d},c))\in\rho.
  \end{align*}
  Thus $g\preserves\rho$. Because of
  Lemma~\ref{lem:trlApreserves} this implies that each $d$-translation of $f$
  preserves $\rho$, i.e.,  $\trl_{d}(f)\subseteq M$. Consequently we
  have $f\in M^{*_{d}}$. 
  But $g:A\to A$ given by $g(x)=f(x,\dots,x)$ (belonging to the clone
  generated by $f$) does not preserve $\rho$ since
  $g(r)=f(r,\dots,r)=(0,2)\notin\rho$ for $r=(1,2)\in\rho$. Thus
  $f\in M^{*_d}$ but $g\notin M^{*_d}$. Hence $M^{*_d}$ is not a clone
  (and therefore $\rho$ does not satisfy $\Xi_{d}$). Since $d$ was
  chosen arbitrarily, this holds for every $d\in \N_{+}$.

\end{example}

Since $M^{*_d}$ is not always a clone, there also arises the question:
what is the algebraic nature of the sets $M^{*_d}$? The following proposition gives the answer.

\begin{proposition}\label{M6}
  Let $M\leq \Opa[d](A)$ be a clone fragment and $\cC^{(d)}\subseteq M$. Then $M^{*_d}$ is a preclone
  (cf.~{\rm\ref{M5}}).
\end{proposition}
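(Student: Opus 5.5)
We check the four closure requirements of Definition~\ref{M5} one at a time: $\id_A\in M^{*_d}$, and closure under $\zeta$, $\tau$ and $\circ$. Each time we describe the $d$-translations of the new operation in terms of $d$-translations of the old ones together with operations of the clone $\Sg[A]{M}$. We then use that $M=\Sg[A]{M}^{(d)}$ is a clone fragment containing $\cC^{(d)}$. By Lemma~\ref{lem:trlApreserves} we may restrict attention to basic translations (for arity $\geq d$) and fictitious translations (for arity $\leq d$). The reason is that $\trl_d(h)\subseteq M$ is equivalent to $\trl_d(h)\preserves Q$ for a relation set $Q$ with $M=\Pola[d]Q$ (Remark after~\ref{clonefragment}), and $\cC\preserves Q$ holds because $\cC^{(d)}\subseteq M$.

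\textbf{Identity.} The $d$-translations of $\id_A$ are the constant $d$-ary functions and the $d$-ary projections (fictitious translations of $\id_A$ when $d\geq 1$). Both kinds lie in $M$, since $M$ contains the constants and, being a clone fragment, all projections.

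\textbf{Cyclic shift and transposition.} Let $f\in M^{*_d}$ be $n$-ary. A basic $d$-translation of $\zeta f$ or $\tau f$ is obtained by fixing $n-d$ positions at constants and filling the remaining $d$ positions with $x_0,\dots,x_{d-1}$ in increasing order. The result is a basic $d$-translation $g$ of $f$ whose free variables appear in some permuted order, i.e.\ $g'(x_0,\dots,x_{d-1})=g(x_{\sigma(0)},\dots,x_{\sigma(d-1)})$ for some permutation $\sigma$. Since $g\in M$ and $M$ is closed under permuting variables (it is the $d$-ary part of a clone), we get $g'\in M$. If $n\leq d$, the fictitious translations of $\zeta f$ are likewise permutations of variables of fictitious translations of $f$.

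\textbf{Composition.} This is the main step. Let $f$ be $n$-ary and $g$ be $m$-ary, both in $M^{*_d}$, and take a basic $d$-translation $h$ of $f\circ g$. Of its $d$ free variables, say $t$ occur among the first $m$ places (the arguments of $g$) and $d-t$ among the last $n-1$ places. Fixing the constants inside $g$ turns it into a $t$-ary basic translation $g_t\in\btrl_t(g)$. Adding fictitious variables, we view $g_t$ as a $d$-ary operation $\hat g\in\trl_d(g)\subseteq M$ depending only on the $t$ variables in question. Similarly, fixing the constants of $f$ outside its first argument gives a $(d-t+1)$-ary basic translation $f_1$ of $f$. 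Since $d-t+1\leq d+1$, there are two cases.

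If $t\geq 1$, then $d-t+1\leq d$, so padding $f_1$ with fictitious variables gives some $\hat f\in\trl_d(f)\subseteq M$. Now $h$ equals $\hat f$ with $\hat g$ substituted in its first argument and the remaining variables placed appropriately, so $h$ lies in $\Sg[A]{M}^{(d)}=M$.

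If $t=0$, then $g_t$ is a constant $c$, and $h$ is obtained from $f$ by fixing the first argument at $c$ and the others as prescribed, so $h\in\btrl_d(f)\subseteq M$ directly.

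When the arity of $f\circ g$ is at most $d$, the required fictitious translations reduce to the previous case. Alternatively, Lemma~\ref{lem:trlApreserves} shows that it suffices that $f\circ g$ preserves $Q$, and this holds because $f$ and $g$ preserve $Q$ by the same lemma.

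The delicate point is precisely this variable bookkeeping in the composition step. One must see that the free variables split correctly, and that the degenerate case $t=0$ (where $f_1$ would have arity $d+1$) is absorbed by turning $g$ into a constant.
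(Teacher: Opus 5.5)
Your proof is correct and follows the same route as the paper's sketch. The identity is handled via projections and constants, $\zeta$ and $\tau$ via permuted translations, and composition by writing each $d$-translation of $f\circ g$ as a composite, inside $\Sg[A]{M}$, of some $\hat f\in\trl_d(f)$ and $\hat g\in\trl_d(g)$. You also supply the variable bookkeeping the paper explicitly omits: the split into $t$ and $d-t$ free variables, the degenerate case $t=0$, and the small-arity case handled via $\Pol Q$.
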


\begin{proof}[Scetch of the proof] It is easy to see that $M^{*_{d}}$ satisfies
  \ref{M5}\ref{M5-1}-\ref{M5-3}. In fact, the $d$-translations of
  $\id_{A}$ are the $d$-ary projections and the constants $\cC^{d}$ and therefore
  $\id_{A}\in M^{*_{d}}=\{f\in\Op(A)\mid \trl_{d}(f)\subseteq M\}$
  since $M$ by assumption
  contains all projections (as clone fragment) and $\cC^{d}$.

  Further, if $f$ and $f'$
  differ only by permutation of variables, then any function belonging to
  $\trl_{d}(f)$ can be obtained from a function belonging to $\trl_{d}(f')$ by a permutation of
  variables, and vice-versa. Thus $f\in M^{*_{d}}$ iff $f'\in M^{*_{d}}$, since $M$ is a
  clone fragment and is closed under permutation of variables (in
  particular under $\zeta$ and $\tau$).

It remains to show $\ref{M5}\ref{M5-4}$.
With the notation from \ref{M5} let 
\begin{align*}
  h(x_{0},\dots,x_{m-1},x_{m},\dots,x_{m+n-2})
        :=f(g(x_{0},\dots,x_{m-1}),x_{m},\dots,x_{m+n-2})
\end{align*}
and $f,g\in M^{*_{d}}$. One has to show $h\in M^{*_{d}}$, i.e.,
$\trl_{d}(h)\subseteq M$. Taking $h'\in\trl_{d}(h)$ and using the
above presentation of $h$ and the definition of $d$-translations, it
is technical but not hard to 
find $f'\in\trl_{d}(f)$ and $g'\in\trl_{d}(g)$ such that $h'$ is a
composition of $g'$ and $f'$: $h'\in\Sg{g',f'}$. Consequently,
$h'\in M$ since $\trl_{d}(f)$ and $\trl_{d}(g)$ are contained in $M$
by assumption and $M$ is (as clone fragment) closed under composition.
We do not go further into these technical details because the
Proposition shall not be used further.
\end{proof}

\normalsize
%

We already know that $M^{*_{d}}$ is a clone for a clone fragment
$M=\Pol\rho$ whenever $\rho$ satisfies $\Xi_{d}$. Are there more such
clone fragments? Thus there arises the question:
\begin{center}
  \emph{For which clone fragments $M\leq \Opa[d](A)$
      the preclone $M^{*_{d}}$ is a clone?}
\end{center}
The answer shall be given in Proposition~\ref{prop:StarisCloneIffGQuord}
via generalized quasiorders
and also in Corollary~\ref{A3cor} in terms of the $\du$-closure 
with respect to
the following closure operator $M\mapsto\duclose[d] M$.

\begin{definition}\label{def:Rclosure} For $M\subseteq A^{A^{d}}$ let
  \begin{align*}
    \Rclose{M}:=\bigcap\{N\mid M\cup \cC\subseteq N \leq A^{A^{d}},\text{ and
    $N^{*_{d}}$ is a clone}\}.
  \end{align*}
  A clone fragment $M\leq A^{A^{n}}$ is called \New{\dR-closed} if
  $\Rclose{M}=M$.

\end{definition}

\begin{remark}\label{rem:Rclosure} 
The operator $M\mapsto\duclose{M}$ (for $M\subseteq \Opa[d](A)$) is a
closure operator. $\duclose{M}$ is the least $d$-ary clone fragment $N$
containing $M$ and all constants $\cC^{(d)}$ such that $N^{*_{d}}$ is
a clone (it follows from Remark~\ref{rem:MstarPoe}(A) that
$(\duclose{M})^{*_{d}}$ is a clone).
By definition we have: \textit{a clone fragment $M$ is \du-closed if
  and only if the preclone $M^{*_{d}}$ is a clone.}




\end{remark}

  \section{R-relations and arities}\label{sec:Rrelations}
  
  In the previous section we defined a higher arity generalization of
  a translation of an operation $f \in \Op(A)$ and then defined the
  property $\Xi_d$ which we wish to understand. It turns out that in
  order to understand the $\Xi_d$ property, it is helpful to view
  relations as sets of multisorted operations. This perspective allows
  us to view both clone fragments and relations from a single vantage
  point, where the only key distinction between the two kinds of
  relations is the operations in a clone compose, while our
  interpretation of relations as sets of multisorted operations does
  not in general permit composition.

  Viewing a clone fragment as a relation is not a new idea. Indeed, it
  is often the case that a set of $n$-ary operations
  $X \subseteq A^{A^n}$ is considered as a relation of arity $|A^n|$,
  i.e.\ the tuples of $A^n$ are ordered and then treated as
  coordinates. The interpretation of a set of operations as a relation
  underlies the most basic aspects of the theory.
   Clearly, for any finite set $\alpha$ (the unusual notation $\alpha$
   will be useful later) 
  we may consider relations as
  subsets $\rho \subseteq A^{\alpha}$, with the understanding that if $\rho$
  needs to be turned into a `true' relation, we can order $\alpha$ with a
  bijection $\eta: |\alpha| \to \alpha$ and then consider instead the relation
  \[
    \rho\circ\eta = \{ g \circ \eta\mid g \in \rho \} \subseteq
    A^{|\alpha|}.
  \]
  Although subsets of $A^{\alpha}$ contain exactly the same information as
  subsets of $A^{|\alpha|}$, there can be structure in the set $\alpha$ that is
  missing from the ordinal $|\alpha|$. To give a specific example, the set
  $2^2$ is from a geometrical viewpoint a square, and hence a
  $2$-dimensional object, while its cardinality $4$ is from a
  geometrical viewpoint a line, and hence a $1$-dimensional
  object. The results we present here rely critically on this
  rectangular structure, since all of our definitions make reference
  to it.

  Another basic and important example is the one given earlier, where
  we consider some set $M \subseteq A^{A^n}$ of $n$-ary operations on
  $A$. Note that there are three notions of arity at play here. On the
  one hand, every $f \in M$ is a function of arity $n$, while on the
  other hand, $M$ can be naturally considered as a $|A^n|$-ary
  relation. We distinguish these two arities, calling the former type
  a \emph{functional arity} and the latter type a \emph{relational
    arity}. Since formally $A^n$ and $|A^n|$ are distinct, we call
  $A^n$ a \emph{rectangular arity}. These ideas can be extended as
  follows.

\begin{definition}\label{def:Rrelation}
  Let $s \geq 1$, let $A$ and $B_1, \dots, B_{s}$ be nonempty sets,
  and let $d_1, \dots, d_{s} \geq 0$. We call any
  \[
    \rho \subseteq A^{B_1^{d_1} \times \dots \times B_{s}^{d_{s}}}
  \]
  an \emph{R-relation}. In this situation, we say that $\rho$ has
  \emph{rectangular arity} (for short \emph{arity})
  $\alpha := B_1^{d_1} \times \dots \times B_{s}^{d_{s}}$,
  \emph{relational arity}
  $m := |B_1^{d_1} \times \dots \times B_{s}^{d_{s}}|$, and
  \emph{functional arity} $(d_1, \dots, d_{s})$. We denote by
  $d_\alpha := d_1 + \dots + d_{s}$ the \emph{dimension} of $\rho$. A
  rectangular arity of the form $B^d$ is called a \emph{simple}
  rectangular arity, while all other rectangular arities are called
  \emph{compound} rectangular arities.

The set of all R-relations of (rectangular) arity $\alpha$ on base set
$A$ is denoted by $\RRela[\alpha](A)$. 
\end{definition}

\begin{remark}\label{rem:Rrelation}

Let us make some remarks about
Definition~\ref{def:Rrelation}. First, we note that for a given
relation $\sigma \subseteq A^m$ of arity $m$, there are potentially many
formally distinct corresponding R-relations $\rho$ with some
rectangular arity $B_1^{d_1}\times \dots \times B_{s}^{d_{s}}$, such
that $\sigma = \rho\circ\eta$  when precomposing the elements of $\rho$
with a bijection $\eta$ from the relational arity
$m=|B_1^{d_1}\times \dots \times B_{s}^{d_{s}}|$ to the rectangular
arity $B_1^{d_1}\times \dots \times B_{s}^{d_{s}}$. 

However, the
collection of all relations of arity $m$ and the collection of all
R-relations with rectangular arity
$B_1^{d_1}\times \dots \times B_{s}^{d_{s}}$ contain exactly the same
information from the perspective of polymorphisms. 

Note that there is an obvious bijection between
R-relations of rectangular arity
$\alpha=B_{1}^{d_{1}}\times\ldots\times B_{s}^{d_{s}}$ and arity
$\tilde\alpha=\tilde B_{1}^{d_{1}}\times\ldots\times \tilde
B_{s}^{d_{s}}$ as long as the cardinalities $|B_{i}|=|\tilde B_{i}|$
agree ($i\in\{1,\dots,s\}$).

Hence, the only distinction between the usual notion of a relation and
an R-relation is the manner in which relations are represented and, as
we mentioned earlier, the representation of relations as R-relations
is the foundation on which our definitions rest. In fact, since
different rectangular arities can possess the same relational arity,
we will see that our perspective allows for the isolation of distinct
sets of relations which all possess the same relational arity.

We also remark that Definition~\ref{def:Rrelation} is designed
to give us access to standard function notation when dealing with
elements of an R-relation. Indeed, every R-relation can be treated
as a set of ``multisorted'' and multiple arity operations.



Specifically,
given an R-relation of rectangular arity
\[\alpha = \alpha_{1}\times\ldots\times\alpha_{s}=B_1^{d_1} \times \dots \times B_{s}^{d_{s}}\] and
$g\in \rho$, we can access the value of $g$ at a particular input with
the notation
\begin{align*}
 g(X^{(\alpha)}) \text{ or, more explicit, }
  g(X^{(\alpha_{1})},\dots,X^{(\alpha_{s})}), 
\end{align*}
where $X^{(\alpha)}=(X^{(\alpha_{1})},\dots,X^{(\alpha_{s})})$ and
   $X^{(\alpha_{i})}=(x_{0}^{(\alpha_{i})},\dots,x_{d_{i}-1}^{(\alpha_{i})})\in
   \alpha_{i}=B_{i}^{d_{i}}$, 
  ($i\in\{1,\dots,s\}$).
\end{remark}

\begin{example}\label{RR3} Let
$\rho\in\RRela[\alpha](A)$ with
$\alpha=\alpha_{1}\times\alpha_{2}=B_{1}^{1}\times
B_{2}^{2}=\{0,1,2\}^{1}\times\{0,1\}^{2}$. Thus $d_{1}=1$, $d_{2}=2$.
 Here
(as in further examples) we take ordinals for $B_{i}$: $B_{1}=3=\{0,1,2\}$ and $B_{2}=2=\{0,1\}$.
 The
dimension is
$d_{\alpha}=d_{1}+d_{2}=3$, the relational arity is
$|\alpha|=3^{1}\cdot 2^{2}=12$. 

An element  of $\rho$ is a mapping $g:\alpha\to A: X^{(\alpha)}\mapsto
g(X^{(\alpha)})$ where $X^{(\alpha)}=(X^{(\alpha_{1})},X^{(\alpha_{2})})
=(x_{0}^{(\alpha_{1})},x_{0}^{(\alpha_{2})},x_{1}^{(\alpha_{2})})\in\alpha$
and can be represented as ``rectangular cube'' as follows:

\nopagebreak
    {\hspace*{10ex}\includegraphics{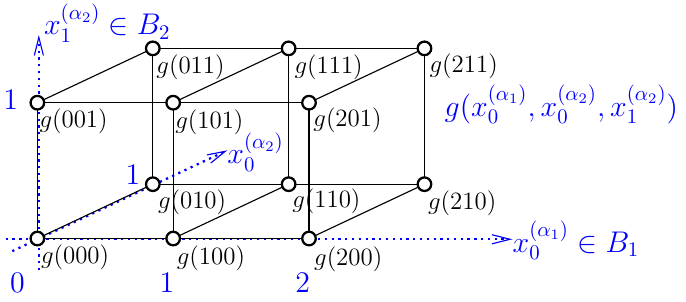}}

\end{example}

Using the above notation in \ref{rem:Rrelation}, we can give the
following definition: 

\begin{definition}\label{def:depend}
We say that $g$ \New{depends on $\alpha_j$} ($j\in\{1,\dots,s\}$) if
there exist  
$X^{(\alpha_{1})},\dots,X^{(\alpha_{s})}$ and $Y^{(\alpha_{j})}\in
\alpha_{j}=B_{j}^{d_{j}}$
such that 
\begin{align*}
g(X^{(\alpha_{1})},\dots,&X^{(\alpha_{j-1})},
X^{(\alpha_{j})},X^{(\alpha_{j+1})},\dots,X^{(\alpha_{s})})\\
&\neq
g(X^{(\alpha_{1})},\dots,X^{(\alpha_{j-1})},
Y^{(\alpha_{j})},X^{(\alpha_{j+1})},\dots,X^{(\alpha_{s})}).
\end{align*}
If $g$ does not depend on $\alpha_{j}$, then we also say that 
\Def{$g$ is constant at $\alpha_{j}$}, i.e., for any constant tuple
$C^{(\alpha_{j})}\in\alpha_{j}$ we have
\begin{align*}
g(X^{(\alpha_{1})},\dots,X^{(\alpha_{s})})=
g(X^{(\alpha_{1})},\dots,X^{(\alpha_{j-1})},
C^{(\alpha_{j})},X^{(\alpha_{j+1})},\dots,X^{(\alpha_{s})}).
\end{align*}
for all $X^{(\alpha_{1})},\dots,X^{(\alpha_{s})} $.
\end{definition}

Generalizing $\rho\circ\eta$ introduced above, we can
``move'' between R-relations of different rectangular arities as
follows:

\begin{definition}\label{def:aritymap}
Let $\mu:\beta\to\alpha$ be a mapping between rectangular arities
$\alpha$ and $\beta$. Then we have a mapping
$\RRela[\alpha](A)\to\RRela[\beta](A):\rho\mapsto \rho\circ\mu$ given
by precomposing $\mu$ to the elements $g:\alpha\to A$ of
$\rho\subseteq A^{\alpha}$:
\begin{align*}
  \rho\circ\mu:=\{g\circ\mu\mid g\in\rho\}\subseteq A^{\beta}.
\end{align*}
We call $\rho\circ\mu$ an \Def{arity transformation of $\rho$}.
\end{definition}

As mentioned above, R-relations $\rho\subseteq A^{\alpha}$ (with
relational arity 
$\alpha=B_{1}^{d_{1}}\times\ldots\times B_{s}^{d_{s}}$) can be viewed
as usual relations $\rho\circ\eta\subseteq A^{m}$ with
$m=|\alpha|$. Therefore such notions like preservation $\preserves$,
polymorphism, invariant relation immediately can be applied to
R-relations, too ($\Pol\rho:=\Pol(\rho\circ\eta)$).
Nevertheless, for completeness we give here the full definitions.

\begin{definition} \label{def:preservation}
Let $f:A^{n}\to A$ be an $n$-ary operation on $A$ and $\rho\subseteq
A^{\alpha}$ be an R-relation of (rectangular) arity $\alpha$. We say
that \New{$f$ preserves $\rho$} (notation: $f\preserves\rho$) or $f$
is a \New{polymorphism} of $\rho$, or $\rho$ is \New{invariant} for
$f$, if $f$ maps elements of $\rho$ (which are mappings $g:\alpha\to A$) into elements of $\rho$, more
precisely, if
\begin{align*}
  \forall g_{1},\dots,g_{n}\in\rho: f(g_{1},\dots,g_{n})\in\rho,
\end{align*}
where $f(g_{1},\dots,g_{n}):\alpha\to A$ is defined componentwise as
\begin{align*}
  f(g_{1},\dots,g_{n})(b):=f(g_{1}(b),\dots,g_{n}(b))\text{ for each $b\in\alpha$}.
\end{align*}
\end{definition}

\section{Translations for R-relations of simple arity}\label{sec:trl}

In Section~\ref{sec:Xi}, we defined translations only for operations
$f \in A^{A^d}$, for some nonempty $A$ and $d \geq 0$, in order to
take the clearest path to defining our motivating property $\Xi_d$. In
this section we make the straightforward generalization to functions
$g \in A^{B^d}$, for a nonempty $B$ which is potentially distinct from
$A$. Hence, we obtain a notion of translation which applies to any
R-relation of simple arity. The simplest kinds of translations are among
those that increase or decrease the arity of the function by one,
either by the addition of an initial or terminal fictitious variable, or by
substituting a single variable for a constant. We begin by giving
names to these kinds of translations.

\begin{definition}[elementary basic translations]\label{def:ebtrl1}

  Let $d\in\N$. For a $(d+1)$-ary function $g:B^{d+1}\to A$, a $d$-ary
  function $g':B^{d}\to A$ is called an \Def{elementary basic
    translation of $g$} if $g'$ is obtained from $g$ by substituting a
  single constant from $B$ for one of its variables and filling up the
  (non-substituted) places by the variables
  $x_{0},x_{1},\dots,x_{d-1}$, i.e.,\ $g'$ is of the form
  \begin{align*}\label{ebtrl1}\tag*{\ref{def:ebtrl1}(1)}
    g'(x_{0},\dots,x_{d-1}):=g(x_0, \dots, x_{i-1}, c, x_{i}, \dots, x_{d-1})
  \end{align*}
  for some $0 \leq i \leq d$.
  
  For $g \in A^{B^{d+1}}$, we denote by $\ebtrl(g)$ the set of all
  elementary basic translations of $g$. We use the same notation for
  sets of functions $F \subseteq A^{B^{d+1}}$ and denote by
  $\ebtrl(F)$ the union of all $\ebtrl(g)$ for $g \in F$.
 If $g$ is nullary, then we
formally define $\ebtrl(g):=\emptyset$ (there is nothing to substitute).

\end{definition}

\begin{definition}[basic translations]\label{def:btrl2}

  Let $g\in A^{B^{n}}$ ($n\in\N$) and $F \subseteq \bigcup_{n \in \N} A^{B^n}$. 
We define the set of
  \Def{basic translations} of $g$ and $F$ as
  \begin{align*}
    \btrl(g):=\bigcup_{i=0}^{n} \ebtrl^i(g), &&\btrl(F) :=
    \bigcup_{g\in F}\btrl(g),
  \end{align*}
where $\ebtrl^{0}(g):=\{g\}$ and
$\ebtrl^{i+1}(g):=\ebtrl(\ebtrl^{i}(g))$ for $i\in\{0,1,\dots,n-1\}$.

  The collection of $d$-ary functions belonging to $\btrl(F)$ is
  denoted by $\btrl_d(F)$. Obviously we have
  $\btrl_{d}(g)=\ebtrl^{n-d}(g)$ if $d\leq n$.

  \begin{remarknote}
    Note that this definition of $\btrl(F)$ is equivalent to the one
    given in Definition~\ref{trlA-1} in the case when $B=A$.
  \end{remarknote}
\end{definition}

\begin{definition}[elementary fictitious
  translations] \label{def:eftrl1}
  Let $d \geq 0$. For an $d$-ary function $g: B^{d} \to A$, a
  $(d+1)$-ary function $h: B^{d+1}\to A$ is called an \emph{elementary
    fictitious $d$-translation} of $g$, if $h$ is obtained from $g$ by
  prepending or appending to the variables of $g$ a single dummy
  variable, i.e., $h=\eftrl^{1|0}(g)$ or $h=\eftrl^{0|1}(g)$, where
  \begin{align*}\label{eftrl1}
    \eftrl^{1|0}(g)(x_0, \dots, x_{d}) &:= g(x_{1}, \dots, x_{d}), 
\tag*{\ref{def:eftrl1}(1)}\\     \label{eftrl2} 	
\eftrl^{0|1}(g)(x_0, \dots, x_{d})&:=g(x_{0}, \dots, x_{d-1}). \tag*{\ref{def:eftrl1}(2)}
  \end{align*}

  For $g \in A^{B^{d+1}}$, let
  $\eftrl(g):=\{\eftrl^{1|0}(g),\eftrl^{0|1}(g)\}$ be the set  of all
  elementary fictitious translations of $g$. We use the same notation for
  sets of functions $F \subseteq A^{B^{d+1}}$ and denote by
  $\eftrl(F)$ the union of all $\eftrl(g)$ for $g \in F$.



\end{definition}

\begin{definition}[fictitious translations] \label{def:ftrl2}

  Let $g\in A^{B^{n}}$ ($n\in\N$) and $F \subseteq \bigcup_{n \in \N} A^{B^n}$. We define the set of
  \Def{fictitious translations} of $g$ and $F$ as
  \begin{align*}
    \ftrl(g):=\bigcup_{i\geq 0} \eftrl^i(g), &&\ftrl(F) :=
    \bigcup_{g\in F}\ftrl(g),
  \end{align*}
where $\eftrl^{0}(g):=\{g\}$ and
$\eftrl^{i+1}(g):=\eftrl(\eftrl^{i}(g))$ for $i\in\N$.
  The collection of $d$-ary functions belonging to $\ftrl(F)$ is
  denoted by $\ftrl_d(F)$. Obviously we have
  $\ftrl_{d}(g)=\eftrl^{d-n}(g)$ if $n\leq d$.

  \begin{remarknote}
    Note that this definition of $\ftrl(F)$ is equivalent to the one
    given in Definition~\ref{trlA-2} in the case when $B=A$.
  \end{remarknote}
\end{definition}

\begin{definition}[translations]\label{def:trl1}

  For $F\subseteq \bigcup_{n \in \N} A^{B^n}$ let $\trl(F)$ be the
  least subset of $\bigcup_{n \in \N} A^{B^n}$ which is closed under
  $\btrl$ and $\ftrl$ and contains $F$ (thus we have
  $F\subseteq \trl(F)=\btrl(\trl(F))=\ftrl(\trl(F))$\,). The functions
  in $\trl(F)$ are called \Def{translations of $F$}. Moreover, for
  $d\in \N$, let $\trl_{d}(F):=\trl(F)\cap A^{B^{d}}$, its elements
  are called \Def{$d$-translations of $F$}.
\end{definition}

The following lemma shows how $d$-translations can be (nearly
uniquely) decomposed into
a sequence of elementary basic $d$-translations followed by a sequence
of elementary fictitious $d$-translations. 

For prepending $u$ and appending $v$ fictitious variables we use
the following notation:
$\eftrl^{u|v}(g):=(\eftrl^{1|0})^{u}(\eftrl^{0|1})^{v}(g)$ (cf.~\ref{def:eftrl1}),
$u,v\in\N$, in particular we have $\eftrl^{0|0}(g)=\{g\}$.



\begin{lemma}\label{lem:trldecomposition}
Let $g\in A^{B^{n}}$, $n,d\in \N$ and $\bar
d:=\min\{d,n\}$.
\begin{enumerate}[label=\textup{(\roman*)}]

\item\label{trldecomposition-i} $\ebtrl(\eftrl(g))=\{g\}\cup \eftrl(\ebtrl(g))$, i.e.,  
      $\ebtrl\circ \eftrl =\id\cup\eftrl\circ\ebtrl$.

\item \label{trldecomposition-ii}
For each $g'\in\trl_{d}(g)$, we have
$g'\in\eftrl^{u|v}(\ebtrl^{n-t}(g))\subseteq\eftrl^{d-t}(\ebtrl^{n-t}(g))$ for some $u,v,t\in\N$ with
$t\in\{0,\dots,\bar d\}$ and $u+v=d-t$. In particular, $g'$ has the form
  \begin{align*}\label{trl-1}\tag*{\ref{lem:trldecomposition}(1)}
    g'(x_{0},\dots,x_{d-1}):=g(\underbracket{\dots}_{c\in B},x_{u},\underbracket{\dots}_{c\in B},x_{u+1},\underbracket{\dots}_{c\in B},\dots,x_{u+t-1},\underbracket{\dots}_{c\in B})
  \end{align*}
(i.e., the first $u$ and the last $v$ variables of $g'$ are fictitious,
altogether there are $n-t$ places substituted in $g$ by constants; for $t=0$, all places in $g$ are substituted by constants and all
variables of $g'$ are fictitious).

\item\label{trldecomposition-iii}
     $\trl_{d}(g)
     =\displaystyle\bigcup_{t=0}^{\bar d}\eftrl^{d-t}(\ebtrl^{n-t}(g))
      =\ftrl_{d}(\btrl(g))$.

\end{enumerate}
\end{lemma}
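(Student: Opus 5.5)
The plan is to establish the commutation rule (i) by direct computation, and then to obtain (ii) and (iii) from it by a rewriting argument on words in $\ebtrl$ and $\eftrl$.

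\emph{Step (i).} Write $h=\eftrl^{1|0}(g)$, so that $h(x_0,\dots,x_d)=g(x_1,\dots,x_d)$, where $g$ is $d$-ary. An elementary basic translation of $h$ substitutes a constant $c$ at some place $i$.
\begin{itemize}
\item If $i=0$, the fictitious variable disappears and we get back $g$.
\item If $i\ge 1$, we get $g(x_0,\dots,x_{i-2},c,x_{i-1},\dots)$ with a dummy variable prepended. This is exactly $\eftrl^{1|0}$ applied to an elementary basic translation of $g$.
\end{itemize}
The case $h=\eftrl^{0|1}(g)$ is symmetric, with $i=d$ giving $g$. Conversely, every element of $\{g\}\cup\eftrl(\ebtrl(g))$ arises in one of these ways: take $c$ at place $0$ (or place $d$), or shift the substitution position by one. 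For $d=0$ we have $\ebtrl(g)=\emptyset$, and then only $g$ itself occurs. This gives $\ebtrl\circ\eftrl=\id\cup\eftrl\circ\ebtrl$.

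\emph{Step (ii).} By Definition~\ref{def:trl1}, $\trl(\{g\})$ is the union of all sets $w(g)$, where $w$ ranges over finite words in the letters $\ebtrl$ and $\eftrl$. To see this, note that this union contains $g$ and is closed under $\btrl$ and $\ftrl$, which are unions of powers of these letters. Hence it contains $\trl(\{g\})$, and it is clearly contained in it.

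Now induct on the number of inversions in $w$, that is, pairs where an $\eftrl$ is applied before an $\ebtrl$. By (i), any adjacent factor $\ebtrl\circ\eftrl$ can be replaced either by the empty word, which shortens $w$, or by $\eftrl\circ\ebtrl$, which reduces the inversion count. In both cases the induction measure (length, inversions) decreases lexicographically. Thus every $g'\in\trl(g)$ lies in some $\eftrl^{k}(\ebtrl^{j}(g))$.

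It remains to read off the parameters. The letter $\ebtrl$ lowers arity by $1$ and is empty on nullary functions, so $j\le n$; set $t:=n-j$. If $g'$ is $d$-ary, then $k=d-t\ge 0$, hence $t\le\bar d$. Elementary fictitious translations in arbitrary order yield a function in $\eftrl^{u|v}$ with $u+v=k$, since prepending and appending commute. Unwinding the definitions gives the explicit form \ref{trl-1}.

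\emph{Step (iii).} The inclusion "$\subseteq$" in the first equality is (ii). The inclusion "$\supseteq$" holds because $\trl(g)$ is closed under $\btrl$ and $\ftrl$. The second equality follows since $\btrl_t(g)=\ebtrl^{n-t}(g)$ and $\ftrl_d(g'')=\eftrl^{d-t}(g'')$ for $t$-ary $g''$ with $t\le d$.

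The main obstacle is the bookkeeping in (ii): one must ensure that the rewriting terminates, which the lexicographic measure handles, and that the degenerate nullary cases cause no trouble.
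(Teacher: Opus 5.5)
Your proof is correct and follows the same route as the paper. You prove (i) by distinguishing whether the constant is substituted at the newly added fictitious place, obtain (ii) by commuting $\ebtrl$ past $\eftrl$ using (i), and read (iii) off from (ii); your description of $\trl(\{g\})$ as the union of all words in $\ebtrl,\eftrl$ applied to $g$, together with the lexicographic (length, inversions) termination argument, just makes explicit what the paper compresses into ``directly follows from (i)''.
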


\begin{proof}\ref{trldecomposition-i}:
  Let $g'\in\ebtrl(\eftrl(g))$. Then $g'$ is obtained from $g$ by adding
  a fictitious variable (prepending or appending, respectively) and
  then substituting a variable by a 
  constant. If the added variable is substituted by the constant, then
  we get $g$ back, i.e., $g'=g$. Otherwise, one can also first 
  substitute the constant (at the appropriate place) and then add the
  fictitious variable (prepending or appending, respectively), i.e.,
  $g'\in\eftrl(\ebtrl(g))$. In particular, substituting constants and
  adding fictitious variables commute in that case, i.e.,
  $\eftrl(\ebtrl(g))\subseteq\ebtrl(\eftrl(g))$.

\ref{trldecomposition-ii}: By definition, a translation of $g$ can be
obtained by consecutive application of $\ebtrl$ and $\eftrl$. Therefore the
statement directly follows from $\ref{trldecomposition-i}$ since one
can apply first a number of $\btrl$, say $n-t$ times (i.e., evaluating
$n-t$ variables of  $g$ at a constant, $0\leq t\leq n$) and then
adding fictitious variables, say $u$ prepending and $v$
appending. Since the result $g'$ 
is to be $d$-ary, the constraints $t\leq d$ (thus $t\leq \bar d$) and $n-(n-t)+u+v=d$
(equivalently $u+v=d-t$) follow.
\ref{trl-1} expresses this situation.

\ref{trldecomposition-iii} directly follows from 
\ref{trldecomposition-ii} and the definitions.
\end{proof}

Note that the last equation in \ref{trldecomposition-iii} shows that
Definition~\ref{def:trlA} of $\trl_{d}$ coincides with (the seemingly
different) 
Definition~\ref{def:trl1} for $B=A$.

\begin{remark}\label{mu-translation}

There is a further tool for the representation of translations.
Let $\alpha:=B^{n}$ and $\beta:=B^{d}$. The equation \ref{trl-1} can be interpreted as a mapping
$A^{\alpha}\to A^{\beta}$ with assigns to each $g\in A^{\alpha}$ a
specific $d$-ary translation $g'\in A^{\beta}$. This mapping can be
described uniquely by a mapping $\mu:\beta\to\alpha$ such that 
$g'=g\circ\mu$ (i.e., as a special arity transformation, notation see Definition~\ref{def:aritymap}).
Namely, for $X=(x_{0},\dots,x_{d-1})\in \beta$ we
put 
\[\mu(X):=(\underbracket{\dots}_{c\in B},x_{u},
\underbracket{\dots}_{c\in B},x_{u+1},
\underbracket{\dots}_{c\in B},\dots,x_{u+t-1},
\underbracket{\dots}_{c\in B})\in\alpha,\] 
i.e., the argument of $g'$ of the left side of equation \ref{trl-1}
is mapped to the argument of
$g$ at the right side, thus $g'(X)=g(\mu(X))$. To give a precise
definition of translation in this way, it is enough to define $\mu$
for elementary translations 
(since each translation is a composition of elementary ones).

According to \ref{ebtrl1} an elementary basic translation $g'$ of $g$ is given by
$g'(X)=g(\mu(X))$ where $\mu$ is the mapping
\begin{align*}
  \mu:B^{d}\to B^{d+1}:(x_{0},\dots,x_{d-1})\mapsto(x_0, \dots, x_{i-1}, c, x_{i}, \dots, x_{d-1}),
\end{align*}
i.e., adding the constant $c$ at a new(!) place before $x_{i}$
  (in case that the last variable is evaluated at a constant in $g$,
  the new place is after $x_{d-1}$).

Analogously, according to \ref{eftrl1} and \ref{eftrl2}, an elementary
fictitious translation $g'$ of $g$ is given by $\eftrl^{1|0}(X)=g(\nu_{1|0}(X))$
or $\eftrl^{0|1}(X)=g(\nu_{0|1}(X))$ with the mappings
\begin{align*}
  &\nu_{1|0}:B^{d+1}\to B^{d}:(x_0, \dots, x_{d})\mapsto
                   (x_{1}, \dots,x_{d}),\\
    &\nu_{0|1}:B^{d+1}\to B^{d}:(x_0, \dots, x_{d})\mapsto
                   (x_{0}, \dots, x_{d-1}),
\end{align*}
i.e., deleting the first or last component, respectively.

\end{remark}




\section{Adjoints and higher-dimensional generalized
  quasiorders of simple arity}\label{sec:GQuord}

We wish to generalize the properties of transitivity and reflexivity
from the context of binary relations to arbitrary R-relations. As
noted in the introduction, this has already been achieved in two (in
some sense orthogonal) directions: in \cite{JakPR2024} the right
generalization of these properties was discovered for what we now
understand to be $1$-dimensional R-relations, i.e.\ those relations
with rectangular arity $\alpha = m^1$ for some finite ordinal $m$,
while in \cite{Moo2024} the right generalization was found for what
we now understand to be R-relations with rectangular arity
$\alpha = \underbracket{2^1 \times \dots \times 2^1}_n$ (which is not
to be confused with rectangular arity $2^n$).

As mentioned, we limit our exposition to R-relations of simple arity at
first. 

\begin{definition}[The monotone Galois connection $\ebtrl - \ostar$]
\label{def:ostar}
  Let $d\in\N$. According to \ref{Galoisconnections}, the mapping
  $\ebtrl:A^{B^{d+1}}\to \Pow(A^{B^{d}}):h\mapsto\ebtrl(h)$ induces a
  monotone Galois connection, whose operators are denoted by 
  \begin{align*}
    \ebtrl(\sigma)&:=\bigcup_{h \in \sigma} \ebtrl(h)
      \text{ for $\sigma\subseteq A^{B^{d+1}}$ and}\\
    \ostar(\rho)&:=\{ h\in A^{B^{d+1}}\mid \ebtrl(h) \subseteq
       \rho\} \text{ for $\rho\subseteq A^{B^{d}}$.}
  \end{align*}
 i.e., we have
  \begin{align*}
  \ebtrl: &\Pow(A^{B^{d+1}}) \to \Pow(A^{B^{d}}):\sigma\mapsto\ebtrl(\sigma)\\
\ostar: &\Pow(A^{B^{d}}) \to \Pow(A^{B^{d+1}}): \rho\mapsto \ostar(\rho),
  \end{align*}
where $\ostar$ is the upper and $\ebtrl$ the lower adjoint, i.e., for
$\sigma\subseteq  A^{B^{d+1}}$ and $\rho\subseteq A^{B^{d}}$ we have (cf.\ \ref{Galoisconnections}):
\begin{align*}\label{ostar-1}\tag*{\ref{def:ostar}(1)}
\ebtrl(\sigma)\subseteq\rho&\iff \sigma\subseteq\ostar(\rho), 
  \text{ in particular,}\\
\label{ostar-2}\tag*{\ref{def:ostar}(2)}
  \ebtrl(h)\subseteq\rho&\iff h\in\ostar(\rho) \text{ (for $h\in
  A^{B^{d+1}}$).}
\end{align*}
\end{definition}

 Note
that we have defined an infinite collection of such monotone Galois
connections, one for each $d \geq 0$. Since their essential properties
do not rely explicitly on the choice of $d$, we do not distinguish
these functions with indices, and instead rely on the context in which
they are used to specify their domains.

For our definitions of reflexivity and transitivity for an R-relation
$\rho \subseteq A^{B^d}$ of rectangular arity $\alpha = B^d$ we need
two further mappings, one increasing the arity the other decreasing
the arity by $1$. 

\begin{definition}\label{def:Delta} Let $d\in\N$. 
For  $h \in A^{B^{d+1}}$, let
\[
  \bDelta(h) := \{ h(x_0, \dots, x_i, \dots, x_{j-1}, x_i,
  x_j, \dots, x_{d-1})\mid  0 \leq i < j \leq d \},
\]
denote the set of all functions obtained from $h$ by identifying
exactly two variables (there should be
    no confusion between $\bDelta$ and the operator $\Delta$
    introduced in Definition \ref{M5}\ref{M5-6}). Note that
    $\bDelta(h)=\emptyset$ for unary $h$ (in case $d=0$).
We get the following two operators induced by
$\eftrl$ (cf.\ Definition~\ref{def:eftrl1}) and $\bDelta$, which we denote with the same symbol.
\begin{align*}
   \eftrl:& \Pow(A^{B^{d}}) \to \Pow(A^{B^{d+1}}):
    \rho \mapsto \bigcup_{g \in \rho} \eftrl(g),\\
   \bDelta:& \Pow(A^{B^{d+1}}) \to \Pow(A^{B^d}):
  \sigma \mapsto \bigcup_{h \in \sigma} \bDelta(h).
\end{align*}

\end{definition}

Note that by definition all operators $\ebtrl,\ostar,\eftrl,\bDelta$
defined so far are monotone with respect to inclusion.

\begin{definition}\label{Adef:Refl+Trans}
  Let $\rho \subseteq A^{B^d}$ be an R-relation of arity
  $\alpha = B^d$. We say that $\rho$ is
  \begin{enumerate}[label=\textup{(\roman*)}]
  \item \label{Adef:internally-reflexive}\emph{internally reflexive}
    if $\eftrl(\rho) \subseteq \ostar(\rho) $,

  \item \label{Adef:transitive} \emph{transitive} if
    $\mathbf{\Delta}(\ostar(\rho)) \subseteq \rho$,

  \item \label{Adef:weakSimpleQuord} a \emph{weak $d$-dimensional
      generalized quasiorder (of simple rectangular arity)} if $\rho$
    is internally reflexive and transitive,

  \item \label{Adef:reflexive}\emph{reflexive} if it is internally
    reflexive and additionally contains every constant function
    $c_{a}^{(d)}:(x_0, \dots, x_{d-1})\mapsto a$ for $a \in A$, 

  \item \label{Adef:SimpleQuord} a \emph{$d$-dimensional
      generalized quasiorder (of simple rectangular arity)} if $\rho$
    is reflexive and transitive.
  \end{enumerate}
\end{definition}

\begin{remark}\label{rm:equivalentreflexivity}
  Internal reflexivity of a relation $\rho \subseteq A^{B^d}$ is
  equivalently defined by each of the following conditions:

  \begin{enumerate}
  \item[(i')]$\ebtrl(\eftrl(\rho)) \subseteq \rho$,
  \item[(i'')]$\eftrl(\ebtrl(\rho)) \subseteq \rho$ (provided $d \geq
    1$).
  \end{enumerate}
Indeed, (i') is equivalent to
\ref{Adef:Refl+Trans}\ref{Adef:internally-reflexive}
  because $\ebtrl$ is the lower adjoint of $\ostar$. The equivalence
  of (i') and (i'') directly follows from
\ref{lem:trldecomposition}\ref{trldecomposition-i}:
   $\ebtrl(\eftrl(\rho))=\rho\cup\eftrl(\ebtrl(\rho))$.

Moreover, in \ref{lem:adjointproperty}\ref{adj-ii} we shall see that $\rho$ is
internally reflexive if and only if $\rho=\trl_{d}(\rho)$.

Further we mention 
that an internally reflexive relation
$\rho\subseteq A^{\alpha}$ ($\alpha=B^{d}$) contains all constant maps 
$g_{a}:(x_{0},\dots,x_{d-1})\mapsto a$ for $a\in \Ima\rho$ (notation see
\ref{def:ImageRrelation}) because $\Ima\rho=\ebtrl^{d}(\rho)$ and
since $g_{a}\in\eftrl^{d}(\ebtrl^{d}(\rho))\subseteq\rho$ 
(for $a\in\Ima\rho$) by (i') and (ii').
Therefore $\cC_{\Ima\rho}\subseteq\Pol\rho$, and an internally reflexive $\rho$ is reflexive if and only
if $\Ima\rho=A$. Consequently, a relation $\rho$ is a generalized
quasiorder if and only if it is a weak generalized quasiorder and
$\Ima\rho=A$. Unary relations are always weak generalized quasiorders
(transitivity becomes trivial).

Note that an internally reflexive relation
becomes reflexive if we
change the underlying base set, namely if we
consider $\rho$ as a relation $\rho\subseteq (\Ima\rho)^{\alpha}$ on
the base set $\Ima\rho$.
 \end{remark}

\begin{example}\label{ex:reftra} Let $\rho\subseteq A^{\alpha}$ be an
  R-relation of simple arity. For three examples, namely $\alpha=3^{1}$, 
  $\alpha=2^{2}$ and $\alpha=2^{3}$, we show how internal reflexivity
  and transitivity of $\rho$ can be understood geometrically. 

At first
  an element $g\in\rho$ is schematically given (the mapping $g:\alpha\to A$ is
  represented as an object in a $d_{\alpha}$-dimensional space, cf.\ 
  Example~\ref{RR3}).
 
  Then a typical elementary basic translation $g'$ of $g$ is choosen and
  its elementary ficticious translations
  $\eftrl(g')=\{\eftrl^{1|0}(g'),\eftrl^{0|1}(g')\}$ (cf.\
  Definition~\ref{def:eftrl1}) are presented. Internal reflexivity means that
  $g'\in\ebtrl(\rho)$ implies $\eftrl(g')\subseteq\rho$.

Finally (except for $\alpha=2^{3}$) a
  typical element $h\in\ostar(\rho)$ is shown and we explain what its
  elementary basic translations are and which elements belong to
  $\bDelta(h)$.  Transitivity means that
  $\ebtrl(h)\subseteq\rho$ implies $\bDelta(h)\subseteq\rho$.

 The labels (image values) for $g$, $g'$ and $h$ are clear from the 
  presentation of functions and therefore mostly are deleted.

\begin{figure}
\begin{center}
\includegraphics{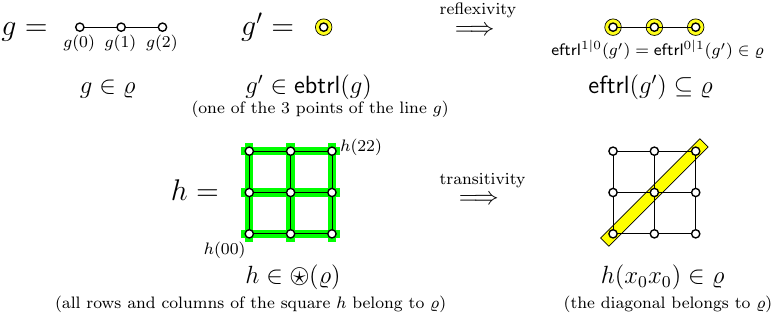}
\end{center}
\caption{Internal reflexivity and transitivity for an R-relation with rectangular arity $\alpha=3^{1}$\label{fig:tra1}}
\end{figure}

(1) \fbox{$\alpha=3^{1}$} (see Figure~\ref{fig:tra1}): $g:3^{1}\to A$ is just a ($1$-dimensional) triple
$(g(0),g(1),g(2))$, whose points are the elementary basic translations
$g'$ of $g$, the function $h:3^{2}\to A:(i,j)\mapsto h(i,j)$ is a $(3\times
3)$-square  
(matrix), whose rows $h(x,j)=(h(0,j),h(1,j),h(2,j))$ and columns
$h(i,y)=(h(i,0),h(i,1),h(i,2))$ are the elementary basic translations,
the set $\bDelta(h)$ here consists of a single diagonal which is the
triple $(h(0,0),h(1,1),h(2,2))$.

\begin{figure}
\includegraphics{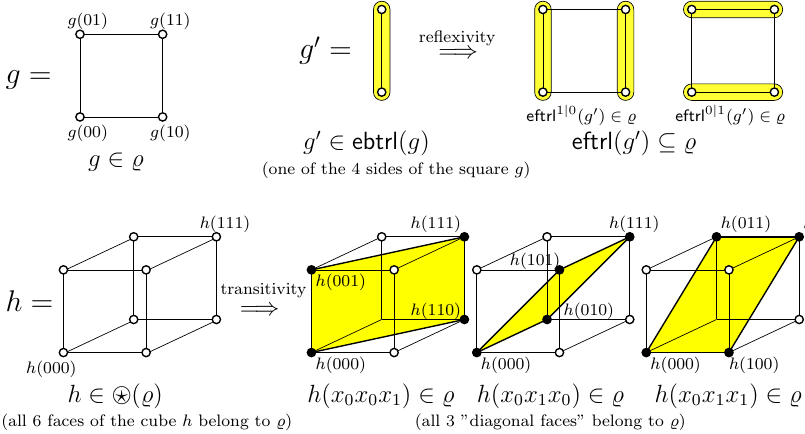}
\caption{Internal reflexivity and transitivity for an R-relation with rectangular arity
  $\alpha=2^{2}$ \label{fig:tra2}}
\end{figure}

(2) \fbox{$\alpha=2^{2}$} (see Figure~\ref{fig:tra2}): $g:2^{2}\to A$
is a $(2\times 2)$-square, whose sides are the elementary basic
translations $g'$ of $g$.
The function
$h:2^{3}\to A$ is a $(2\times 2\times 2)$ cube, the
elementary basic translations of $h$ are exactly the six faces of the
cube and therefore can be understood as possible elements of
$\rho$. According to Definition~\ref{def:Delta}, the set $\bDelta(h)$ consists of three diagonal faces,
namely $h(x_{0},x_{0},x_{1})$, $h(x_{0},x_{1},x_{0})$ and $h(x_{0},x_{1},x_{1})$.

(3) \fbox{$\alpha=2^{3}$} (see Figure~\ref{fig:tra3}): $g:2^{3}\to A$ is a
$(2\times 2\times 2)$-cube, whose elementary basic translations $g'$ are the six faces of $g$. We do not present here $h:2^{4}\to A$ as $4$-dimensional
hypercube, but restrict only to the property of internal
reflexivity.

\begin{figure}\begin{center}
\includegraphics{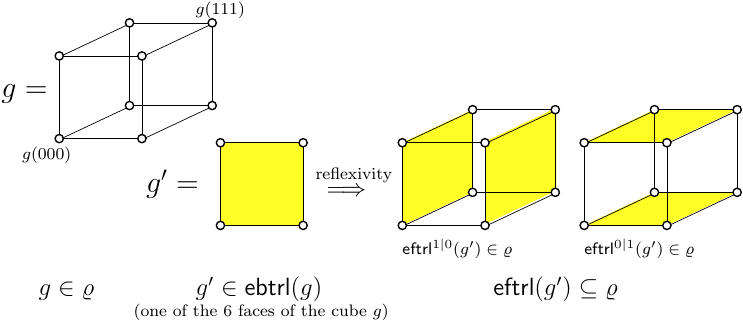}
\end{center}
\caption{Internal reflexivity for an R-relation with rectangular arity
  $\alpha=2^{3}$ \label{fig:tra3}}
\end{figure}

\end{example}

Clones containing constants provide many examples of reflexive
R-relations as the following lemma shows.

\begin{lemma}\label{lem:clonesGiveReflexive}
Let $F\leq \Op(A)$ be a clone, $\cC\subseteq F$ and $d\in\N_{+}$. Then
$F^{(d)}\subseteq A^{A^{d}}$, considered as R-relation of simple arity $A^{d}$, is
reflexive.
\end{lemma}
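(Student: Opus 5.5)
By Definition~\ref{Adef:Refl+Trans}\ref{Adef:reflexive}, we must show two things for $\rho:=F^{(d)}$ with $B=A$: that $\rho$ is internally reflexive, and that $\rho$ contains every constant function $c_a^{(d)}$ for $a\in A$. The second part is immediate, since $\cC\subseteq F$ and clones are closed under adding fictitious variables; hence $c_a^{(d)}\in F$ is $d$-ary and so lies in $F^{(d)}$.

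For internal reflexivity I use the equivalent form (i$'$) of Remark~\ref{rm:equivalentreflexivity}, namely $\ebtrl(\eftrl(\rho))\subseteq\rho$. By Lemma~\ref{lem:trldecomposition}\ref{trldecomposition-i}, $\ebtrl(\eftrl(\rho))=\rho\cup\eftrl(\ebtrl(\rho))$, so it suffices to show $\eftrl(\ebtrl(\rho))\subseteq\rho$. This holds trivially when $d=0$, but here $d\ge1$ anyway.

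Take $g\in\rho$ and an elementary basic translation
$g'(x_0,\dots,x_{d-2})=g(x_0,\dots,x_{i-1},c,x_i,\dots,x_{d-2})$ with $c\in A$. Its elementary fictitious translations $h$ are $d$-ary. Each is obtained from $g$ as follows: first shift one variable by a fictitious addition, then substitute the constant operation $c_c$ into one argument. Concretely,
$h(x_0,\dots,x_{d-1})=g(x_1,\dots,x_{i},c_c(x_0),x_{i+1},\dots,x_{d-1})$ for $\eftrl^{1|0}$, and similarly with $c_c(x_{d-1})$ for $\eftrl^{0|1}$. This is a composition of $g\in F$ with the unary constant $c_c\in F$ and projections, followed by permutation or identification of variables. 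Since $F$ is a clone containing $\cC$, we get $h\in F$, and as $h$ is $d$-ary, $h\in F^{(d)}=\rho$.

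The only delicate point is the bookkeeping showing that $h$ really is a clone term over $g$ and $c_c$. One way is to write $h=g(e_{1},\dots,e_{i},c_c\circ e_0,e_{i+1},\dots,e_{d-1})$ with $d$-ary projections $e_j$, and invoke closure of clones under general superposition.

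An alternative route avoids that computation entirely. Once $c_c(x_0)$ is placed at the substituted position, the variable $x_0$ is fictitious in $h$, so $h$ is simply $g$ with one argument replaced by a constant, taken up to fictitious variables. This fits directly within the closure properties available in Lemma~\ref{M5A}.
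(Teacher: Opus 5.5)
Your proof is correct and takes essentially the same approach as the paper. You verify internal reflexivity through condition (i$'$) of Remark~\ref{rm:equivalentreflexivity}, using that a clone containing $\cC$ is closed under adding fictitious variables and substituting constants, and you note that the constants $c_a^{(d)}$ lie in $F^{(d)}$. The only difference is that you spell out, via Lemma~\ref{lem:trldecomposition}\ref{trldecomposition-i} and the explicit superposition $h=g(e_1,\dots,e_i,c_c\circ e_0,e_{i+1},\dots,e_{d-1})$, the step the paper simply calls clear.
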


\begin{proof}
  By \ref{rm:equivalentreflexivity}(i'), $F^{(d)}$ is internally reflexive
  iff $\ebtrl(\eftrl(F^{(d)}))\subseteq F^{(d)}$. But the latter is clear since
  $F$ is a clone and therefore closed under adding fictitious
  variables and (because of $\cC\subseteq F$) substituting constants.
  Moreover, $F^{(d)}$ is reflexive since, by
  assumption, each constant function belongs to $F^{(d)}$. 
\end{proof}
 
\begin{lemma}\label{lm:adjointproperties}
  Let $\rho \subseteq A^{B^d}$ be an R-relation of
  arity $\alpha = B^d$. The following hold.
  \begin{enumerate}[label=\textup{(\roman*)}]
  \item \label{adjointprops1} If $\rho$ is internally reflexive, then
    $\ostar(\rho)$ and, in case $d\geq 1$, $\ebtrl(\rho)$ are
    internally reflexive. 

  \item \label{adjointprops2} If $\rho$ is transitive, then
    $\ostar(\rho)$ is transitive.

  \item \label{adjointprops3} If $\rho$ is a weak generalized
    quasiorder, then $\ostar(\rho)$ is a weak generalized quasi\-order.
  \end{enumerate}
\end{lemma}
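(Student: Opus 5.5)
Throughout we write $\sigma:=\ostar(\rho)\subseteq A^{B^{d+1}}$. Everything will follow from the adjunction \ref{ostar-1}/\ref{ostar-2}, the monotonicity of all operators, and the commutation rule \ref{lem:trldecomposition}\ref{trldecomposition-i}, $\ebtrl\circ\eftrl=\id\cup\eftrl\circ\ebtrl$. Part \ref{adjointprops3} is simply \ref{adjointprops1} and \ref{adjointprops2} combined, so the work lies in the first two parts.

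For \ref{adjointprops1} we use the form (i') of Remark~\ref{rm:equivalentreflexivity}. We must show $\ebtrl(\eftrl(\sigma))\subseteq\sigma$. By the rule above,
\[
\ebtrl(\eftrl(\sigma))=\sigma\cup\eftrl(\ebtrl(\sigma)).
\]
It therefore suffices to show $\eftrl(\ebtrl(\sigma))\subseteq\sigma$, which by the adjunction means $\ebtrl(\eftrl(\ebtrl(\sigma)))\subseteq\rho$. Apply the rule once more, with $\ebtrl(\sigma)$ in place of $\sigma$. This gives
\[
\ebtrl(\eftrl(\ebtrl(\sigma)))=\ebtrl(\sigma)\cup\eftrl(\ebtrl(\ebtrl(\sigma))).
\]
The first term lies in $\rho$ by the adjunction, since $\ebtrl(\sigma)\subseteq\rho$. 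For the second term, monotonicity gives $\ebtrl(\ebtrl(\sigma))\subseteq\ebtrl(\rho)$, and internal reflexivity of $\rho$ in the form (i'') gives $\eftrl(\ebtrl(\rho))\subseteq\rho$. Hence the second term also lies in $\rho$. The case $d=0$ needs a separate check: there (i'') is unavailable, but $\ebtrl(\rho)=\emptyset$, so the claim still holds.

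For $\ebtrl(\rho)$ with $d\ge1$, we need $\ebtrl(\eftrl(\ebtrl(\rho)))\subseteq\ebtrl(\rho)$. By (i''), $\eftrl(\ebtrl(\rho))\subseteq\rho$, and applying the monotone operator $\ebtrl$ to both sides gives exactly this.

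For \ref{adjointprops2} we need $\bDelta(\ostar(\sigma))\subseteq\sigma$, that is, $\ebtrl(\bDelta(k))\subseteq\rho$ for every $k\in\ostar(\sigma)$. The key step is a second commutation rule,
\[
\ebtrl(\bDelta(k))\subseteq\bDelta(\ebtrl(k))\quad\text{for } k\in A^{B^{d+2}}.
\]
To justify it, take $k'=k(\dots,x_i,\dots,x_i,\dots)$ and substitute a constant $c$ into one variable of $k'$. If that variable is not $x_i$, the substitution commutes with the identification. If it is $x_i$, then $c$ lands in both identified places, and we instead substitute $c$ into one place of $k$ first and then into the other place. That yields an element of $\ebtrl(\ebtrl(k))$ rather than of $\bDelta(\ebtrl(k))$.

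So the inclusion must be amended to
\[
\ebtrl(\bDelta(k))\subseteq\bDelta(\ebtrl(k))\cup\ebtrl(\ebtrl(k)).
\]
Both pieces land in $\rho$. First, $\ebtrl(k)\subseteq\sigma$ gives $\bDelta(\ebtrl(k))\subseteq\bDelta(\sigma)\subseteq\rho$ by transitivity of $\rho$. Second, $\ebtrl(\ebtrl(k))\subseteq\ebtrl(\sigma)\subseteq\rho$. Two details need care: the degenerate case where $k$ has too few variables for $\bDelta$ to be defined, and the reindexing of variables when verifying the case analysis. I expect this bookkeeping for \ref{adjointprops2} to be the main obstacle; the rest is formal adjunction manipulation.
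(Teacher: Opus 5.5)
Your proposal is correct and follows essentially the paper's argument: part (i) goes through the adjunction together with the equivalence of (i') and (i''), and part (ii) uses the same case split. In (ii), substituting a constant into the identified variable lands in $\ebtrl(\ebtrl(k))$; substituting into any other variable lands in $\bDelta(\ebtrl(k))\subseteq\bDelta(\ostar(\rho))$. The one cosmetic difference is in (i), where the paper bounds $\ebtrl(\eftrl(\ebtrl(\sigma)))\subseteq\ebtrl(\eftrl(\rho))\subseteq\rho$ directly by monotonicity and (i') for $\rho$, which avoids your second expansion and the separate $d=0$ check.
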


\begin{proof}\ref{adjointprops1}: 
Let $\rho \subseteq A^{B^d}$ be
  internally reflexive. We first show that 
  $\ostar(\rho)\subseteq A^{B^{d+1}}$ is also internally reflexive. 
So, using definition
  \ref{rm:equivalentreflexivity}(i''), we must demonstrate that
  \begin{align*}
    \eftrl(\ebtrl(\ostar(\rho))) \subseteq \ostar(\rho), 
   \text{ or equivalently, } \ebtrl(\eftrl(\ebtrl(\ostar(\rho)))) \subseteq \rho
  \end{align*}
In fact, by internal reflexivity of $\rho$ we have
$\ebtrl(\eftrl(\rho))\subseteq \rho$ (cf.\
\ref{rm:equivalentreflexivity}(i')) and by the kernel operator property
\ref{Galoisconnections}(3) we have $\ebtrl(\ostar(\rho))\subseteq\rho$,
 consequently we get
$\ebtrl(\eftrl(\ebtrl(\ostar(\rho))))\subseteq
\ebtrl(\eftrl(\rho))\subseteq\rho$ as desired. 

 Now we show that $\ebtrl(\rho)$ is also internally reflexive.
 Since $\rho$ is internally reflexive (and $d\geq 1$) we have
 $\eftrl(\ebtrl(\rho))\subseteq\rho$ by
 \ref{rm:equivalentreflexivity}(i''), consequently 
 $\ebtrl(\eftrl(\ebtrl(\rho)))\subseteq\ebtrl(\rho)$ which shows
 internal reflexivity of $\ebtrl(\rho)$ by definition
 \ref{rm:equivalentreflexivity}(i'). 

\ref{adjointprops2}: Take $\rho \subseteq A^{B^d}$ to be transitive. We show
  that $\ostar(\rho)\subseteq A^{B^{d+1}}$ is transitive. By \ref{Adef:Refl+Trans}\ref{Adef:transitive}, we need
  to demonstrate that
  $\mathbf{\Delta}(\ostar(\ostar(\rho))) \subseteq \ostar(\rho)$,
  which is equivalent to showing that
    $\ebtrl(\mathbf{\Delta}(\ostar(\ostar(\rho)))) \subseteq \rho$
    (cf.\ \ref{def:ostar}(1)).

  So, let $g \in \ostar(\ostar(\rho))\subseteq A^{B^{d+2}}$ and let
  $g'$ be the result of identifying two variables in $g$. We need to
  see that $\ebtrl(g') \subseteq \rho$.

  Without loss of generality, we suppose that these identified
  variables are the first two (the other cases are argued
  similarly). So,
  \[
    g'(x_0, \dots, x_d) = g(x_0, x_0, x_1, \dots, x_d).
  \]
  Let $h \in \ebtrl(g')$. The following case analysis proves that
  $h \in \rho$ which finishes the proof of $\ref{adjointprops2}$.

Case 1: $h$ is obtained from $g'$ by evaluating $x_0$ at a constant
$c\in B$. 

Then we obtain that
    $h(x_1, \dots, x_{d})= g(c,c,x_1, \dots, x_{d})$.  Thus
    $h\in\ebtrl(\ebtrl(g))\subseteq \rho$ (for the inclusion remember
    that $g \in \ostar(\ostar(\rho))$ and that $\ebtrl$ is the lower adjoint
    of $\ostar$) which implies $h \in \rho$.

Case 2: some variable of $g'$ other than $x_0$ is evaluated at a
    constant. 

Then we obtain
    \[
      h(x_0, \dots, x_{d-1}) = g(x_0, x_0, x_1, \dots, x_{i-1}, c ,
      x_i, \dots, x_{d-1}).
    \]
Therefore $h$ can also be obtained from $g$ by evaluating at first a variable
(which is neither the first nor the second) at a constant, which gives
a function, say $g''\in\ebtrl(g)$ and then identifying the first two
variables of $g'$. Note $g''\in\ostar(\rho)$ since
$g\in\ostar(\ostar(\rho))$. Consequently, $h\in
\bDelta(g'')\subseteq\bDelta(\ostar(\rho))
\subseteq_{\ref{Adef:Refl+Trans}\ref{Adef:transitive}} \rho$ (the latter
inclusion follow from the transitivity of $\rho$)
and we are done.

\ref{adjointprops3}: By definition this immediately follows from
\ref{adjointprops1} and \ref{adjointprops2}.
\end{proof}

We collect some more properties of the internally reflexive or
transitive R-relations. 

\begin{lemma}\label{lem:adjointproperty}
  Let $\rho\subseteq A^{\alpha}$, $\alpha=B^{d}$, $d\in\N$.
  \begin{enumerate}[label=\textup{(\roman*)}]
  \item \label{adj-i}
    If $\rho$ is internally reflexive, then $\trl_{d}(\rho)\subseteq\rho$,
    or more explicitly, if $g\in\rho$ and
    $g'\in A^{B^{d}}$ is given by 
    \begin{align*}
       g'(x_{0},\dots,x_{d-1}):=g(\underbracket{\dots}_{c\in B},x_{u},\underbracket{\dots}_{c\in B},x_{u+1},\underbracket{\dots}_{c\in B},\dots,x_{u+t-1},\underbracket{\dots}_{c\in B})
    \end{align*} (for some $u,v,t\in\N$ with
$t\in\{0,\dots,d\}$ and $u+v=d-t$, cf.~{\rm\ref{trl-1}}) then
$g'\in\rho$. 

\item \label{adj-ii} $\trl_{d}(\rho)=\bigcup_{k=0}^{d}\eftrl^{k}(\ebtrl^{k}(\rho))$ is
  the internal reflexive closure of $\rho$, i.e., the least internally
  reflexive R-relation containing $\rho$. In particular, $\rho$
  is internally reflexive if and only if $\trl_{d}(\rho)=\rho$.

  \item \label{adj-iii} If $\rho$ is transitive, then
  $\bDelta^{k}(\ostar^{k}(\rho))\subseteq \rho$ for $k\in\N_{+}$.

\item \label{adj-iv} If $\rho$ is a weak generalized quasiorder
  (i.e., internally reflexive and transitive), then $g\in\rho$ implies
  $g^{\pi}\in\rho$ for each mapping (in particular for each permutation)
  $\pi:\{0,1,\dots,d-1\}\to\{0,1,\dots,d-1\}$ where $g^{\pi}$ is
  given by
  \begin{align*}
    g^{\pi}(x_{0},\dots,x_{d-1}):=g(x_{\pi(0)},\dots,x_{\pi(d-1)})
\quad\text{(cf.~\ref{def:minion})}.
  \end{align*}
Equivalently, we have $\rho\circ\mu_{\pi}\subseteq\rho$ for all arity
  transformations (cf.~\ref{def:aritymap}) with 
$\mu_{\pi}:\alpha\to\alpha:(x_{0},\dots,x_{d-1}) 
                \mapsto (x_{\pi(0)},\dots,x_{\pi(d-1)})$,
since $g^{\pi}=g\circ\mu_{\pi}$.


  \end{enumerate}

\end{lemma}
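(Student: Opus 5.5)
The plan is to treat the four items in order, since (i) and (ii) use only internal reflexivity, (iii) uses only transitivity, and (iv) combines them.

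\textbf{Items (i) and (ii).} By Lemma~\ref{lem:trldecomposition}\ref{trldecomposition-iii} we have
\[\trl_{d}(\rho)=\bigcup_{t=0}^{d}\eftrl^{d-t}(\ebtrl^{d-t}(\rho)),\]
which is the displayed union with $k=d-t$. For (i), induct on $k$ to show $\eftrl^{k}(\ebtrl^{k}(\rho))\subseteq\rho$. The case $k=0$ is trivial. For the inductive step, write
\[\eftrl^{k+1}\ebtrl^{k+1}=\eftrl^{k}\bigl(\eftrl\,\ebtrl\bigr)\ebtrl^{k}.\]
Then apply (i'') of Remark~\ref{rm:equivalentreflexivity} to $\ebtrl^{k}(\rho)$, which is internally reflexive by iterating Lemma~\ref{lm:adjointproperties}\ref{adjointprops1}. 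This gives
\[\eftrl^{k+1}\ebtrl^{k+1}(\rho)\subseteq\eftrl^{k}\ebtrl^{k}(\rho)\subseteq\rho.\]
The explicit formula in (i) is then just~\ref{trl-1}.

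For (ii), $\trl_d(\rho)$ contains $\rho$ (take $k=0$), and by (i) it lies inside every internally reflexive relation containing $\rho$. It remains to show that $\trl_d(\rho)$ is itself internally reflexive. Use (i'): a $d$-ary element of $\ebtrl(\eftrl(\trl(\rho)))$ is again a translation of $\rho$, because $\trl(\rho)$ is closed under $\btrl$ and $\ftrl$ (Definition~\ref{def:trl1}). Hence it lies in $\trl_d(\rho)$. The ``in particular'' clause follows immediately.

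\textbf{Item (iii).} Induct on $k$; the case $k=1$ is the definition of transitivity. By Lemma~\ref{lm:adjointproperties}\ref{adjointprops2}, $\ostar(\rho)$ is transitive. The induction hypothesis applied to $\ostar(\rho)$ gives
\[\bDelta^{k}\ostar^{k}(\ostar(\rho))\subseteq\ostar(\rho).\]
Applying the monotone operator $\bDelta$ and transitivity of $\rho$ then yields
\[\bDelta^{k+1}\ostar^{k+1}(\rho)\subseteq\bDelta\ostar(\rho)\subseteq\rho.\]

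\textbf{Item (iv), the main step.} Fix $g\in\rho$ and $\pi$, and set $h:=g\circ\sigma$, where $\sigma:B^{2d}\to B^{d}$ sends $(y_0,\dots,y_{2d-1})$ to $(y_{d},\dots,y_{2d-1})$. That is, $h\in\eftrl^{d}(\rho)$ is $g$ with $d$ fictitious variables prepended.

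The first claim is $h\in\ostar^{d}(\rho)$, i.e.\ $\ebtrl^{d}(h)\subseteq\rho$. Any $d$ constant substitutions into $h$ give a function of the form~\ref{trl-1} obtained from $g$: the substitutions that hit the first $d$ places simply disappear, and the remaining ones fix variables of $g$, after which some fictitious variables are prepended. Such a function lies in $\rho$ by (i).

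Next, $g^{\pi}$ is obtained from $h$ by identifying variable $y_{d+j}$ with $y_{\pi(j)}$ for each $j$, which is a sequence of $d$ successive identifications. The delicate point is that each identification $\bDelta$ merges two adjacent-index variables of the current function and renumbers the rest. I would choose the order carefully: identify the last variable $y_{2d-1}$ with $y_{\pi(d-1)}$ first, then $y_{2d-2}$, and so on. Since all $y_{d+j}$ lie after every $y_i$ with $i<d$, each step has the shape $x_i,\dots,x_{j-1},x_i$ required in Definition~\ref{def:Delta}. After the $d$ steps the variables $y_0,\dots,y_{d-1}$ remain in order, and the result is exactly $g^{\pi}$.

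Hence $g^{\pi}\in\bDelta^{d}(\ostar^{d}(\rho))\subseteq\rho$ by (iii). The case $d=0$ is trivial. The arity-transformation reformulation $\rho\circ\mu_\pi\subseteq\rho$ is immediate from $g^{\pi}=g\circ\mu_{\pi}$.
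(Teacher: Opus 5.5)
Your proposal is correct and follows the paper's proof essentially step for step. It uses the same induction on $k$ for (i), the same closure argument via (i') for (ii), and for (iv) the same auxiliary $h\in\eftrl^{d}(g)$ with $\ebtrl^{d}(h)\subseteq\trl_{d}(g)\subseteq\rho$, followed by the identifications $x_{d+i}\sim x_{\pi(i)}$ in the order $i=d-1,\dots,0$. The only deviation is cosmetic: in (iii) you apply the induction hypothesis to the transitive relation $\ostar(\rho)$ and use transitivity of $\rho$ last, whereas the paper uses transitivity of $\ostar^{k}(\rho)$ first and the hypothesis for $\rho$ last; both rely on Lemma~\ref{lm:adjointproperties}\ref{adjointprops2}, which you cite correctly.
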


\begin{proof} 
\ref{adj-i}: Let $h\in\trl_{d}(\rho)$. From
Lemma~\ref{lem:trldecomposition} we can conclude
$h\in\eftrl^{k}(\ebtrl^{k}(\rho))$ for some $k\in\{0,1,\dots,d\}$
($k=0$ means $h\in\rho$). We are going to
show $\eftrl^{k}(\ebtrl^{k}(\rho))\subseteq\rho$ (by induction on $k$)
which gives $h\in\rho$ and will finish the proof. 

In fact, for $k=1$ we have
$\eftrl(\ebtrl(\rho))\subseteq\rho$ by
\ref{rm:equivalentreflexivity}(i'') since $\rho$ is internally reflexive. Assume
$\eftrl^{k}(\ebtrl^{k}(\rho))\subseteq\rho$ for
$k\in\{1,\dots,d-1\}$. Since $\ebtrl^{k}(\rho)$ is internally
reflexive by Lemma~\ref{lm:adjointproperties}\ref{adjointprops1} we
have
\begin{align*}
  \eftrl^{k+1}(\ebtrl^{k+1}(\rho))&=
\eftrl^{k}(\eftrl(\ebtrl(\ebtrl^{k}(\rho))))\\
&\subseteq_{\text{\ref{rm:equivalentreflexivity}(i'')}}
\eftrl^{k}(\ebtrl^{k}(\rho))\subseteq\rho.
\end{align*}

\ref{adj-ii}: The characterization of $\trl_{d}(\rho)$ as join of
$\eftrl^{k}(\ebtrl^{k}(\rho))$ as well as
$\rho\subseteq\trl_{d}(\rho)$ follows from
Lemma~\ref{lem:trldecomposition}\ref{trldecomposition-iii} (for
$n=d$ and $k=0$). It remains to show that $\trl_{d}(\rho)$ is the
least internally reflexive R-relation containing $\rho$. In fact,
$\ebtrl(\eftrl(\trl_{d}(\rho)))$ is contained in $ \trl(\rho)$ (by
definition~\ref{def:trl1}) and consists of $d$-ary functions (by
definition of elementary translations), thus
$\ebtrl(\eftrl(\trl_{d}(\rho)))\subseteq\trl_{d}(\rho)$ which shows
that $\trl_{d}(\rho)$ is internally reflexive (by
Remark~\ref{rm:equivalentreflexivity}(i')). Moreover, if
$\sigma\subseteq A^{B^{d}}$ is internally reflexive and contains
$\rho$, then 
$\trl_{d}(\rho)\subseteq\trl_{d}(\sigma)\subseteq_{\ref{adj-i}}\sigma$.

\ref{adj-iii}: We prove the statement by induction on $k$. For $k=1$ we
have $\bDelta(\ostar(\rho))\subseteq\rho$ by definition of
transitivity (cf.~\ref{Adef:Refl+Trans}\ref{Adef:transitive}). By 
Lemma~\ref{lm:adjointproperties}\ref{adjointprops1},
$\ostar^{k}(\rho)$ is transitive for $k\in\N_{+}$. Thus
\begin{align*}
  \bDelta^{k+1}(\ostar^{k+1}(\rho))=\bDelta^{k}(\bDelta(\ostar(\ostar^{k}(\rho))))
  \subseteq_{\text{\ref{Adef:Refl+Trans}\ref{Adef:transitive}}}\bDelta^{k}(\ostar^{k}(\rho))\subseteq \rho,
\end{align*}

the last inclusion holds by induction hypothesis.

\ref{adj-iv}: Let $g\in\rho$ and define $h:B^{2d}\to A$ by 
\begin{align*}
     h(x_{0},\dots,x_{d-1},x_{d},\dots,x_{2d-1}):=g(x_{d},\dots,x_{2d-1}). 
\end{align*}
Clearly $h\in\eftrl^{d}(g)$. Thus 
$\ebtrl^{d}(h)\subseteq\ebtrl^{d}\eftrl^{d}(g)\subseteq_{\ref{adj-ii}}
\trl_{d}(g)\subseteq\trl_{d}(\rho)\subseteq_{\ref{adj-i}}\rho$, 
which  implies $h\in\ostar^{d}(\rho)$ since $\ebtrl$
is the lower adjoint of $\ostar$ (cf.\
\ref{rm:equivalentreflexivity}(i')). Moreover we have  $g^{\pi}\in\bDelta^{d}(h)$
(in fact, $g^{\pi}$ can be obtained from $h$ by identifying $x_{d+i}$ with
$x_{\pi(i)}$  for $i=d-1,\dots,1,0$).
Consequently, $g^{\pi}\in\bDelta^{d}(h)\subseteq
\bDelta^{d}(\ostar^{d}(\rho))\subseteq_{\ref{adj-iii}}\rho$, which is what was
to be shown.
\end{proof}

There is another adjoint pair of operators that is relevant for
us. Indeed, we remarked immediately after Definition~\ref{MstarPoe}
that $M^{*_d}$ for a $d$-ary clone fragment $M$ is just the image of
$M$ under the upper adjoint to $\trl$. We have generalized $\trl$ to
arity relations of simple arity $\rho$, hence we get the following monotone Galois
connection. 

\begin{definition}\label{def:star}
 Let $d\in\N$. According to \ref{Galoisconnections}, the mapping
  $\trl_{d}:A^{B^{n}}\to\bigcup_{n\in\N}A^{B^{d}}:g\mapsto\trl_{d}(g)$ (cf.~\ref{def:trl1}) induces a
  monotone Galois connection, whose operators are given by 
\begin{align*}
  \trl_d:& \Pow(\bigcup_{n \in \N} A^{B^n}) \to \Pow(A^{B^d}):
  X \mapsto \trl_d(X), \text{ and}\\
  ^{*_d}:& \Pow(A^{B^d}) \to \Pow(\bigcup_{n \in \N} A^{B^n}):
  \rho \mapsto  \rho^{*_d},\quad
     \rho^{*_d}:=\{ g \in \bigcup_{n \in \N} A^{B^n}\mid  \trl_d(g) \subseteq \rho \}.
\end{align*}
It is clear
(since $\trl_{d}(\trl(g))=\trl_{d}(g)$)
that $\rho^{*_d}$ is closed under all translations, i.e.,
$\trl(\rho^{*_d}) \subseteq \rho^{*_d}$.

\end{definition}

\begin{definition}\label{doublearrow} 
Let $d\in\N$ and $\rho\subseteq A^{B^{d}}$. For $n\in\N$ we define the
R-relations $\rho_{n}\in A^{B^{n}}$ as follows:
 \begin{align*}
  \rho_{n}= \begin{cases}
    \ostar^{n-d}(\rho) & \text{ if $ n > d$},\\
    \ebtrl^{d-n}(\rho)  &\text{ if $n \leq d$}.
  \end{cases}
&&\text{Further we set $\rho^{\leftrightarrow}:=\bigcup_{n\in\N}\rho_{n}$.}
\end{align*}
Note $\rho_{d}=\rho$, $\rho_{n+1}=\ostar(\rho_{n})$ if $n\geq d$, and
$\rho_{n-1}=\btrl(\rho_{n})$ if $1\leq n\leq d$.
\end{definition}

\begin{proposition}\label{prop:doublearrowisminion} 
Let $\rho$ be an R-relation  of
  simple arity $\alpha=B^{d}$, $d\in\N_{+}$.
The following are equivalent: 
  \begin{enumerate}[label=\textup{(\alph*)}]
  \item\label{doublearrowisminion-a} $\rho^{\leftrightarrow}$ is a minion.
  \item\label{doublearrowisminion-b} $\rho$ is a weak generalized quasiorder.
  \end{enumerate}
\end{proposition}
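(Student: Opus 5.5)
The plan is to check both directions against the generating operations of minors. Minors are generated by three kinds of steps: permutations of variables inside a fixed arity, adding a fictitious variable (say $\eftrl^{1|0}$ or $\eftrl^{0|1}$, from arity $n$ to $n+1$), and identifying two variables (the operator $\bDelta$, from arity $n+1$ to $n$). Throughout I use $\rho^{\leftrightarrow}\cap A^{B^{n}}=\rho_{n}$, since the $\rho_n$ live in disjoint arities.

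\emph{(a)$\Rightarrow$(b).} Assume $\rho^{\leftrightarrow}$ is a minion.
\begin{itemize}
\item Internal reflexivity: adding a fictitious variable is a minor, so $\eftrl(\rho)\subseteq\rho^{\leftrightarrow}\cap A^{B^{d+1}}=\rho_{d+1}=\ostar(\rho)$.
\item Transitivity: identifying two variables is a minor, so $\bDelta(\ostar(\rho))=\bDelta(\rho_{d+1})\subseteq\rho^{\leftrightarrow}\cap A^{B^{d}}=\rho$.
\end{itemize}
This direction is immediate from the definitions.

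\emph{(b)$\Rightarrow$(a).} Assume $\rho$ is a weak generalized quasiorder. I show, for every $n$: (1) $\rho_n$ is closed under all maps $\pi:n\to n$; (2) $\eftrl(\rho_n)\subseteq\rho_{n+1}$; (3) $\bDelta(\rho_{n+1})\subseteq\rho_n$. Together these give closure under all minors.

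For $n\ge d$: iterating Lemma~\ref{lm:adjointproperties}\ref{adjointprops3} shows that $\rho_n=\ostar^{n-d}(\rho)$ is a weak generalized quasiorder. Then (1) is Lemma~\ref{lem:adjointproperty}\ref{adj-iv}, (2) is internal reflexivity of $\rho_n$ (as $\rho_{n+1}=\ostar(\rho_n)$), and (3) is transitivity of $\rho_n$.

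For $n<d$: iterating Lemma~\ref{lm:adjointproperties}\ref{adjointprops1} (using $d\ge1$) shows each $\rho_{m}=\ebtrl^{d-m}(\rho)$ with $m\ge1$ is internally reflexive. Then (2) follows from Remark~\ref{rm:equivalentreflexivity}(i'') applied to $\rho_{n+1}$: $\eftrl(\rho_n)=\eftrl(\ebtrl(\rho_{n+1}))\subseteq\rho_{n+1}$. This also covers $n=0$, where constants of the image are produced.

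\emph{Main obstacle.} The step I expect to be hardest is (1) and (3) for $n<d$, since transitivity is not directly available for $\ebtrl(\rho)$. I would lift to $\rho$ and use Lemma~\ref{lem:adjointproperty}\ref{adj-iv} there. Let $h\in\rho_{m}$ with $m\le d$; then $h$ arises from some $g\in\rho$ by fixing $d-m$ variables at constants. Suppose we want the identification (or permutation) $h'$ of $h$ that sends variable $x_j$ to $x_i$. Take the map $\pi:d\to d$ that acts this way on the corresponding free positions of $g$ and is the identity elsewhere. Then $g^{\pi}\in\rho$ by \ref{adj-iv}. Substituting in $g^{\pi}$ the same constants as before yields $h^{\pi}$ again in $\rho_{m}$, which settles (1). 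In the identification case, additionally substitute an arbitrary constant at the position that has become fictitious in $g^{\pi}$; this yields exactly $h'\in\ebtrl^{d-m+1}(\rho)=\rho_{m-1}$, which is (3).

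The remaining bookkeeping is to check that substituting constants commutes with such a $\pi$ on the untouched positions. This is routine, as in the proof of Lemma~\ref{lm:adjointproperties}\ref{adjointprops2}. Since every minor $g^{\pi}$ is a composite of these three kinds of steps, $\rho^{\leftrightarrow}$ is a minion.
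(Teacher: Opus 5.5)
Your proof is correct. Direction (a)$\Rightarrow$(b) and the upper part $n\ge d$ coincide with the paper's argument; the two proofs differ only in how the lower part $n\le d$ is handled.

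The paper \emph{lifts upward}. From $g\in\rho_n$ it appends $d-n+1$ fictitious variables to land in $\rho_{d+1}$, which first requires knowing that $\rho^{\leftrightarrow}$ is closed under $\eftrl$ across arity $d$. It then performs the permutation or identification there, where the weak generalized quasiorder properties are available. Finally it fixes the added variables at constants. The count $d-n+1$ is chosen so that even an identification stays in the upper part when $n=d$.

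You instead \emph{pull back}. You pick a preimage $g\in\rho$ of $h$ under $\ebtrl^{d-m}$ and realize the desired operation inside $\rho$ by a map $\pi\colon d\to d$ that fixes the constant positions. This uses the fact that Lemma~\ref{lem:adjointproperty}\ref{adj-iv} covers non-injective maps. You then resubstitute the same constants, plus one at the freed, now fictitious, position in the identification case.

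Your route never leaves arities $\le d$ for the lower part. It therefore avoids both the cross-boundary closure step and the counting of fictitious variables, and it handles arbitrary maps $n\to n$ directly rather than only permutations. The cost is explicit index bookkeeping and a non-canonical choice of preimage. The paper's route is more uniform: every operation is carried out where the structure is inherited via $\ostar$, and only the descent by constant substitution needs checking.
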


\begin{proof}

\ref{doublearrowisminion-a}$\Rightarrow$\ref{doublearrowisminion-b}:
A minion is closed under adding fictitious variables and under
identification of variables. Thus
$\eftrl(\rho)=\eftrl(\rho_{d})\subseteq\rho_{d+1}=\ostar(\rho)$
and $\bDelta(\ostar\rho)=\bDelta(\rho_{d+1})\subseteq\rho_{d}=\rho$
showing internal reflexivity
(cf.~\ref{Adef:Refl+Trans}\ref{Adef:internally-reflexive}) and
transitivity (cf.~\ref{Adef:Refl+Trans}\ref{Adef:transitive}).

\ref{doublearrowisminion-b}$\Rightarrow$\ref{doublearrowisminion-a}:
For $\rho^{\leftrightarrow}$ being a minion, it is enough to show that
$\eftrl(\rho_{n})\subseteq\rho_{n+1}$,
$\bDelta(\rho_{n})\subseteq\rho_{n-1}$ and that each $\rho_{n}$ is closed
under permutation of variables (i.e., $g^{\pi}\in\rho_{n}$ for
$g\in\rho_{n}$ and a permutation $\pi:n\to n$,
cf.~\ref{lem:adjointproperty}\ref{adj-iv}), clearly then it is also
closed under arbitrary adding fictitious variables and identification
of variables and thus a minion (cf.~\ref{def:minion}).

From Lemma~\ref{lm:adjointproperties}\ref{adjointprops1}
follows that all $\rho_{n}$ are internally
reflexive and, for $n\geq d$, also transitive by
Lemma~\ref{lm:adjointproperties}\ref{adjointprops2}. 
  
Thus, for $n\geq d$, we have $\bDelta(\rho_{n+1})=\bDelta(\ostar(\rho_{n}))\subseteq\rho_{n}$
(cf.~\ref{Adef:Refl+Trans}\ref{Adef:transitive}), moreover,
$\eftrl(\rho_{n})\subseteq\ostar(\rho_{n})=\rho_{n+1}$ 
(cf.~\ref{Adef:Refl+Trans}\ref{Adef:internally-reflexive}) 
and, by Lemma~\ref{lem:adjointproperty}\ref{adj-iv},
each $\rho_{n}$ is closed under permutation of variables.
 Consequently, the
``upper'' part $\bigcup_{n\geq d}\rho_{n}$ of $\rho^{\leftrightarrow}$ satisfies all
conditions for being a minion.

All conditions for the ``lower'' part $\bigcup_{n\leq d}\rho_{n}$ of
$\rho^{\leftrightarrow}$ can be checked by ``lifting'' to the upper
part with $\eftrl$ and ``returning'' to the lower part with $\ebtrl$.

For this we need to know that $\rho^{\leftrightarrow}$ is closed under
$\ebtrl$ and $\eftrl$ which immediately follows from the definition of
$\rho^{\leftrightarrow}$ using internal reflexivity (e.g., for $n<d$, $\eftrl(\rho_{n})=\eftrl(\ebtrl(\rho_{n-1}))\subseteq\trl_{d}(\rho_{n+1})
\subseteq_{\ref{lem:adjointproperty}\ref{adj-i}}\rho_{n+1}$) and the
adjointness properties of $\btrl$ and $\ostar$ (e.g., for $n\geq d$, $\ebtrl(\rho_{n+1})\subseteq\rho_{n}$).

Thus, starting with a function $g\in\rho_{n}$ ($n\leq d$) we get an
function $\tilde g\in\rho_{d+1}$ by appending $d-n+1$
fictious variables, i.e., an element in the upper part.
Thus $\eftrl(\tilde g)$, $\tilde g^{\tilde\pi}$ (for a permutation
$\tilde\pi:d+1\to d+1$ whose restriction to $n$ is a permutation 
$\pi:n\to n$) and $\bDelta(\tilde g)$
belong to the upper part (we have chosen $d-n+1$ many fictitious
variables in order to ensure 
that $\bDelta(\tilde g)$ is in the upper part
also for $n=d$). Now one can substitute each originally prepended
fictitious variable by a constant (i.e., we apply $\ebtrl^{d-n+1}$) and we get
that the corresponding functions $\eftrl(g)$, $g^{\pi}$ (for a
permutation $\pi:n\to n$) and $\bDelta(g)$
also belong to $\rho^{\leftrightarrow}$.
\end{proof}

\begin{proposition}\label{prop:starcharacterization}
  Let $\rho\subseteq A^{B^{d}}$ be internally reflexive. Then $\rho^{*_{d}}=\rho^{\leftrightarrow}$.
\end{proposition}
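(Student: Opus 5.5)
The plan is to compare $\rho^{*_d}$ and $\rho^{\leftrightarrow}$ arity by arity. For each $n\in\N$, I will show that a function $g\in A^{B^{n}}$ satisfies $\trl_d(g)\subseteq\rho$ if and only if $g\in\rho_n$. Throughout, the only consequence of internal reflexivity I will use is Lemma~\ref{lem:adjointproperty}\ref{adj-ii}, which gives $\trl_d(\rho)=\bigcup_{k=0}^{d}\eftrl^{k}(\ebtrl^{k}(\rho))=\rho$. I also need the fact that translations of translations are translations, i.e.\ $\trl(\trl(X))=\trl(X)$, which holds by Definition~\ref{def:trl1}.

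\emph{Case $n\le d$}, where $\rho_n=\ebtrl^{d-n}(\rho)$.
\begin{itemize}
\item[$\subseteq$] If $g\in\rho_n$, then $g\in\btrl(h)$ for some $h\in\rho$. Hence $\trl_d(g)\subseteq\trl_d(h)\subseteq\trl_d(\rho)=\rho$, so $g\in\rho^{*_d}$.
\item[$\supseteq$] Suppose $\trl_d(g)\subseteq\rho$. Let $h:=\eftrl^{0|d-n}(g)$, obtained by appending $d-n$ fictitious variables. Then $h\in\ftrl_d(g)\subseteq\trl_d(g)\subseteq\rho$. Now substitute an arbitrary constant from $B$ (possible since $B\neq\emptyset$) for each of the $d-n$ appended variables. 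This recovers $g$, so $g\in\ebtrl^{d-n}(h)\subseteq\rho_n$.
\end{itemize}
The subcase $n=0$ is included here, with $\rho_0=\Ima\rho$ viewed as nullary functions.

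\emph{Case $n>d$}, where $\rho_n=\ostar^{n-d}(\rho)$.

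The first step is to unfold the iterated upper adjoint. By induction on $k$, using \ref{ostar-2} at each step together with $\ebtrl^{k+1}(g)=\ebtrl^{k}(\ebtrl(g))$, we get
\[
\ostar^{k}(\sigma)=\{g\mid \ebtrl^{k}(g)\subseteq\sigma\}.
\]
Hence $g\in\rho_n$ if and only if $\btrl_d(g)=\ebtrl^{n-d}(g)\subseteq\rho$.

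It remains to show that $\btrl_d(g)\subseteq\rho$ if and only if $\trl_d(g)\subseteq\rho$. The direction from $\trl_d$ to $\btrl_d$ is trivial, because $\btrl_d(g)\subseteq\trl_d(g)$.

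For the other direction, use Lemma~\ref{lem:trldecomposition}\ref{trldecomposition-iii}:
\[
\trl_d(g)=\bigcup_{t=0}^{d}\eftrl^{d-t}(\ebtrl^{n-t}(g)).
\]
Splitting the basic translations as $\ebtrl^{n-t}(g)=\ebtrl^{d-t}(\ebtrl^{n-d}(g))\subseteq\ebtrl^{d-t}(\rho)$, we obtain
\[
\trl_d(g)\subseteq\bigcup_{t}\eftrl^{d-t}(\ebtrl^{d-t}(\rho))=\trl_d(\rho)=\rho,
\]
so $g\in\rho^{*_d}$.

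The step needing the most care is this last one: the bookkeeping that a $d$-translation of an $n$-ary $g$ with $n>d$ factors through a basic $d$-translation of $g$. This is exactly what Lemma~\ref{lem:trldecomposition} provides, since any $n-t\ge n-d$ substituted places can be performed in two stages, first $n-d$ places and then $d-t$ more. The case distinction $n\le d$ versus $n>d$ matches the two-piece Definition~\ref{doublearrow}, with the case $n=d$ covered in the first case.
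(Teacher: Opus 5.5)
Your proposal is correct and follows essentially the same route as the paper. Both split on $n\le d$ versus $n>d$. For $n\le d$ you append $d-n$ fictitious variables and substitute them back. For $n>d$ you factor $\ebtrl^{n-t}(g)=\ebtrl^{d-t}(\ebtrl^{n-d}(g))$ via Lemma~\ref{lem:trldecomposition}\ref{trldecomposition-iii} and conclude with $\trl_d(\rho)=\rho$. Your only addition is an explicit induction unfolding $\ostar^{k}(\sigma)=\{g\mid \ebtrl^{k}(g)\subseteq\sigma\}$, which the paper leaves implicit.
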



\begin{proof} Let $g\in A^{B^{n}}$. Recall
  that $g\in\rho^{*_{d}}$ 
  means $\trl_{d}(g)\subseteq \rho$. We distinguish two cases.

Case 1: $n\leq d$: We have to show $\trl_{d}(g)\subseteq \rho\iff
g\in\rho_{n}=\ebtrl^{d-n}(\rho)$.

To see ``$\Longrightarrow$'', assume $\trl_{d}(g)\subseteq \rho$ and
consider $h\in\eftrl^{d-n}(g)$ given by appending $d-n$ ficticious variables:
\begin{align*}
  h(x_{0},\dots,x_{n-1},x_{n},\dots,x_{d-1}):=g(x_{0},\dots,x_{n-1},x_{n}). 
\end{align*}
These fictitious variables can be evaluated at constants, which shows
\begin{align*}
  g\in\ebtrl^{d-n}(h)\in \ebtrl^{d-n}(\eftrl^{d-n}(g))
\subseteq\ebtrl^{d-n}(\trl_{d}(g))\subseteq\ebtrl^{d-n}(\rho)
\end{align*}
 and we are done (for $n=d$ some parts become trivial because then
 $h=g$).

For ``$\Longleftarrow$'', $g\in\ebtrl^{d-n}(\rho)$ implies
  $\trl_{d}(g)\subseteq\trl_{d}(\ebtrl^{d-n}(\rho))\subseteq\trl_{d}(\rho)
 =_{\ref{lem:adjointproperty}\ref{adj-ii}}\rho$.

Case 2: $n>d$: We have to show $\trl_{d}(g)\subseteq \rho\iff
g\in\rho_{n}=\ostar^{n-d}(\rho)$, or equivalently 
 $\trl_{d}(g)\subseteq \rho\iff \ebtrl^{n-d}(g)\subseteq\rho$ (cf.~\ref{ostar-2}).

In fact, ``$\Longrightarrow$'' follows immediately from
  $\ebtrl^{n-d}(g)\subseteq\trl_{d}(g)$.

To see ``$\Longleftarrow$'', assume $\ebtrl^{n-d}(g)\subseteq\rho$ and
let $h\in\trl_{d}(g)$. We have to show $h\in\rho$. According to 
\ref{lem:trldecomposition}\ref{trldecomposition-iii} there is some
$t\in\{0,\dots,d\}$ such that we get
\begin{align*}
  h\in\eftrl^{d-t}(\ebtrl^{n-t}(g))&=\eftrl^{d-t}(\ebtrl^{d-t}(\ebtrl^{n-d}(g)))\\&\subseteq\eftrl^{d-t}(\ebtrl^{d-t}(\rho))\subseteq\trl_{d}(\rho)
=_{\ref{lem:adjointproperty}\ref{adj-ii}}\rho. \tag*{\qed}
\end{align*}
\renewcommand{\qedsymbol}{}
\end{proof}

We still need the following definition and lemma which are crucial for the proof of
the Theorems~\ref{thm:PolofGQuordMain} and \ref{thm:PolofGQuordMain2}.

\begin{definition}\label{def:SigmaofTuples}
 Let $f:A^{n}\to A$ and let $(g_{0},\dots,g_{n-1}) \in \left(A^{\alpha}\right)^n$, for $\alpha = B^d$. We define the following sets:
 \begin{align*}
   \Sigma(g_{0},\dots,g_{n-1})&:=\{(g'_{0},\dots,g'_{n-1})\mid
\ g'_{0} \in  \trl_{d}\{g_{0} \}, \dots, g'_{n-1}\in
                                \trl_{d}\{g_{n-1}\} \\
     &\text{\hspace*{12.5ex} and at most $d$ many $g'_{i}$'s are nonconstant}\},\\
f(\Sigma(g_{0},\dots,g_{n-1}))&:=\{f(g'_{0},\dots,g'_{n-1})\mid
   (g'_{0},\dots,g'_{n-1})\in \Sigma(g_{0},\dots,g_{n-1})\}.
 \end{align*}
\end{definition}

\begin{lemma}\label{lem:PolofGQuord}
  Let $\rho \subseteq A^\alpha$ be a transitive R-relation of arity $\alpha=B^{d}$, let $f:A^{n}\to A$, and let $(g_{0},\dots,g_{n-1})\in \rho^n$. 

Then we have:
  $f(\Sigma(g_{0},\dots,g_{n-1}))\subseteq\rho
               \implies f(g_{0},\dots,g_{n-1})\in\rho$.
\end{lemma}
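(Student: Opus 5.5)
The plan is to build one large function $H\colon B^{nd}\to A$ that encodes $f(g_0,\dots,g_{n-1})$ as an iterated diagonal. Then transitivity, in the form of Lemma~\ref{lem:adjointproperty}\ref{adj-iii}, will put that diagonal into $\rho$.

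Define
\[
H(X_0,\dots,X_{n-1}):=f(g_0(X_0),\dots,g_{n-1}(X_{n-1})),
\]
where each $X_i=(x_{id},\dots,x_{id+d-1})\in B^d$ is its own block of $d$ variables. Identifying, for each $j<d$, the $j$-th variables of all blocks with one another gives exactly $f(g_0,\dots,g_{n-1})$. This takes $(n-1)d$ single identifications, so $f(g_0,\dots,g_{n-1})\in\bDelta^{(n-1)d}(H)$. If we show $H\in\ostar^{(n-1)d}(\rho)$, then Lemma~\ref{lem:adjointproperty}\ref{adj-iii} with $k=(n-1)d$ gives the claim. The case $n\le 1$ is trivial: for $n=1$ the tuple $(g_0)$ itself lies in $\Sigma(g_0)$. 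So I assume $n\ge 2$, i.e.\ $k\ge1$. One bookkeeping point: the identifications must be arranged as a sequence of $\bDelta$ steps, each identifying exactly two variables. A final permutation of variables is not available from transitivity alone, so the block structure should be ordered so that the end result is literally $f(g_0,\dots,g_{n-1})$. If that cannot be arranged, one can instead first show the relevant $\ostar$-levels are closed under the needed permutations; I expect the ordering can simply be arranged.

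The main step is $H\in\ostar^{(n-1)d}(\rho)$. By iterating \ref{ostar-2}, this is equivalent to $\ebtrl^{(n-1)d}(H)\subseteq\rho$, i.e.\ to $\btrl_d(H)\subseteq\rho$. So take any $d$-ary basic translation $h$ of $H$. It fixes all but $d$ of the $nd$ variables at constants from $B$, and keeps $d$ free variables in order. Let $t_i$ be the number of free variables in block $X_i$, so $\sum t_i=d$. Then
\[
h=f(g'_0,\dots,g'_{n-1}),
\]
where $g'_i(x_0,\dots,x_{d-1})=g_i(\ldots)$ is obtained from $g_i$ by fixing its $d-t_i$ non-free coordinates at constants and feeding in the $t_i$ free variables that fall in block $i$. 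Those free variables form a consecutive segment of $x_0,\dots,x_{d-1}$, because the blocks are consecutive. Hence $g'_i$ has exactly the shape \ref{trl-1}: $u$ prepended fictitious variables, $t_i$ variables kept in order, and $v$ appended ones. So $g'_i\in\trl_d\{g_i\}$ by Lemma~\ref{lem:trldecomposition}\ref{trldecomposition-ii}.

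It remains to check the constancy condition in $\Sigma$. If $t_i=0$ then $g'_i$ is constant. Since $\sum t_i=d$, at most $d$ of the $g'_i$ have $t_i>0$, so at most $d$ are nonconstant. Therefore $(g'_0,\dots,g'_{n-1})\in\Sigma(g_0,\dots,g_{n-1})$, and the hypothesis gives $h\in\rho$.

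Note that internal reflexivity is never needed: the hypothesis already supplies membership of all these translated combinations in $\rho$, and only transitivity is used. I expect the main obstacle to be the careful indexing of the identification sequence. Specifically, one has to make sure that collapsing the blocks step by step via $\bDelta$ yields exactly $f(g_0,\dots,g_{n-1})$ with the variables in the correct order. The natural choice is to always identify a variable of the last block with the corresponding variable of the first block, since by Definition~\ref{def:Delta} the merged variable keeps the earlier position.
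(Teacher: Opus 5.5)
Your proposal is correct and follows the paper's proof essentially step for step. Both use the same block function $h$ on $B^{dn}$ and the same claim $h\in\ostar^{d(n-1)}(\rho)$, verified by splitting a basic $d$-translation into blocks to land in $f(\Sigma(g_0,\dots,g_{n-1}))$, followed by the same appeal to Lemma~\ref{lem:adjointproperty}\ref{adj-iii}; your ordering worry is harmless (the paper does not even address it), since each $\bDelta$ step deletes the later of the two merged positions, so merging every variable of blocks $1,\dots,n-1$ into its counterpart in block $0$, in any order, leaves exactly $f(g_0,\dots,g_{n-1})$.
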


\begin{proof} Let $g_{0},\dots,g_{n-1}\in \rho$ and assume
  $f(\Sigma(g_{0},\dots,g_{n-1}))\subseteq\rho$ holds for some $f:A^{n}\to A$. 
If $n\leq d$ then
$(g_{0},\dots,g_{n-1})\in\Sigma(g_{0},\dots,g_{n-1})$ and, by
assumption, we trivially can conclude $f(g_{0},\dots,g_{n-1})\in\rho$.

Thus we can assume $n>d$.
We have to show $f(g_{0},\dots,g_{n-1})\in\rho$.

Define a $dn$-ary operation $h\in A^{B^{dn}}$ by
\begin{align*}
  h(X^{(0)},\dots,X^{(n-1)}):=f(g_{0}(x_{0}^{(0)},\dots,x_{d-1}^{(0)}),\dots, 
     g_{n-1}(x_{0}^{(n-1)},\dots,x_{d-1}^{(n-1)}))
\end{align*}
using notation $X^{(i)}:=(x_{0}^{(i)},\dots,x_{d-1}^{(i)})$ for
$i\in\{0,1,\dots,n-1\}$. Thus $X^{(i)}\in B^{d}$, $g_{i}:B^{d}\to A$.
Identifying in $h$ the variables $x_{0}^{(i)}=\ldots=x_{d-1}^{(i)}$ for
$i\in\{0,\dots,n-1\}$, (i.e., $X^{(0)}=\ldots=X^{(n-1)}$) we get
$f(g_{0},\dots,g_{d-1})$. Formally we
have to apply $(n-1)$ times the operator $\bDelta$ for each $i$,
consequently we have $f(g_{0},\dots,g_{n-1})\in\bDelta^{d(n-1)}(h)$.

In order to apply Lemma~\ref{lem:adjointproperty}, we are going to prove the following 

\underline{Claim:} $h\in\ostar^{d(n-1)}(\rho)$, or, equivalently
(cf.~\ref{ostar-2}), $\ebtrl^{d(n-1)}(h)\subseteq\rho$.

Let $h'\in\ebtrl^{d(n-1)}(h)$ (we must show $h'\in\rho$), i.e., $h'$
can be obtained from $h$ by substituting $nd-d$ variables in
$X^{(0)},\dots,X^{(n-1)}$ by a constant, i.e., we have
\begin{align*}
   h'(x_{0},\dots,x_{d-1}):=h(\underbracket{\dots}_{\in B},x_{0},\underbracket{\dots}_{\in B},x_{1},\underbracket{\dots}_{\in B},\dots,x_{d-1},\underbracket{\dots}_{\in B})=:h(Y^{(0)},\dots,Y^{(n-1)})
\end{align*}
where $Y^{(i)}$ is the $i$-th block if we divide the $dn$-tuple
$(\underbracket{\dots}_{\in B},x_{0},\underbracket{\dots}_{\in
  B},x_{1},\underbracket{\dots}_{\in
  B},\dots,x_{d-1},\underbracket{\dots}_{\in B})$ into $n$ blocks of
equal length $d$.
Therefore each $Y^{(i)}$ must be of the form
\begin{align*}
  (\underbracket{\dots}_{\in B},x_{u},\underbracket{\dots}_{\in
  B},x_{u+1},\underbracket{\dots}_{\in
  B},\dots,x_{u+t-1},\underbracket{\dots}_{\in B})
\end{align*}
 (for $t=0$ the
tuple $Y^{(i)}$ contains only constants and no $x_{j}$). 
 Define $g'_{i}\in
A^{B^{d}}$ by $g'_{i}(x_{0},\dots,x_{d-1}):=g_{i}(Y^{(i)})$ for
$i\in\{0,\dots,n-1\}$. 
According to \ref{trl-1} 
 we have
$g_{i}'\in\trl_{d}(g_{i})$.

Since there are only $d$ variables $x_{i}$, there are at
most $d$ tuples among $Y^{(0)},\dots,Y^{(n-1)}$ where possibly not all
components are constants, 
in other words, at least $n-d$ of the $Y^{(i)}$'s
are constant tuples ($\in B^{d}$) and the corresponding $g'_{i}(x_{0},\dots,x_{d-1})=g_{i}(Y^{(i)})$ are
evaluated to $c_{i}:=g_{i}(Y^{(i)})=g_{i}(b_{0}^{(i)},\dots,b_{d-1}^{(i)})$ (for
some $b_{0}^{(i)},\dots,b_{d-1}^{(i)}\in B$). 
Note that $c_{i}\in\Ima\{g_{i}\}$.
The $d$ remaining $Y^{(i)}$
may contain one, more or no
variables from $\{x_{0},\dots,x_{d-1}\}$.
Thus $(g'_{0},\dots,g'_{n-1})\in\Sigma(g_{0},\dots,g_{n-1})$. Further,
\begin{align*}
  h'(x_{0},\dots,x_{d-1})&=h(Y^{(0)},\dots,Y^{(n-1)})
   =f(g_{0}(Y^{(0)}),\dots,g_{n-1}(Y^{n-1}))\\
   &=f(g'_{0}(x_{0},\dots,x_{d-1}),\dots,g'_{n-1}(x_{0},\dots,x_{d-1})).
\end{align*}
Consequently, $h'=f(g'_{0},\dots,g'_{n-1})\in
f(\Sigma(g_{0},\dots,g_{n-1}))\subseteq\rho$ by assumption, i.e.,
$h'\in\rho$, and the claim is proved.

From transitivity of $\rho$, the claim and Lemma~\ref{lem:adjointproperty}\ref{adj-iii} we
finally conclude 
\begin{align*}
  f(g_{0},\dots,g_{d-1})\in\bDelta^{d(n-1)}(h)\subseteq\bDelta^{d(n-1)}(\ostar^{d(n-1)}(\rho))\subseteq\rho
\end{align*}
which finishes the proof.
\end{proof}




Now one could ``easily'' prove the following 
theorem which shows that (weak) generalized quasiorders of simple
arity satisfy our ``motivating'' property $\Xi_{d}$ (or $\wXi_{d}$,
respectively). However, we can omit the proof here because it is a
special case of 
Theorem~\ref{thm:PolofGQuordMain2} (case $s=1$) the proof of which
however is based on the results of this section.

\begin{theorem}[see Theorem~\ref{thm:PolofGQuordMain2}]\label{thm:PolofGQuordMain}
  Let $\rho \subseteq A^{B^d}$ ($d\in\N_{+}$) be a weak generalized
  quasiorder. Then $\wXi_{d}(\rho)$. 
If $\rho$ is a generalized quasiorder then we have $\Xi_{d}(\rho)$. \qed

\end{theorem}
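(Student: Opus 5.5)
The plan is to prove $\wXi_{d}(\rho)$ directly, i.e.\ for every $f\in\Op(A)$ the equivalence $f\preserves\rho\iff\trl^{K}_{d}(f)\preserves\rho$ with $K:=\Ima\rho$. The second statement then follows at once. For a generalized quasiorder we have $\Ima\rho=A$ by Remark~\ref{rm:equivalentreflexivity}, and $\Xi_{d}(\rho)\iff\wXi_{d}(\rho)\land\Ima\rho=A$.

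For ``$\Longrightarrow$'' I use internal reflexivity. By Remark~\ref{rm:equivalentreflexivity} the relation $\rho$ contains every constant map $g_{a}$ with $a\in K$, so $\cC_{K}\subseteq\Pol\rho$. Every element of $\trl^{K}_{d}(f)$ lies in the clone generated by $\{f\}\cup\cC_{K}$: substituting constants from $K$ is composition with constant operations, and adding fictitious variables is a clone operation. Hence $f\preserves\rho$ implies $\trl^{K}_{d}(f)\preserves\rho$.

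For ``$\Longleftarrow$'' assume $\trl^{K}_{d}(f)\preserves\rho$ with $f$ $n$-ary, and let $g_{0},\dots,g_{n-1}\in\rho$. By Lemma~\ref{lem:PolofGQuord}, since $\rho$ is transitive, it suffices to show $f(\Sigma(g_{0},\dots,g_{n-1}))\subseteq\rho$. Take $(g'_{0},\dots,g'_{n-1})\in\Sigma(g_{0},\dots,g_{n-1})$. Each $g'_{i}\in\trl_{d}(g_{i})\subseteq\rho$ by Lemma~\ref{lem:adjointproperty}\ref{adj-i}, and at most $d$ of them are nonconstant. Each constant $g'_{i}$ is the constant map with some value $c_{i}\in\Ima g_{i}\subseteq K$. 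Let $i_{1}<\dots<i_{t}$ be the nonconstant indices, so $t\le d$. Let $f'$ be the $t$-ary operation obtained from $f$ by fixing position $i$ at $c_{i}$ for all constant indices, and let $f''$ be $f'$ with $d-t$ fictitious variables appended. Then $f''\in\trl^{K}_{d}(f)$, so $f''\preserves\rho$. Since $\rho$ contains all constant maps with values in $K$, I may pad the argument list with $d-t$ copies of some constant map $g_{a}$, $a\in K$. This gives
\[
f(g'_{0},\dots,g'_{n-1})=f'(g'_{i_{1}},\dots,g'_{i_{t}})=f''(g'_{i_{1}},\dots,g'_{i_{t}},g_{a},\dots,g_{a})\in\rho .
\]
The degenerate cases need a brief check. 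If $n\le d$ the lemma is trivial. If $t=0$, use $\rho\neq\emptyset$; if $\rho=\emptyset$ the statement is vacuous anyway.

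The main obstacle is the bookkeeping in this last step. One must check that the constants $c_{i}$ really lie in $K=\Ima\rho$, so that the translation is over $K$ and not over all of $A$; this holds because each $c_{i}$ is a value of some $g_{i}\in\rho$. One must also check that the ordering of the surviving variables is compatible with the definition of $\trl_{d}$. It is, because basic translations keep the surviving variables in their original order, and fictitious variables may be appended at the end.
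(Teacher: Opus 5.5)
Your proposal is correct and is essentially the paper's own argument. The paper proves this statement as the case $s=1$ of Theorem~\ref{thm:PolofGQuordMain2}, and that proof has two parts, which you have merged into a single step: the induction basis (tuples in $\rho^{n}$ with at most $d$ nonconstant entries are handled by a $d$-translation over $\Ima\rho$) and the one induction step (Lemma~\ref{lem:PolofGQuord} together with $\Sigma(g_{0},\dots,g_{n-1})\subseteq T_{1}$ by internal reflexivity); your padding with constant maps $g_{a}$, $a\in\Ima\rho$, makes explicit the case of fewer than $d$ nonconstant arguments, which the paper passes over quickly.
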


\begin{remark}\label{rem:gQuord}
As already mentioned in the introduction, the results of this section
generalize the $1$-dimensional case $d=1$ treated in \cite{JakPR2024}
where the authors used quite other notation. To keep ``compatibility''
we give here some hints for readers familiar with this paper.

In \cite{JakPR2024} the finite set $B$ is always taken as $B=\{1,\dots,m\}$ for some
$m\in\N_{+}$, i.e., $1$-dimensional generalized quasiorders are just
usual relations $\rho\subseteq A^{m}$. The elements of $ A^{B^{1}}$
and $A^{B^{2}}$ are presented as tupels $(a_{i})_{i\in
  B}=(a_{1},\dots,a_{m})$ and matrices $(a_{i,j})_{i,j\in B}$,
respectively. The crucial notation needed for generalized quasiorders
is as follows: $\rho\models(a_{i,j})$ expresses the fact that each row
and each column of the matrix $(a_{i,j})$ belongs to $\rho$. Then we
get for $h:B^{2}\to A:(i,j)\mapsto a_{i,j}$:
\begin{align*}
  \rho\models (a_{i,j})_{i,j\in B} \iff& h\in \ostar(\rho),
\quad(a_{i,i})_{i\in B}=\bDelta(h),
\\ \text{thus }
\ostar(\rho)=&\ \{(a_{i,j})_{i,j\in B}\mid 
     \rho\models (a_{i,j})_{i,j\in B}\},\\
 \bDelta(\sigma)=&\ \{(a_{i.i})_{i\in B}\mid (a_{i,j})_{i,j\in
  B}\in\sigma\} \text{ for } \sigma\subseteq A^{B^{2}}.
\end{align*}

\end{remark}

\section{The case of higher-dimensional generalized quasiorders of
  compound arity}\label{sec:GQuord2}

\begin{notation}\label{notationGQuord}
Let
$\alpha=\alpha_{1}\times\ldots\times\alpha_{s}=B_{1}^{d_{1}}\times\ldots\times
B_{s}^{d_{s}}$ be a (rectangular) compound arity
(cf.~\ref{def:Rrelation}).
For $j\in\{1,\dots,s\}$ let 
\begin{align*}
  \alpha/\alpha_{j}:=\alpha_{1}\times\ldots\times
\alpha_{j-1}\times \alpha_{j+1}\times\ldots\times\alpha_{s}
 \text{ (deleting the $j$-th factor $\alpha_{j}$).}
\end{align*}
For elements $g\in A^{\alpha}$, i.e., $g:\alpha\to
A:X^{(\alpha)}\mapsto g(X^{(\alpha)})$, we use the notation 
\begin{align*}
  X^{(\alpha)}&:=(X^{(\alpha_{1})},\dots,X^{(\alpha_{s})})\in\alpha,\\
  X^{(\alpha_{j})}&:=
      (x_{0}^{(\alpha_{j})},\dots,x_{d_{j}-1}^{(\alpha_{j})})\in\alpha_{j}=B_{j}^{d_{j}},\\
  X^{(\alpha/\alpha_{j})}&:=
(X^{(\alpha_{1})},\dots,X^{(\alpha_{j-1})},X^{(\alpha_{j+1})},\dots,X^{(\alpha_{s})})\in\alpha/\alpha_{j},
\end{align*}
for $j\in\{1,\dots,s\}$. Formally these
$X^{(\alpha)},X^{(\alpha_{j})},X^{\alpha/\alpha_{j}}$ are elements of
the indicated sets $\alpha,\alpha_{j},\alpha/\alpha_{j},$ but they can also be
interpreted as variables for functions $\alpha\to A$,
$\alpha_{j}\to A$ and $\alpha/\alpha_{j}\to A$, and sometimes it is convenient
to denote the function, e.g., $g:\alpha\to A$ by the term $g(X^{(\alpha)})$.

 There is a
canonical bijection
$A^{\alpha}\to(A^{\alpha/\alpha_{j}})^{\alpha_{j}}:g\mapsto
g^{(\alpha_{j})}$ where the image $g^{(\alpha_{j})}\in A^{\alpha/\alpha_{j}}$ of $g\in
A^{\alpha}$ is given as
follows: for $X^{(\alpha_{j})}\in\alpha_{j}$ ($j\in\{1,\dots,s\}$) we define 
$g^{(\alpha_{j})}(X^{\alpha_{j}})\in A^{\alpha/\alpha_{j}}$ according to
\begin{align*}
  (g^{(\alpha_{j})}(\underbrace{X^{\alpha_{j}}}_{\in \alpha_{j}}))
(\underbrace{X^{\alpha/\alpha_{j}}}_{\in\alpha/\alpha_{j}})
              :=g(\underbrace{X^{(\alpha)}}_{\in\alpha})
     =g(X^{(\alpha_{1})},\dots,X^{(\alpha_{s})}).
\end{align*}
Therefore, any R-relation $\rho\subseteq A^{\alpha}$ (on base set
$A$) with compound relational arity $\alpha$ can be considered as an R-relation, denoted by
$\rho^{(\alpha_{j})}$, on base set $A^{\alpha/\alpha_{j}}$ with simple
arity $\alpha_{j}$ as follows:
\begin{align*}
  \rho^{(\alpha_{j})}:=\{g^{(\alpha_{j})}\mid g\in\rho\}\subseteq
  (A^{\alpha/\alpha_{j}})^{\alpha_{j}}.
\end{align*}
Clearly, $\Ima\rho^{(\alpha_{j})}=\{g^{(a_{j})}(C^{(\alpha_{j})})\mid
g\in\rho, C^{(\alpha_{j})}\in\alpha_{j}\}$ (here
$C^{(\alpha_{j})}=(c_{0}^{(\alpha_{j})},\dots,c_{d_{j}-1}^{(\alpha_{j})})\in
B_{j}^{d_{j}}$).

Further, any operation $f:A^{n}\to A$ induces an operation on any power of $A$ in the standard way.
Here, we take this power to be $A^{\alpha/\alpha_{j}}$ and obtain a function
$f^{(\alpha_{j})}:(A^{\alpha/\alpha_{j}})^{n}\to A^{\alpha/\alpha_{j}}$ as
follows:
\begin{align*}
  f^{(\alpha_{j})}(h_{0},\dots,h_{n-1})(X^{\alpha/\alpha_{j}})
 :=f(h_{0}(X^{\alpha/\alpha_{j}}),\dots,h_{n-1}(X^{\alpha/\alpha_{j}}))
\end{align*}
for $h_{0},\dots,h_{n-1}\in A^{\alpha/\alpha_{j}}$.
\end{notation}

We provide in Figure~\ref{fig:compoundarity} the picture that we have
in mind when working with, for example, the compound arity
$\alpha = \alpha_1 \times \alpha_2 = 3^1 \times 2^1$. In the top left
corner of the figure we show a typical element of an $R$-relation of
arity $\alpha$. A picture of $g^{(\alpha_2)}$ is shown on the right of
the figure, where we have equivalently depicted $g$ as a pair of
triples. The other way of interpreting $g$ as a triple of pairs is
shown below $g$.

\begin{figure}
    \centering
    \includegraphics[width=1\linewidth]{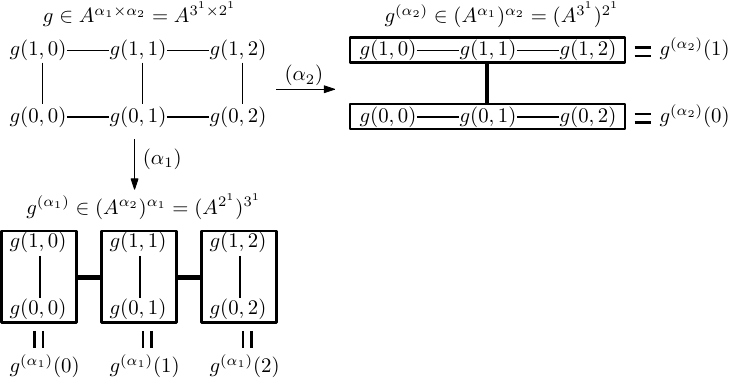}
    \caption{Compound arity $\alpha = \alpha_{1}\times\alpha_{2}=3^1 \times 2^1$}
    \label{fig:compoundarity}
\end{figure}

\begin{lemma}\label{lem:1forThm} 

Let $\rho\in\RRela[\alpha](A)$, i.e., $\rho\subseteq A^{\alpha}$,
$f:A^{n}\to A$ and $g,g_{0},\dots,g_{n-1}\in A^{\alpha}$. With the notation
from \ref{notationGQuord}, for $j\in\{1,\dots,s\}$ we have
\begin{enumerate}[label=\textup{(\roman*)}]

\item \label{1forThm-i}
$g\in\rho\iff g^{(\alpha_{j})}\in\rho^{(a_{j})}$,

\item \label{1forThm-ii}
$(f(g_{0},\dots,g_{n-1}))^{(\alpha_j)} = 
f^{(\alpha_{j})}(g_{0}^{(\alpha_{j})},\dots,g_{n-1}^{(\alpha_{j})})$,

\item \label{1forThm-iii} 
$f(g_0, \dots, g_{n-1}) \in \rho\iff 
f^{(\alpha_{j})}(g_0^{(\alpha_j)}, \dots, g_{n-1}^{(\alpha_j)}) \in \rho^{(\alpha_{j})}$,

\item \label{1forThm-iv} 
$f\preserves\rho\iff f^{(\alpha_{j})}\preserves\rho^{(\alpha_{j})}$,

\item \label{1forThm-v} $g$ is constant at $\alpha_{j}$ (cf.~\ref{def:depend}) $\iff$
  $g^{(\alpha_{j})}$ is constant.

\end{enumerate}

\end{lemma}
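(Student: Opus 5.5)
All five items follow from the definitions in Notation~\ref{notationGQuord}: the map $g\mapsto g^{(\alpha_j)}$ is the canonical currying bijection $A^{\alpha}\to(A^{\alpha/\alpha_j})^{\alpha_j}$, and every operation involved acts coordinatewise. I will prove \ref{1forThm-i} and \ref{1forThm-ii} directly, and then obtain the others from them.

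For \ref{1forThm-i}, the direction ``$\Rightarrow$'' holds because $\rho^{(\alpha_j)}$ is defined as the image of $\rho$. For ``$\Leftarrow$'', suppose $g^{(\alpha_j)}=k^{(\alpha_j)}$ for some $k\in\rho$. Since currying is injective, $g=k\in\rho$. Item \ref{1forThm-ii} is a pointwise check. For $X^{(\alpha_j)}\in\alpha_j$ and $X^{(\alpha/\alpha_j)}\in\alpha/\alpha_j$, unfolding the definitions of $g^{(\alpha_j)}$, of $f$ applied componentwise (Definition~\ref{def:preservation}) and of $f^{(\alpha_j)}$, both sides evaluate to
\[
f\bigl(g_0(X^{(\alpha)}),\dots,g_{n-1}(X^{(\alpha)})\bigr).
\]
The one point to keep straight is that $f^{(\alpha_j)}$ acts on $(A^{\alpha/\alpha_j})^{\alpha_j}$ componentwise in the argument $X^{(\alpha_j)}$, and then again componentwise in $X^{(\alpha/\alpha_j)}$.

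Item \ref{1forThm-iii} then follows by applying \ref{1forThm-i} to $f(g_0,\dots,g_{n-1})$ and rewriting with \ref{1forThm-ii}. For \ref{1forThm-iv}, every element of $\rho^{(\alpha_j)}$ has the form $g^{(\alpha_j)}$ with $g\in\rho$, so quantifying \ref{1forThm-iii} over all $(g_0,\dots,g_{n-1})\in\rho^n$ gives the equivalence. For \ref{1forThm-v}, Definition~\ref{def:depend} says $g$ is constant at $\alpha_j$ exactly when, for all $X^{(\alpha_j)},Y^{(\alpha_j)}$ and all $X^{(\alpha/\alpha_j)}$, the values $g(\dots X^{(\alpha_j)}\dots)$ and $g(\dots Y^{(\alpha_j)}\dots)$ agree. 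In curried form this says $g^{(\alpha_j)}(X^{(\alpha_j)})=g^{(\alpha_j)}(Y^{(\alpha_j)})$ as elements of $A^{\alpha/\alpha_j}$, which is exactly the statement that $g^{(\alpha_j)}$ is a constant map on $\alpha_j$.

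I expect no real obstacle. The only care needed is in \ref{1forThm-ii}, to keep the index bookkeeping consistent when reordering the factors of $\alpha$.
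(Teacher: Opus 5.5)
Your proposal is correct and follows essentially the same route as the paper: (i) from the bijectivity of the currying map $g\mapsto g^{(\alpha_j)}$, (ii) by evaluating both sides pointwise to $f(g_0(X^{(\alpha)}),\dots,g_{n-1}(X^{(\alpha)}))$, (iii) and (iv) as consequences of (i) and (ii), and (v) by rewriting Definition~\ref{def:depend} in curried form. Your remark that every element of $\rho^{(\alpha_j)}$ has the form $g^{(\alpha_j)}$ with $g\in\rho$ spells out the step for (iv) that the paper only describes as ``follows directly''.
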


\begin{proof}
\ref{1forThm-i} directly follows from the definitions (note that
$g\mapsto g^{(\alpha_{j})} $ is a bijection). 
\ref{1forThm-iii}
follows from \ref{1forThm-i} and \ref{1forThm-ii}, and
\ref{1forThm-iv} follows directly from \ref{1forThm-iii}. 

\ref{1forThm-ii}: Both sides are 
$\alpha_{j}$-tuples on the set $A^{\alpha/\alpha_{j}}$, so (by the
above mentioned bijection) there exist $h_{1},h_{2}\in A^{\alpha}$
such that $h_{1}^{(\alpha_{j})}$ is the left side and 
$h_{2}^{(\alpha_{j})}$ is the right side of
\ref{1forThm-ii} (clearly, $h_{1}=f(g_{0},\dots,g_{n-1})$). Evaluating elementwise these functions for 
each coordinate we get equality $h_{1}=h_{2}$, which finishes the
proof of~\ref{1forThm-ii}: in fact, using the
notation and definitions from \ref{notationGQuord}, we have
\begin{align*}
(h_{1}^{(\alpha_{j})}(X^{\alpha_{j}}))(X^{\alpha/\alpha_{j}})&=(f(g_{0},\dots,g_{n-1}))(X^{\alpha})= f(g_{0}(X^{\alpha}),\dots,g_{n-1}(X^{\alpha})),\\
(h_{2}^{(\alpha_{j})}(X^{\alpha_{j}}))(X^{\alpha/\alpha_{j}})&= 
(f^{(\alpha_{j})}(g_{0}^{(\alpha_{j})}(X^{\alpha_{j}}),\dots,g_{n-1}^{(\alpha_{j})}(X^{\alpha_{j}})))(X^{\alpha/\alpha_{j}})\\
&=f(g_{0}^{(\alpha_{j})}(X^{(\alpha_{j})})(X^{\alpha/\alpha_{j}}),
  \dots,g_{n-1}^{(\alpha_{j})}(X^{(\alpha_{j})})(X^{\alpha/\alpha_{j}}))\\
&=f(g_{0}(X^{\alpha}),\dots,g_{n-1}(X^{\alpha})).
\end{align*}
%

\ref{1forThm-v}: According to and using the notation from
\ref{def:depend}, $g$ is constant at
$\alpha_{j}$ if
\begin{align*}
g(X^{(\alpha_{1})},\dots,X^{(\alpha_{s})})=
g(X^{(\alpha_{1})},\dots,X^{(\alpha_{j-1})},
C^{(\alpha_{j})},X^{(\alpha_{j+1})},\dots,X^{(\alpha_{s})}),
\end{align*}
equivalently
\begin{align*}
g^{(\alpha_{j})}(X^{(\alpha_{j})})(X^{(\alpha/\alpha_{j})})
=
g^{(\alpha_{j})}(C^{(\alpha_{j})})(X^{(\alpha/\alpha_{j})}).
\end{align*}

This shows that $g^{(\alpha_{j})}\in
(A^{\alpha/\alpha_{j}})^{\alpha_{j}}$ maps each $X^{(\alpha_{j})}$ to the
constant 
element $h\in A^{\alpha/\alpha_{j}}$, given by 
$h(X^{(\alpha/\alpha_{j})}):=g^{(\alpha_{j})}(C^{(\alpha_{j})})(X^{(\alpha/\alpha_{j})})$
for some
$C^{(\alpha_{j})}\in \alpha_{j}=B_{j}^{d_{j}}$.
\end{proof}

\begin{definition}\label{Adef2:Refl+Trans} Let $\rho \subseteq A^{\alpha}$ be an R-relation of compound arity
  $\alpha=\alpha_{1}\times\ldots\times\alpha_{s}$. We say that $\rho$ is
  \begin{enumerate}[label=\textup{(\roman*)}]
 \item \label{Adef2:internally-reflexive}\emph{internally reflexive}
    if $\rho^{(\alpha_{j})}$ is internally reflexive for all $j\in\{1,\dots,s\}$,

  \item \label{Adef2:transitive} \emph{transitive} if
    $\rho^{(\alpha_{j})}$ is transitive for all $j\in\{1,\dots,s\}$,

  \item \label{Adef2:weakQuord} a \emph{$d_{\alpha}$-dimensional weak
      generalized quasiorder} if $\rho$ is internally reflexive and transitive.

  \item \label{Adef2:reflexive}\emph{reflexive} if it is internally
    reflexive and additionally contains every constant function
    $g_{a}:(x_0, \dots, x_{d-1})\mapsto a$ for $a \in A$, 

  \item \label{Adef2:Quord} a \emph{$d_{\alpha}$-dimensional
      generalized quasiorder} if $\rho$
    is reflexive and transitive. 
  \end{enumerate}

\end{definition}
\

\begin{example}\label{ex:easycompoundarityexample}

  Here we provide a basic example of a $2$-dimensional
  generalized quasiorder of compound arity over $A = \{0,1,2\}$ (the
  arity is taken to 
  be $\alpha = \alpha_1 \times \alpha_2 = 3^1 \times 2^1$).

Our
  example is pp-defined from two $1$-dimensional generalized
  quasiorders $\rho_1$ and $\rho_2$, which are defined as follows. Let
  $\theta$ be the equivalence relation on $A$ with nontrivial class
  $\{0,1\}$ and let $\leq$ be the usual total order on $A$. We define

  \begin{enumerate}[label=\textup{(\roman*)}]
  \item $\rho_1: = \Pola[1]\theta$, and
  \item $\rho_2 := \{ (x,y) \mid x \leq y \} $.
  
  \end{enumerate}
  Obviously, both $\rho_1$ and $\theta$ are $1$-dimensional
  generalized quasiorders. It follows from the results in \cite{JakPR2024} that the unary fragment
  $\Pola[1]\theta$ of $\Pol\theta$ is a $1$-dimensional generalized quasiorder.

  We now define $ \rho \subseteq A^{3^1 \times 2^1} $ as
  \[
    \rho:= \{ g \in A^{3^1 \times 2^1}\mid g^{(3^1)} \in (\rho_2)^{3^1} 
    \text{ and } g^{(2^1)} \in (\rho_1)^{2^1} \}.
  \]
  So, $\rho$ is the relation consisting of those $3^1 \times 2^1$
  rectangles over $A$ in which all rows determine triples belonging to
  $\rho_1$ and all columns determine pairs belonging to $\rho_2$. We
  illustrate this at the top of Figure~\ref{fig:compoundGQuord}, where
  pictures of both $\rho^{(\alpha_1)}$ and $\rho^{(\alpha_2)}$ are
  superimposed on a picture of a typical element $g\in \rho$.

\begin{figure}
  \centering \includegraphics[width=1\linewidth]{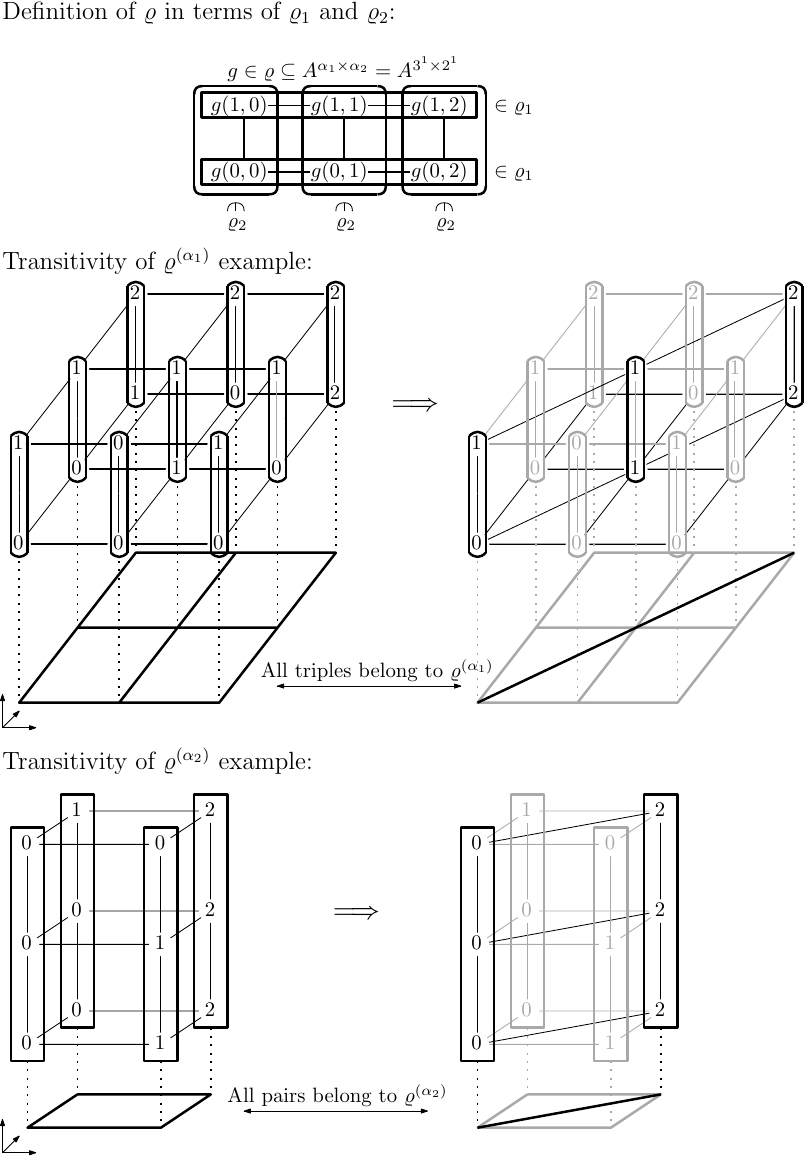}
  \caption{Compound arity $\alpha = 3^1 \times 2^1$ generalized
    quasiorder $\rho$ (see Example~\ref{ex:easycompoundarityexample})
  }
  \label{fig:compoundGQuord}
\end{figure}
We also include in Figure~\ref{fig:compoundGQuord} two verifications
of the implications that must be checked to demonstrate that $\rho$ is
transitive. To check that $\rho^{(\alpha_1)}$ is transitive, we need
to see that
$ \bDelta \ostar (\rho^{(\alpha_1)}) \subseteq \rho^{(\alpha_1)}. $ In
the middle of the figure on the left, we show a typical element that
belongs to $\ostar (\rho^{(\alpha_1)})$, i.e.\ a function which takes
inputs from $3^2$, outputs elements of $2^1$, and also
satisfies the property that any row or column determine an element of
$\rho^{(\alpha_1)}$. Axes are drawn which orient the coordinates:
$3^1$ increases to the right, $3^1$ increases into the page, and $2^1$
increases upwards. The square $3^2$ is drawn below and the outputs of
vertices (elements of $\rho_2 \subseteq A^{2^1}$) are indicated with
dotted lines. We also draw lines that indicate that this function can
be equivalently thought of as belonging to the R-relation
$A^{3^2 \times 2^1}$ (under the inverse of the canonical isomorphism
$(3^2)$ from $A^{3^2 \times 2^1}$ to $(A^{2^1})^{3^2}$). Transitivity
of $\rho$ means that we can identify the two variables of this
function and obtain an element of $\rho^{(\alpha_1)}$ and this is shown to
the right of the implication arrow.

The transitivity of $\rho^{(\alpha_2)}$ example is shown at the
bottom of Figure~\ref{fig:compoundGQuord} and it works
similarly. Since this example is mainly provided for building
intuition for compound arities, we leave the remaining details of the
verification that $\rho$ really is a generalized quasiorder to the
reader.
\end{example}

\begin{lemma}\label{lem:dependence}
 Let $\alpha=\alpha_{1}\times\ldots\times\alpha_{s}=B_1^{d_1}\times \dots \times B_s^{d_s}$. Suppose that
  $g \in A^{\alpha}$ is constant at $\alpha_j$, for some
  $j \in \{1, \dots, s\}$, and let $g'\in A^{\alpha}$  such that
  $(g')^{(\alpha_{\ell})} \in \trl_{d_{\ell}}(g^{(\alpha_\ell)})$ for
  some $ \ell \in \{1, \dots, s\}$. Then $g'$ also is constant at 
  $\alpha_{j}$.
\end{lemma}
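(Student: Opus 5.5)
Since $g'^{(\alpha_\ell)}\in\trl_{d_\ell}(g^{(\alpha_\ell)})$, I would first use Lemma~\ref{lem:trldecomposition}\ref{trldecomposition-ii} and Remark~\ref{mu-translation} to write $g'^{(\alpha_\ell)}=g^{(\alpha_\ell)}\circ\mu$. Here $\mu:B_\ell^{d_\ell}\to B_\ell^{d_\ell}$ has the form \ref{trl-1}: it substitutes constants from $B_\ell$ at some places and puts the variables $x_u,\dots,x_{u+t-1}$ in the remaining places, in order. The other variables of the result are fictitious. Unwinding the canonical bijection of \ref{notationGQuord}, this says that for every $X^{(\alpha)}\in\alpha$
\begin{align*}
g'(X^{(\alpha_1)},\dots,X^{(\alpha_s)})=g(X^{(\alpha_1)},\dots,X^{(\alpha_{\ell-1})},\mu(X^{(\alpha_\ell)}),X^{(\alpha_{\ell+1})},\dots,X^{(\alpha_s)}).
\end{align*}
In words, $g'$ arises from $g$ by changing only the block of arguments belonging to $\alpha_\ell$; every other block is passed through unchanged.

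I then split into two cases.

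\emph{Case $j\neq\ell$.} Take any $X^{(\alpha)}$ and any $Y^{(\alpha_j)}$. Replacing $X^{(\alpha_j)}$ by $Y^{(\alpha_j)}$ on the left side of the displayed identity has the same effect as replacing it on the right side. The $\ell$-block $\mu(X^{(\alpha_\ell)})$ does not change, since it depends only on $X^{(\alpha_\ell)}$. Because $g$ is constant at $\alpha_j$ (Definition~\ref{def:depend}), the right side is unchanged, so the left side is unchanged too. Hence $g'$ is constant at $\alpha_j$.

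\emph{Case $j=\ell$.} Here I would use Lemma~\ref{lem:1forThm}\ref{1forThm-v}: $g$ is constant at $\alpha_\ell$ if and only if $g^{(\alpha_\ell)}$ is a constant map $\alpha_\ell\to A^{\alpha/\alpha_\ell}$. If $g^{(\alpha_\ell)}$ is constant, so is $g^{(\alpha_\ell)}\circ\mu=g'^{(\alpha_\ell)}$. Applying Lemma~\ref{lem:1forThm}\ref{1forThm-v} again, $g'$ is constant at $\alpha_\ell$. I expect no real obstacle. The only point needing care is the bookkeeping in the displayed identity, namely checking that a translation of $g^{(\alpha_\ell)}$ really acts only on the $\alpha_\ell$-coordinates of $g$. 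This follows directly from the definition of $g\mapsto g^{(\alpha_\ell)}$, because precomposition with $\mu$ acts on the argument $X^{(\alpha_\ell)}$ and not on the $A^{\alpha/\alpha_\ell}$-valued outputs.
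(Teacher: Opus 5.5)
Your proof is correct and follows the paper's argument. The paper likewise uses Remark~\ref{mu-translation} to obtain $\mu:\alpha_\ell\to\alpha_\ell$ with $g'(X^{(\alpha_1)},\dots,X^{(\alpha_s)})=g(X^{(\alpha_1)},\dots,\mu(X^{(\alpha_\ell)}),\dots,X^{(\alpha_s)})$ and then concludes in one step; your explicit split into the cases $j\neq\ell$ and $j=\ell$ (the latter via Lemma~\ref{lem:1forThm}\ref{1forThm-v}) just spells out what the paper leaves implicit.
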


\begin{proof}
The idea underlying the lemma is simple: the property being constant
at $\alpha_j$ cannot be destroyed by translations.

Using the notation of \ref{mu-translation} (with $\alpha=\beta=\alpha_{\ell}=B_{\ell}^{d_{\ell}}$), for
$(g')^{(\alpha_{\ell})} \in \trl_{d_{\ell}}(g^{(\alpha_\ell)})$ there
exists a suitable mapping $\mu:\alpha_{\ell}\to\alpha_{\ell}$ such that
$(g')^{(\alpha_{\ell})}(X^{(\alpha_{\ell})})=g^{(\alpha_{\ell})}(\mu(X^{a_{\ell})}))$
(for $X^{(\alpha_{\ell})}\in \alpha_{\ell}$)
and we have
\begin{align*}
  g'(X^{(\alpha_{1})},\dots,&X^{(\alpha_{\ell})},\dots,X^{(\alpha_{s})})\\&=
  (g')^{(\alpha_{\ell})}(X^{(\alpha_{\ell})})(X^{(\alpha_{1})},\dots,X^{(\alpha_{\ell-1})},X^{\alpha_{(\ell+1})},\dots,X^{(\alpha_{s})})\\ &=
  (g)^{(\alpha_{\ell})}(\mu(X^{(\alpha_{\ell})}))(X^{(\alpha_{1})},\dots,X^{(\alpha_{\ell-1})},X^{(\alpha_{\ell+1})},\dots,X^{(\alpha_{s})})\\&=
g(X^{(\alpha_{1})},\dots,X^{(\alpha_{\ell-1})},\mu(X^{(\alpha_{\ell})}),X^{(\alpha_{\ell+1})},\dots,X^{(\alpha_{s})}).
\end{align*}
Consequently, if $g$ is constant at $\alpha_{j}$ (i.e., the $j$-th block
$X^{(\alpha_{j})}$ of variables can be substituted by an arbitrary tuple
$C\in\alpha_{j}$) then also $g'$ has the same property, i.e., it
also is constant at $\alpha_{j}$.
\end{proof}

\begin{lemma} \label{lem:WeaklyReflexiveContainsImage}
  Let $\rho \subseteq A^{\alpha}$ be an internally reflexive
  R-relation of compound arity
  $\alpha=\alpha_{1}\times\ldots\times\alpha_{s} = B_1^{d_1}\times
  \dots \times B_s^{d_s}$. Then $\rho$ contains all constant maps
  $g_{a}:\alpha\to A:X^{(\alpha)}\mapsto a$
   for $a\in \Ima\rho$ {\rm(cf.\
  Remark~\ref{rm:equivalentreflexivity})}.
\end{lemma}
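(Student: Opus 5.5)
The plan is to flatten one block of coordinates at a time, using the simple-arity fact from Remark~\ref{rm:equivalentreflexivity}. That fact says an internally reflexive R-relation of simple arity contains all constant maps with values in its image. It is applied to each $\rho^{(\alpha_j)}$, and Lemma~\ref{lem:1forThm}\ref{1forThm-v} and Lemma~\ref{lem:dependence} are used to keep track of which blocks have already been made constant.

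Fix $a\in\Ima\rho$ and choose $g\in\rho$ together with $C\in\alpha$ such that $g(C)=a$. We build a sequence $g=g_0,g_1,\dots,g_s$ of elements of $\rho$ with two properties: $g_k$ is constant at $\alpha_1,\dots,\alpha_k$, and $g_k(C)=a$. For the step $k-1\to k$, consider the simple-arity R-relation $\rho^{(\alpha_k)}\subseteq (A^{\alpha/\alpha_k})^{\alpha_k}$, which is internally reflexive by Definition~\ref{Adef2:Refl+Trans}. The element $h:=g_{k-1}^{(\alpha_k)}(C^{(\alpha_k)})\in A^{\alpha/\alpha_k}$ lies in $\Ima\rho^{(\alpha_k)}$. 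By Remark~\ref{rm:equivalentreflexivity}, the constant map $\alpha_k\to A^{\alpha/\alpha_k}$ with value $h$ belongs to $\rho^{(\alpha_k)}$.

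Let $g_k\in\rho$ be its preimage under the bijection $g\mapsto g^{(\alpha_k)}$, using Lemma~\ref{lem:1forThm}\ref{1forThm-i}. Concretely, $g_k(X)=g_{k-1}(X^{(\alpha_1)},\dots,C^{(\alpha_k)},\dots,X^{(\alpha_s)})$.
\begin{itemize}
\item By Lemma~\ref{lem:1forThm}\ref{1forThm-v}, $g_k$ is constant at $\alpha_k$.
\item Clearly $g_k(C)=g_{k-1}(C)=a$.
\item For $j<k$, $g_k$ remains constant at $\alpha_j$. This is immediate from the explicit formula, since $g_k$ arises from $g_{k-1}$ by substituting the block $\alpha_k$. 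It is also exactly Lemma~\ref{lem:dependence}, because $g_k^{(\alpha_k)}$ is a translation (constant substitution followed by fictitious variables) of $g_{k-1}^{(\alpha_k)}$.
\end{itemize}

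After $s$ steps, $g_s\in\rho$ is constant at every block, hence constant on all of $\alpha$. Since $g_s(C)=a$, we get $g_s=g_a$, which proves the lemma.

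The only point needing care is the identification of the constant map in $\rho^{(\alpha_k)}$ with the element $g_k$ described by the explicit substitution formula. This is routine bookkeeping through the canonical bijection of Notation~\ref{notationGQuord}. Note that the case $d_k=0$ is harmless, since then $\alpha_k$ is a singleton and $g_{k-1}$ is already constant at $\alpha_k$.
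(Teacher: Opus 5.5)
Your proposal is correct and follows essentially the same route as the paper's proof. Both apply the simple-arity fact from Remark~\ref{rm:equivalentreflexivity} to each $\rho^{(\alpha_k)}$, pull the resulting constant map back through the bijection $g\mapsto g^{(\alpha_k)}$ to substitute $C^{(\alpha_k)}$ into the $k$-th block, and repeat over all blocks; you simply spell out the iteration (applying each step to $g_{k-1}$ rather than $g$) that the paper summarizes as ``perform this substitution for every factor.''
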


\begin{proof}
  Pick $a \in \Ima \rho$. Thus there exists a function $g \in \rho$ and
  some particular constant tuple 
 $C^{(\alpha)}=(C^{(\alpha_{1})},\dots,C^{(\alpha_{s})}) \in \alpha$ such that
  $g(C^{(\alpha)}) = a$. By~\ref{Adef2:internally-reflexive} of
  Definition~\ref{Adef2:Refl+Trans}, we have that $\rho^{(\alpha_j)}$
  is an internally reflexive R-relation (of simple arity) over
  $A^{\alpha / \alpha_j}$. We noticed in
  Remark~\ref{rm:equivalentreflexivity} that internally
  reflexive relations of simple arity contain for every constant in
  their image a corresponding constant function. Hence, the function
  $(g_j)^{(\alpha_{j})}\in (A^{\alpha/\alpha_{j}})^{\alpha_{j}}$ which is defined by
  \begin{align*}
        &(g_j)^{(\alpha_j)} (X^{(\alpha_j)}) := g^{(\alpha_j)}
    (C^{(\alpha_j)})
  \end{align*}
   belongs to $\rho^{(\alpha_{j})}$ (since 
$g^{(\alpha_j)}(C^{(\alpha_j)})\in\Ima\rho^{(\alpha_{j})}$). Consequently (by
   Lemma~\ref{lem:1forThm}\ref{1forThm-i}) we have $g_{j}\in\rho$,
   where $g_{j}$ by definition is of the form
   \begin{align*}
     g_j(X^{(\alpha_{1})},\dots,X^{(\alpha_{s})})=g(X^{(\alpha_{1})}, 
     \dots,X^{(\alpha_{d-1})},C^{(\alpha_{j})},X^{(\alpha_{j+1})},
                    \dots,X^{(\alpha_{s})}).
   \end{align*}

 We perform this substitution for every factor of
  $\alpha$ to obtain a function $g_{1, \dots, s}$, which is evidently
  given by
   $g_{1, \dots, s} (X^{(\alpha_1)}, \dots, X^{(\alpha_s)}) :=
    g(C^{(\alpha_1)}, \dots, C^{(\alpha_s)}) = a$.
\end{proof}

We expand the notation
introduced in Definition~\ref{def:SigmaofTuples} so that it can also
handle R-relations of compound arity.
\begin{definition}\label{def:SigmaofTuples2}
 Let $f:A^{n}\to A$, let $(g_{0},\dots,g_{n-1}) \in \left(A^{\alpha}\right)^n$, for $\alpha = B_1^{d}\times \dots \times B_s^{d_s}$, and let $j \in \{1, \dots, s\}$. We define
\[
  \Sigma_j(g_0, \dots, g_{n-1}) := \{ (g_0', \dots, g_{n-1}')\mid
  ((g_0')^{(\alpha_j)}, \dots, (g_{n-1}')^{(\alpha_j)}) \in
  \Sigma(g_0^{(\alpha_j)}, \dots, g_{n-1}^{(\alpha_j)}) \}.
\]
\end{definition}

With this notation, we now reformulate Lemma~\ref{lem:PolofGQuord} so
that it applies to a generalized
quasiorder of any arity (simple or compound).
\begin{lemma}\label{lem:PolofGQuordCompound}
  Let $\rho\in\wGQuorda[\alpha](A)$ for
  $\alpha=B_1^{d_1}\times \dots \times B_s^{d_s}$ and let $f:A^{n}\to A$. For
  each $j \in \{1, \dots, s \}$, the following hold.
 
  \begin{enumerate}[label=\textup{(\roman*)}]
  \item\label{compound-i} If $ (g_0', \dots, g_{n-1}') \in \Sigma_j (g_0, \dots, g_{n-1})$,
    then at most $d_j$-many of the $g'_0, \dots, g'_{n-1}$ depend on
    (are nonconstant at) $\alpha_j$.
  \item\label{compound-ii} $f(\Sigma_j(g_{0},\dots,g_{n-1}))\subseteq\rho
               \implies f(g_{0},\dots,g_{n-1})\in\rho.$
  \end{enumerate}
\end{lemma}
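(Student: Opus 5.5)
Both parts reduce to the simple-arity setting via the currying bijection $g\mapsto g^{(\alpha_j)}$ of Notation~\ref{notationGQuord}, using Lemma~\ref{lem:1forThm}. Fix $j$ and put $\tilde A:=A^{\alpha/\alpha_j}$. Then $\rho^{(\alpha_j)}\subseteq \tilde A^{B_j^{d_j}}$ is an R-relation of simple arity, and it is transitive because $\rho$ is a weak generalized quasiorder (Definition~\ref{Adef2:Refl+Trans}).

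For \ref{compound-i}, take $(g_0',\dots,g_{n-1}')\in\Sigma_j(g_0,\dots,g_{n-1})$. By Definition~\ref{def:SigmaofTuples2} the tuple $((g_0')^{(\alpha_j)},\dots,(g_{n-1}')^{(\alpha_j)})$ lies in $\Sigma(g_0^{(\alpha_j)},\dots,g_{n-1}^{(\alpha_j)})$. By Definition~\ref{def:SigmaofTuples}, at most $d_j$ of these curried functions are nonconstant as maps $B_j^{d_j}\to\tilde A$. Lemma~\ref{lem:1forThm}\ref{1forThm-v} says $g_i'$ is constant at $\alpha_j$ iff $(g_i')^{(\alpha_j)}$ is constant, so at most $d_j$ of the $g_i'$ depend on $\alpha_j$. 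This part is immediate.

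For \ref{compound-ii}, assume $f(\Sigma_j(g_0,\dots,g_{n-1}))\subseteq\rho$; here the $g_i$ should be taken in $\rho$, as in Lemma~\ref{lem:PolofGQuord}. Let $(h_0,\dots,h_{n-1})\in\Sigma(g_0^{(\alpha_j)},\dots,g_{n-1}^{(\alpha_j)})$. Since currying is a bijection, there are unique $g_i'$ with $(g_i')^{(\alpha_j)}=h_i$, and by definition the tuple of these $g_i'$ lies in $\Sigma_j(g_0,\dots,g_{n-1})$. Hence $f(g_0',\dots,g_{n-1}')\in\rho$. Lemma~\ref{lem:1forThm}\ref{1forThm-iii} then gives $f^{(\alpha_j)}(h_0,\dots,h_{n-1})\in\rho^{(\alpha_j)}$. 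So
\[
f^{(\alpha_j)}\bigl(\Sigma(g_0^{(\alpha_j)},\dots,g_{n-1}^{(\alpha_j)})\bigr)\subseteq\rho^{(\alpha_j)},
\]
with each $g_i^{(\alpha_j)}\in\rho^{(\alpha_j)}$ by Lemma~\ref{lem:1forThm}\ref{1forThm-i}.

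We now want Lemma~\ref{lem:PolofGQuord} for the transitive simple-arity relation $\rho^{(\alpha_j)}$ over $\tilde A$, applied to the operation $f^{(\alpha_j)}:\tilde A^n\to\tilde A$. The one point to check is that this lemma allows an arbitrary $n$-ary operation on the base set, and not only one induced from $A$. Its proof uses only transitivity and the definition of $h$ through $f$, so it applies verbatim with $\tilde A$ in place of $A$. The lemma yields $f^{(\alpha_j)}(g_0^{(\alpha_j)},\dots,g_{n-1}^{(\alpha_j)})\in\rho^{(\alpha_j)}$, and Lemma~\ref{lem:1forThm}\ref{1forThm-iii} translates this back to $f(g_0,\dots,g_{n-1})\in\rho$.

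The main obstacle is bookkeeping rather than mathematics. One must check that the translations and constants in $\Sigma$ are taken in the curried picture, that is, with constants from $B_j$ and values in $\tilde A$. This is exactly how $\Sigma_j$ is defined, so the transfer is faithful.
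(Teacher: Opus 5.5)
Your proposal is correct and follows essentially the same route as the paper. You curry along $\alpha_j$ and use Lemma~\ref{lem:1forThm}\ref{1forThm-v} for (i); for (ii) you use Lemma~\ref{lem:1forThm}\ref{1forThm-iii} to move between $\Sigma_j$ and $\Sigma$, then apply Lemma~\ref{lem:PolofGQuord} to $f^{(\alpha_j)}$ and the transitive relation $\rho^{(\alpha_j)}$ over $A^{\alpha/\alpha_j}$. Your extra assumption $g_i\in\rho$ matches how the paper applies Lemma~\ref{lem:PolofGQuord} as stated, and it is harmless since that lemma's proof never actually uses it.
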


\begin{proof}
  Before we embark on the proof, we point out the main idea. By
  definition, $\rho^{(\alpha_j)}$ is a $d_j$-dimensional weak
  generalized quasiorder over $A^{\alpha/ \alpha_j }$, so in
  particular Lemma~\ref{lem:PolofGQuord} applies. Both statements
  follow from transforming the hypothesis of the current lemma into
  hypothesis formulated for $\rho^{(\alpha_j)}$ and then transforming
  our conclusions into the desired conclusions for $\rho$.

  So, let us prove \ref{compound-i}. Suppose that
  $ (g_0', \dots, g_{n-1}') \in \Sigma_j (g_0, \dots, g_{n-1}), $
  which by the definition of $\Sigma_j(g_0, \dots, g_{n-1})$ is
  equivalent to
  \[
    ((g_0')^{(\alpha_j)}, \dots, (g_{n-1}')^{(\alpha_j)}) \in \Sigma
    (g_0^{(\alpha_j)}, \dots, g_{n-1}^{(\alpha_j)}).
  \]
  By definition of
  $\Sigma (g_0^{(\alpha_j)}, \dots, g_{n-1}^{(\alpha_j)})$, at most
  $d_j$-many of the
  $(g_0')^{(\alpha_j)}, \dots, (g_{n-1}')^{(\alpha_j)}$ are
  nonconstant, equivalently (cf.\ Lemma~\ref{lem:1forThm}\ref{1forThm-v}), at most $d_j$-many of the
  $g_0', \dots, g_{n-1}'$ depend on $\alpha_j$.

  Now we prove \ref{compound-ii}. Suppose that
  \[
    f(\Sigma_j(g_0, \dots, g_{n-1})) \subseteq \rho.
  \]
  By definition of $\Sigma_j(g_0, \dots, g_{n-1})$ and \ref{1forThm-iii} of
  Lemma~\ref{lem:1forThm}, this is equivalent to
  \[
    f^{(\alpha_j)}(\Sigma(g_0^{(\alpha_j)}, \dots,
    g_{n-1}^{(\alpha_j)})) \subseteq \rho^{(\alpha_j)}.
  \]

  By definition of a generalized quasiorder of compound
  arity,
  $\rho^{(\alpha_j)}$ is a  $d_j$-dimen\-sional weak
  generalized quasiorder of simple arity $\alpha_{j}$ over $A^{\alpha/ \alpha_j }$, so
  Lemma~\ref{lem:PolofGQuord} applies. We conclude that
  \[
    f^{(\alpha_j)}(g_0^{(\alpha_j)}, \dots, g_{n-1}^{(\alpha_j)})
    \subseteq \rho^{(\alpha_j)}.
  \]
  Applying Lemma~\ref{lem:1forThm}\ref{1forThm-iii} finishes the proof.
\end{proof}

\begin{theorem}\label{thm:PolofGQuordMain2}
  Let $\rho\subseteq A^{\alpha}$ be a $d$-dimensional weak generalized
  quasiorder of rectangular arity
  $\alpha=\alpha_{1}\times\ldots\times\alpha_{s}=B_{1}^{d_{1}}\times\ldots\times
  B_{s}^{d_{s}}$ ($d=d_{1}+\ldots+d_{s}$). Then $\wXi_{d}(\rho)$
  holds. If $\rho$ is a generalized quasiorder the we have $\Xi_{d}(\rho)$.
\end{theorem}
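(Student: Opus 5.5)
We have to show that $f\preserves\rho\iff\trl^{K}_{d}(f)\preserves\rho$ for every $f:A^{n}\to A$, where $K:=\Ima\rho$. The second claim ($\Xi_d$ for a generalized quasiorder) then follows at once, because reflexivity gives $\Ima\rho=A$ and $\Xi_d(\rho)\iff\wXi_d(\rho)\land\Ima\rho=A$.

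For the direction ``$\Longrightarrow$'' we use Lemma~\ref{lem:WeaklyReflexiveContainsImage}. It gives $g_a\in\rho$ for all $a\in K$, so $\cC_K\preserves\rho$. Every $g\in\trl^K_d(f)$ lies in $\Sg{\{f\}\cup\cC_K}$, hence preserves $\rho$.

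For ``$\Longleftarrow$'' we first use Lemma~\ref{lem:weaktrlApreserves}, which applies since $\cC_K\preserves\rho$. If $n\le d$, the fictitious translations preserving $\rho$ already give $f\preserves\rho$, as in the proof of Lemma~\ref{lem:trlApreserves}. So assume $n>d$, and let $g_0,\dots,g_{n-1}\in\rho$. We want $f(g_0,\dots,g_{n-1})\in\rho$, and we prove it by an iterated application of Lemma~\ref{lem:PolofGQuordCompound}\ref{compound-ii}, one block $\alpha_j$ at a time. The statement we prove by downward induction on $j$ is:
\begin{center}
\emph{Claim$(j)$: for all $(g'_0,\dots,g'_{n-1})\in\rho^n$ such that, for every $\ell> j$, at most $d_\ell$ of the $g'_i$ depend on $\alpha_\ell$, we have $f(g'_0,\dots,g'_{n-1})\in\rho$.}
\end{center}
Claim$(0)$ is what we want, since then there is no restriction on the tuple.

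The base case is Claim$(s)$. Here, in the tuple, at most $d_\ell$ entries depend on $\alpha_\ell$ for every $\ell$. Hence at most $d=\sum d_\ell$ of the $g'_i$ are nonconstant, and the remaining $g'_i$ are constant maps with values $c_i\in K$. Substituting the $c_i$ into $f$ yields a basic translation over $K$ with at most $d$ free variables. Padding with fictitious variables gives some $f'\in\trl^K_d(f)$, and $f(g'_0,\dots,g'_{n-1})=f'(\dots)$ applied to at most $d$ elements of $\rho$. (If fewer than $d$ entries are nonconstant, we fill the remaining slots with arbitrary elements of $\rho$ in fictitious positions.) Since $f'\preserves\rho$, the value lies in $\rho$.

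For the inductive step from Claim$(j)$ to Claim$(j-1)$, take a tuple satisfying the hypothesis of Claim$(j-1)$. By Lemma~\ref{lem:PolofGQuordCompound}\ref{compound-ii} applied with index $j$, it suffices to show $f(g''_0,\dots,g''_{n-1})\in\rho$ for every $(g''_i)\in\Sigma_j(g'_0,\dots,g'_{n-1})$. Each such $g''_i$ lies in $\rho$: it is a translation in the $\alpha_j$ direction, so $(g''_i)^{(\alpha_j)}\in\trl_{d_j}((g'_i)^{(\alpha_j)})\subseteq\rho^{(\alpha_j)}$ by internal reflexivity of $\rho^{(\alpha_j)}$ and Lemma~\ref{lem:adjointproperty}\ref{adj-i}, and Lemma~\ref{lem:1forThm}\ref{1forThm-i} then gives $g''_i\in\rho$. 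By Lemma~\ref{lem:PolofGQuordCompound}\ref{compound-i}, at most $d_j$ of the $g''_i$ depend on $\alpha_j$. By Lemma~\ref{lem:dependence}, every $g'_i$ that was constant at some $\alpha_\ell$ with $\ell>j$ stays constant there, so the bounds for $\ell>j$ persist. Thus $(g''_i)$ satisfies the hypothesis of Claim$(j)$, and the induction closes.

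The main obstacle is the base case. There one must carefully check that a tuple of R-relation elements, each either constant (with value in $K$) or arbitrary, with the nonconstant ones numbering at most $d$, realizes $f(g'_0,\dots,g'_{n-1})$ as an application of a genuine $d$-translation over $K$. One also needs that an element constant at every block is a constant map whose value lies in $\Ima\rho$. Together these let Lemma~\ref{lem:weaktrlApreserves} be invoked cleanly; the bookkeeping of positions is routine.
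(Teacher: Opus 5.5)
Your proof is correct and is essentially the paper's own proof. Your Claim$(j)$ is the paper's statement $f(T_j)\subseteq\rho$, the base case is the same (at most $d$ nonconstant entries, the rest constant with values in $\Ima\rho$, so a $d$-translation over $\Ima\rho$ applies), and the inductive step uses the same ingredients: Lemma~\ref{lem:PolofGQuordCompound}, internal reflexivity of $\rho^{(\alpha_j)}$ with Lemma~\ref{lem:adjointproperty}\ref{adj-i}, and Lemma~\ref{lem:dependence}.

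One indexing slip needs fixing. The bound in Claim$(j)$ must be imposed for $\ell\le j$, and in the inductive step the bounds that persist are those for $\ell<j$. With $\ell>j$ as you wrote it, Claim$(s)$ is unrestricted and Claim$(0)$ is fully restricted, which contradicts your own description of the base case and of the conclusion.
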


\begin{proof} Let $\rho$ be a weak generalized quasiorder.
  Fix $n \in \N$ and let $f:A^{n}\to A$. We have to show
   $f\preserves\rho\iff \trl^{\Ima\rho}_{d}(f)\preserves\rho$ (cf.~\ref{weakXi-3}).
The implication ``$\Longrightarrow$'' is trivially
  fulfilled, in fact 
$\trl^{\Ima\rho}_{d}(f)\subseteq\Sg{\{f\}\cup \cC_{\Ima\rho}}\subseteq\Pol\rho$ 
 since internal reflexivity of $\rho$ implies that each constant
 $c_{a}$ with $a\in\Ima\rho$ is a
  polymorphism of $\rho$ (cf.~Lemma~\ref{lem:WeaklyReflexiveContainsImage}).





The crucial part of the proof is the implication
  ``$\Longleftarrow$''. Thus suppose 
  $\trl_d^{\Ima\rho}(f) \preserves \rho$. We want to show $f\preserves\rho$,
  i.e., $f(T_{0})\subseteq\rho$ for $T_{0}=\rho^{n}$.

We analyze a decreasing sequence of subsets 
  \[
    \rho^n = T_0 \supseteq T_1 \supseteq \dots \supseteq T_s,
  \]
  which are defined as follows. Let $\Phi_0$ be $\top$ (true) and
  $\Phi_j(g_0, \dots, g_{n-1})$ be the statement given by:
  
  \begin{align*}
    \Phi_j(g_0, \dots, g_{n-1}):\iff&
    \textit{at most $d_j$-many of the $g_0, \dots, g_{n-1}$ depend on
      $\alpha_j$}\\
&\textit{(at least $(n-d_j)$-many are constant at
      $\alpha_j$)}.
  \end{align*}

  We then set
  \[
    T_j:= \{ (g_0, \dots, g_{n-1}) \in \rho^n\mid \bigwedge_{0 \leq \ell
      \leq j} \Phi_\ell(g_0, \dots, g_{n-1})\}.
  \]

  
We prove that $f(T_j) \subseteq \rho$ for all $j \in \{0, \dots, s\}$
  by induction (in the reverse order) on $j$ which will finish the
  proof with $j=0$.
  
  \underline{Induction basis ($j=s$):} Let us establish that
  $f(T_s) \subseteq \rho$. 

Let $(g_0, \dots, g_{n-1}) \in T_s$. Below we are going to show that
there are at least $n-d$ many $g_{i}$'s which are constant (with
constants from $\Ima\rho$ since $g_{i}\in\rho$). Thus 
there exists $f'\in\trl_{d}^{\Ima\rho}(f)$ such that
$f(g_{0},\dots,g_{n-1})=f'(g_{j_{0}},\dots,g_{j_{d-1}})$ (for some
$j_{0},\dots,j_{d-1}\in\{0,\dots,n-1\}$ with possibly nonconstant $g_{j}$) which implies
$f(g_{0},\dots,g_{n-1})\in\rho$ since, by assumption, $f'\preserves\rho$ and
$g_{0},\dots,g_{n-1}\in\rho$, and we are done.

In fact,
since, by definition of $T_{s}$, at most $d_{\ell}$ many $g_{i}$'s
depend on $\alpha_{\ell}$ for 
each $\ell\in\{1,\dots,s\}$, there are at most $d=d_{1}+\ldots+d_{s}$
many $g_{i}$ ($i\in\{0,\dots,n-1\}$) which may depend on at least one
$\alpha_{\ell}$. Consequently, at least $n-d$ many of the $g_{i}$'s
are constant at each $\alpha_{\ell}$ and therefore constant at all, which was
to be shown.

  \underline{Induction step (from $j+1$ to $j$):} Suppose that
  $f(T_{j+1}) \subseteq \rho$, for $j \in \{0, \dots, s-1\}$. We want
  to see that $f(T_j) \subseteq \rho$ also. So, let
  $(g_0, \dots, g_{n-1}) \in T_j$. We have to show  that
  $f(g_0, \dots, g_{n-1}) \in \rho$. By
  Lemma~\ref{lem:PolofGQuordCompound}\ref{compound-ii}, 
it is enough to show that
  $f(\Sigma_{j+1}(g_{0},\dots,g_{n-1}))\subseteq\rho$. 
Since $f(T_{j+1})\subseteq\rho$ by induction hypothesis, we are done
if we can prove  $\Sigma_{j+1}(g_0, \dots, g_{n-1}) \subseteq
T_{j+1}$. 

Thus let $(g_0', \dots, g_{n-1}') \in \Sigma_{j+1}(g_0, \dots,
g_{n-1})$. We must show $(g_0', \dots, g_{n-1}') \in T_{j+1}$, i.e.,
we have to show that both
\[
(g_0', \dots, g_{n-1}') \in \rho^n
\qquad
\text{and}
\qquad
\bigwedge_{0 \leq \ell \leq j+1} \Phi_{\ell}(g'_0,\dots, g'_{n-1})
\] 
hold.

Let us first show that $(g_0', \dots, g_{n-1}') \in \rho^{n}$. 
Because $(g_0', \dots, g_{n-1}')\in\Sigma_{j+1}(g_0, \dots, g_{n-1})$,
we have, by Definition~\ref{def:SigmaofTuples2}, 
$((g_0')^{(\alpha_{j+1})}, \dots, (g_{n-1}')^{(\alpha_{j+1})}) \in
  \Sigma(g_0^{(\alpha_{j+1})}, \dots, g_{n-1}^{(\alpha_{j+1})})$, 
which, by Definition~\ref{def:SigmaofTuples}, is equivalent to 
\[
(g_0')^{(\alpha_{j+1})} \in \trl_{d_{j+1}} \{  g_0^{(\alpha_{j+1})}\},
\dots, 
(g_{n-1}')^{(\alpha_{j+1})} \in \trl_{d_{j+1}} \{ g_{n-1}^{(\alpha_{j+1})}\}.
\]
Since $\rho$ is internally reflexive, we have by
Definition~\ref{Adef2:Refl+Trans}\ref{Adef2:internally-reflexive} that
$\rho^{(\alpha_{j+1})}$ is internally reflexive. We apply
Lemma~\ref{lem:adjointproperty}\ref{adj-i} and conclude that
$\trl_{d_{j+1}}\rho^{(\alpha_{j+1})} \subseteq
\rho^{(\alpha_{j+1})}$. Each of the $g_i^{(\alpha_{j+1})}$ belongs to
$\rho^{(\alpha_{j+1})}$, so we conclude that each of the
$(g_i')^{(\alpha_{j+1})}$ also belongs to $\rho^{(\alpha_{j+1})}$,
equivalently (cf.~\ref{lem:1forThm}\ref{1forThm-i}), that 
$(g_0',\dots, g_{n-1}') \in \rho^n$. 

Now we will show that
$\bigwedge_{0 \leq \ell \leq j+1} \Phi_{\ell}(g'_0,\dots, g'_{n-1})$ holds,
which will finish the proof. Indeed, $(g_0, \dots, g_{n-1}) \in T_j$,
so it satisfies $\Phi_{\ell}$ for all $ \ell \in \{1, \dots, j\}$. Fix
a particular such $\ell$ and suppose that $g_i$ is constant at
$\alpha_\ell$ (recall that by definition of $T_j$, there are at least
$(n-d_\ell)$-many such $g_i$). Then, for
$(g_0', \dots, g_{n-1}') \in \Sigma_{j+1}(g_0, \dots, g_{n-1})$,
Lemma~\ref{lem:dependence} guarantees that $g_i'$ also is constant
at $\alpha_j$, since by definition of $\Sigma_{j+1}$, we know that
$(g'_i)^{(\alpha_{j+1})} \in \trl_{d_{j+1}}\{g_i^{(\alpha_{j+1})}\}$. 
Since there are at least $(n-d_\ell)$-many of the $g_i$ that do
not depend on $\alpha_\ell$, we conclude there are at least
$(n-d_\ell)$-many of the $g_i'$ that do not depend on $\alpha_\ell$,
which proves that $\Phi_\ell(g_0', \dots, g_{n-1}') $ holds. This
argument works for every $\ell \in \{1, \dots, j \}$, so we conclude
that $\bigwedge_{0 \leq \ell \leq j} \Phi_\ell(g'_0, \dots, g'_{n-1})$
holds. Since it follows from
Lemma~\ref{lem:PolofGQuordCompound}\ref{compound-i} that $(g_0',
\dots, g_{n-1}')$ also satisfies $\Phi_{j+1}$, we are done.
\end{proof}

As a direct consequence of Theorem~\ref{thm:PolofGQuordMain2} we get
($\dGQuord$ denotes the $d$-dimensional generalized quasiorders, which
are defined in a formally restricted sense in the next section):
\begin{corollary}\label{A1c}
Let $F\subseteq\Op(A)$ and $Q\subseteq\dGQuord(A)$. Then we have:
  \begin{enumerate}[label=\textup{(\roman*)}]
  \item\label{A1ci} $\dGQuord(A,F)=\dGQuord(A,\trl_{d}F)$.
  \item\label{A1cii} $\Xi_{d}(Q)$ holds, i.e., $\Pol Q=(\Pola[d] Q)^{*_{d}}$,
    in particular, 
    $(\Pola[d] Q)^{*_{d}}$ is a 
    clone and $\Pola[d] Q$ is \dR-closed.
  \end{enumerate}
\end{corollary}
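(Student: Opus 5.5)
Both parts are obtained by specializing Theorem~\ref{thm:PolofGQuordMain2} to the $d$-dimensional generalized quasiorders in $Q$. We read $\dGQuord(A,F)$ as the set of $d$-dimensional generalized quasiorders $\rho$ on $A$ with $F\preserves\rho$, i.e.\ $\dGQuord(A)\cap\Inv F$. Every $\rho\in Q$ is reflexive, so $\Ima\rho=A$ and we are always in the non-weak setting: $\Xi_d(\rho)$ holds.

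For \ref{A1ci} we compare the two sets elementwise. For the inclusion ``$\subseteq$'': if $F\preserves\rho$ with $\rho$ a generalized quasiorder, then $\cC\subseteq\Pol\rho$ by reflexivity (Lemma~\ref{lem:WeaklyReflexiveContainsImage} together with $\Ima\rho=A$). Since $\trl_d(F)\subseteq\Sg{F\cup\cC}\subseteq\Pol\rho$, we get $\trl_dF\preserves\rho$. Equivalently, this is the direction ``$\Rightarrow$'' of $\Xi_d(\rho)$. For the inclusion ``$\supseteq$'': if $\trl_d(f)\preserves\rho$ for every $f\in F$, then $\Xi_d(\rho)$ from Theorem~\ref{thm:PolofGQuordMain2} gives $f\preserves\rho$. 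The key point is that the criterion of $\Xi_d$ is applied one $f$ at a time, so the set-level statement follows.

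For \ref{A1cii}, Theorem~\ref{thm:PolofGQuordMain2} gives $\Pol\rho=(\Pola[d]\rho)^{*_d}$ for every $\rho\in Q$. We then intersect over $Q$. On the left, $\Pol Q=\bigcap_{\rho\in Q}\Pol\rho$. On the right, we use that $^{*_d}$ preserves meets (Remark~\ref{rem:MstarPoe}(A)) together with $\Pola[d]Q=\bigcap_{\rho}\Pola[d]\rho$, to get $\bigcap_\rho(\Pola[d]\rho)^{*_d}=(\Pola[d]Q)^{*_d}$. Each $\Pola[d]\rho$ is a clone fragment containing $\cC^{(d)}$, so the definition of $^{*_d}$ in \ref{MstarPoe} applies, and the intersection is again such a fragment. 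This yields $\Xi_d(Q)$.

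The consequences then follow immediately. $(\Pola[d]Q)^{*_d}=\Pol Q$ is a clone. Hence $M:=\Pola[d]Q$ is a clone fragment with constants whose $^{*_d}$-image is a clone, and so it is \dR-closed by Remark~\ref{rem:Rclosure}.

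The only step that needs care is the meet-preservation bookkeeping in \ref{A1cii}: the family $Q$ may be infinite, and one must confirm that each $\Pola[d]\rho$ contains the constants. Both points are settled by reflexivity and Remark~\ref{rem:MstarPoe}(A).
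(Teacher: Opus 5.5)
Your proposal is correct and follows essentially the same route as the paper. Part (i) is read off from $\Xi_d(\rho)$ in Theorem~\ref{thm:PolofGQuordMain2}, one $f\in F$ at a time. Part (ii) intersects $\Pol\rho=(\Pola[d]\rho)^{*_d}$ over $\rho\in Q$, using the meet-preservation of $^{*_d}$ from Remark~\ref{rem:MstarPoe}(A). You also spell out details the paper leaves implicit: that each $\Pola[d]\rho$ contains the constants, and that \dR-closedness follows via Remark~\ref{rem:Rclosure}.
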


\begin{proof}
\ref{A1ci} directly follows from
Theorem~\ref{thm:PolofGQuordMain2}. Concerning \ref{A1cii}, 
we have 

$\Pol Q=\bigcap_{\rho\in Q}\Pol\rho
=_{\ref{thm:PolofGQuordMain2},\ref{def:Xi}\ref{Xi3}}\bigcap_{\rho\in Q}(\Pola[d]\rho)^{*_{d}}
=_{\ref{rem:MstarPoe}}(\bigcap_{\rho\in Q}\Pola[d]\rho)^{*_{d}}
=(\Pola[d] Q)^{*_{d}}$, i.e., $\Xi_{d}(Q)$. 
\end{proof}


\section{The Galois connection $\boldsymbol{\Pola[d]-\dGQuord}$
}\label{sec:PolQuord}

In this section we consider the Galois connection $\Pola[d]-\dGQuord$
($\Pol-\dGQuord$, resp.), characterize the Galois closure $\Pola[d]\dGQuord
M$ ($\Pol\dGQuord M$, resp.) for $M\subseteq A^{A^{d}}$
(Theorem~\ref{thm:dPolQuord}, Corollary~\ref{A3cor}\ref{A3a}), and answer
the question which $d$-ary clone fragments are \dR-closed from the
point of view of operations (\ref{A3cor}\ref{A3b}) as well as of
relations (\ref{A3cor}\ref{A3c}). Moreover, we characterize sets
$Q\subseteq\Rel(A)$ 
of relations satisfying our motivating property $\Xi_{d}$ (\ref{A3Xi}).

Recall that $|\alpha|$ is the relational arity of
$\rho\in\RRela[\alpha](A)$ (cf.~\ref{def:Rrelation}). In order to keep the cardinality of R-relations with
bounded relational arity finite (for finite
$A$) we can and shall restrict to ``canonical'' rectangular aritities of the
form $\alpha=m_{1}^{d_{1}}\times\ldots\times m_{s}^{d_{s}}$ with
$d_{i},m_{i}\in \N_{+}$ (where the ordinals $m_{i}$ are considered as
sets $m_{i}=\{0,1,\dots,m_{i}-1\}$, $i\in\{1,\dots,s\}$), as mentioned
already in Remark~\ref{rem:Rrelation}, i.e., we define
\begin{align*}
   \dRRel(A)&:=\{\rho\in\RRela[\alpha](A)\mid \alpha\text{
     canonical and } d_{\alpha}=d\},\\
  \RRel(A)&:=\bigcup_{d\in\N_{+}}\dRRel(A)
   .
\end{align*}
If we restrict to generalized quasiorders then we use the corresponding notation
$\dGQuord(A)$ and $\GQuord(A)$.

Moreover, if we consider a set $M\subseteq \Opa[d](A)=A^{A^{d}}$ of
$d$-ary operations (e.g., a $d$-ary clone fragment) as $d$-dimensional
R-relation of simple arity $\alpha=A^{d}$ then it is convenient to
think of $A$ as an ordinal (to make $\alpha$ canonical), i.e., to
identify it with $|A|$, thus $M\in\dRRel(A)$ according to the above
definition.

It is a well-known fact that any $d$-ary operation which preserves a
$d$-ary clone fragment (considered as R-relation) must belong to it
(apply the operation to the 
projections). We explicitly want to mention this here:

\begin{lemma}\label{dPol}
  Let $M\leq A^{A^{d}}$ be a $d$-ary clone fragment. Then  
$M=\Pola[d]M$ (where on the right-hand side $M$ is
considered as R-relation in $\RRela[A^{d}](A)$).\qed
\end{lemma}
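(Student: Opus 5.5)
The proof is a double inclusion. For $\Pola[d]M\subseteq M$ I will use the standard ``apply to projections'' argument. For $M\subseteq\Pola[d]M$ I will use that $M$ is closed under composition.

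Regard $M\subseteq A^{A^{d}}$ as an R-relation of simple arity $\alpha=A^{d}$. By Definition~\ref{def:preservation}, a $d$-ary operation $f$ preserves $M$ if and only if for all $g_{0},\dots,g_{d-1}\in M$ the map $f(g_{0},\dots,g_{d-1}):A^{d}\to A$, defined pointwise by $X\mapsto f(g_{0}(X),\dots,g_{d-1}(X))$, lies in $M$. This pointwise map is exactly the usual composition $f\circ(g_{0},\dots,g_{d-1})$ of operations on $A$. I will make that identification explicit first, since it is the only place the R-relation viewpoint enters.

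For $M\subseteq\Pola[d]M$: let $f\in M$ and $g_{0},\dots,g_{d-1}\in M$. Since $M=\Sg[A]{M}^{(d)}$ by Definition~\ref{clonefragment}, all of $f,g_{0},\dots,g_{d-1}$ lie in the clone $\Sg[A]{M}$. That clone is closed under composition, so $f\circ(g_{0},\dots,g_{d-1})\in\Sg[A]{M}$. This composite is $d$-ary, hence it lies in $\Sg[A]{M}^{(d)}=M$. Therefore $f\preserves M$.

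For $\Pola[d]M\subseteq M$: let $f\in\Pola[d]M$. Every clone contains the projections, so the $d$-ary projections $e^{(d)}_{0},\dots,e^{(d)}_{d-1}$ lie in $\Sg[A]{M}^{(d)}=M$. Applying $f$ to them gives $f(e^{(d)}_{0},\dots,e^{(d)}_{d-1})=f\in M$.

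No step is a real obstacle. The only point needing care is using the clone-fragment hypothesis $M=\Sg[A]{M}^{(d)}$ correctly: it supplies both closure under composition and the presence of the projections.
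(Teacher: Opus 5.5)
Your proof is correct and follows the route the paper indicates. The paper proves $\Pola[d]M\subseteq M$ by applying $f$ to the $d$-ary projections (its parenthetical hint just before the lemma), and it leaves the reverse inclusion $M\subseteq\Pola[d]M$, which follows from closure of $\Sg[A]{M}$ under composition, implicit; you have simply written both inclusions out.
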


The following proposition for clones is the (extended) analogue of
Proposition~\ref{prop:doublearrowisminion} for minions:

\begin{proposition}\label{prop:StarisCloneIffGQuord}

  Let $M \subseteq \Op^{(d)}(A)$
  be a $d$-ary clone fragment ($M\leq A^{A^d}$). 
 The following are equivalent:
  \begin{enumerate}[label=\textup{(\roman*)}]
  \item \label{StarisClone-i} $M^{*_d}$ is a clone.
  \item \label{StarisClone-ii} $M$  considered as an $R$-relation of
    simple arity $\alpha= A^d$ is a $d$-dimensional generalized
    quasiorder.
  \item \label{StarisClone-iii} $M^{*_{d}}=\Pol M$ (for the right side,
    $M$ is considered as R-relation as in \ref{StarisClone-ii}).
  \end{enumerate}
  \begin{remarknote}
    Condition \ref{StarisClone-ii} also implies that 
    $(M^{*_{d}})^{(n)}$ is an $n$-dimensional
    generalized quasiorder for all $n\geq d$. 
  \end{remarknote}
\end{proposition}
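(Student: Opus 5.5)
The plan is to prove the cycle \ref{StarisClone-iii}$\Rightarrow$\ref{StarisClone-i}$\Rightarrow$\ref{StarisClone-ii}$\Rightarrow$\ref{StarisClone-iii}. Throughout I assume, as in Definition~\ref{MstarPoe}, that $\cC^{(d)}\subseteq M$, so that $M^{*_d}$ is defined. The implication \ref{StarisClone-iii}$\Rightarrow$\ref{StarisClone-i} is immediate, since $\Pol M$ is a polymorphism clone.

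For \ref{StarisClone-ii}$\Rightarrow$\ref{StarisClone-iii}, I apply Theorem~\ref{thm:PolofGQuordMain2} (case $s=1$) to the generalized quasiorder $M\subseteq A^{A^d}$. This gives $\Xi_d(M)$, which by \ref{def:Xi}\ref{Xi3} reads $\Pol M=(\Pola[d]M)^{*_d}$. Lemma~\ref{dPol} gives $\Pola[d]M=M$, and hence $\Pol M=M^{*_d}$.

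For \ref{StarisClone-i}$\Rightarrow$\ref{StarisClone-ii}, the first step is to record that $(M^{*_d})^{(d)}=M$.
\begin{itemize}
\item For $f\in M$, every $d$-translation of $f$ is obtained by substituting constants and adding fictitious variables. This keeps us in $M$, since $M$ is a clone fragment containing $\cC^{(d)}$. Hence $\trl_d(f)\subseteq M$.
\item Conversely, if $f$ is $d$-ary and $f\in M^{*_d}$, then $f\in\trl_d(f)\subseteq M$.
\end{itemize}
Every constant $c_a$ lies in $M^{*_d}$, because its $d$-translations are $d$-ary constants. So Lemma~\ref{lem:clonesGiveReflexive} applied to the clone $F=M^{*_d}$ shows that $M=F^{(d)}$ is reflexive; in particular it is internally reflexive.

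For transitivity, let $h\in\ostar(M)\subseteq A^{A^{d+1}}$. Since $M$ is internally reflexive, Proposition~\ref{prop:starcharacterization} gives $M^{*_d}=M^{\leftrightarrow}$, whose $(d+1)$-ary part is $M_{d+1}=\ostar(M)$. Hence $h\in M^{*_d}$. Because $M^{*_d}$ is a clone, it is closed under identification of variables. Therefore $\bDelta(h)\subseteq(M^{*_d})^{(d)}=M$, which is transitivity.

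For the closing remark, the same identification gives $(M^{*_d})^{(n)}=M_n=\ostar^{n-d}(M)$ for $n\ge d$. By Lemma~\ref{lm:adjointproperties}\ref{adjointprops3} this is a weak generalized quasiorder. It contains all constants, since these lie in the clone $M^{*_d}$, so it is reflexive by Remark~\ref{rm:equivalentreflexivity}.

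I expect the main obstacle to be bookkeeping rather than ideas. The step to check carefully is the identification $(M^{*_d})^{(n)}=\ostar^{n-d}(M)$ via Proposition~\ref{prop:starcharacterization}, where $M^{*_d}$ is read over $B=A$ without nullary operations, versus $\rho^{*_d}$ in the sense of Definition~\ref{def:star}. This discrepancy only affects arity $0$ and is irrelevant for arities $\ge d\ge1$.
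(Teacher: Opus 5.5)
Your proof is correct and follows essentially the same route as the paper. You prove (iii)$\Rightarrow$(i) trivially, (ii)$\Rightarrow$(iii) via Theorem~\ref{thm:PolofGQuordMain2} and Lemma~\ref{dPol}, and (i)$\Rightarrow$(ii) via reflexivity from Lemma~\ref{lem:clonesGiveReflexive} together with $M^{*_d}=M^{\leftrightarrow}$ from Proposition~\ref{prop:starcharacterization}. The differences are minor: you unfold the needed direction of Proposition~\ref{prop:doublearrowisminion} (closure of $M^{*_d}$ under identification of variables gives $\bDelta(\ostar(M))\subseteq M$) instead of citing it, and you explicitly check reflexivity of $(M^{*_d})^{(n)}$ in the closing remark, which the paper leaves implicit.
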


\begin{proof} \ref{StarisClone-i}$\Rightarrow$\ref{StarisClone-ii}:
Since $M$ is a clone fragment we have  $\cC^{(d)}\subseteq M$  
  by Remark \ref{rem:MstarPoe}(B). From
  Lemma~\ref{lem:clonesGiveReflexive} we can conclude that $M$ is
  reflexive and thus $M^{\leftrightarrow}=M^{*_{d}}$ by Proposition~\ref{prop:starcharacterization}.
 Assume $M^{*_{d}}$ is a clone.
  Clones are special minions, therefore we can apply
  Proposition~\ref{prop:doublearrowisminion} (with $\rho=M$) to 
   $M$, consequently $M$ is a weak generalized quasiorder.
  Since $M$ is reflexive we get $M\in\GQuord(A)$.

\ref{StarisClone-ii}$\Rightarrow$\ref{StarisClone-iii}:
Suppose that $M \in\GQuorda[\alpha](A)$. We apply
Theorem~\ref{thm:PolofGQuordMain2} (for the case $s=1$) and conclude
that $\Xi_d(M)$ holds, equivalently, that $ \Pol M = (\Pola[d]M)^{*_d} $ (see
Definition~\ref{def:Xi}). Since $\Pola[d]M= M$ by Lemma~\ref{dPol}, we
are done.

\ref{StarisClone-iii}$\Rightarrow$\ref{StarisClone-i} is trivial since
$\Pol Q$ is always a clone for an arbitrary set $Q$ of relations.

The remark follows from
Lemma~\ref{lm:adjointproperties}\ref{adjointprops3} because
$(M^{*_{d}})^{(n)}=\ostar^{n-d}(M)$ by
Proposition~\ref{prop:starcharacterization}.
\end{proof}

\begin{theorem}\label{thm:dPolQuord} Let $M\subseteq A^{A^{d}}$. Then
  we have:
  \begin{align*}
    \Rclose[d]{M}=\Pola[d]\dGQuord M.
  \end{align*}
\end{theorem}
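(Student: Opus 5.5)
We prove the two inclusions separately. Throughout, $\dGQuord M$ denotes the set of all $d$-dimensional generalized quasiorders $\rho\in\dGQuord(A)$ with $M\preserves\rho$. We regard the right-hand side $\Pola[d]\dGQuord M$ as the set of $d$-ary operations preserving every such $\rho$.

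For ``$\supseteq$'' let $N$ be any $d$-ary clone fragment with $M\cup\cC\subseteq N$ such that $N^{*_d}$ is a clone. Proposition~\ref{prop:StarisCloneIffGQuord} then shows that $N$, viewed as an R-relation of simple arity $A^{d}$ (canonical after identifying $A$ with $|A|$), is a $d$-dimensional generalized quasiorder. Since $N$ is a clone fragment containing $M$, each $f\in M$ preserves $N$: applying $f$ coordinatewise to elements of $N$ is composition inside the clone $\Sg[A]{N}$. Hence $N\in\dGQuord M$, and Lemma~\ref{dPol} gives $\Pola[d]\dGQuord M\subseteq\Pola[d]N=N$. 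Intersecting over all such $N$ yields $\Pola[d]\dGQuord M\subseteq\Rclose[d]{M}$.

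For ``$\subseteq$'' put $N:=\Rclose[d]{M}$. By Remark~\ref{rem:Rclosure}, $N$ is a clone fragment containing $M\cup\cC$ and $N^{*_d}$ is a clone. It suffices to show $N\subseteq\Pola[d]\rho$ for each $\rho\in\dGQuord M$. Set $Q:=\dGQuord M$ and $L:=\Pola[d]Q$. By Corollary~\ref{A1c}\ref{A1cii}, $L$ is a clone fragment, $L^{*_d}=\Pol Q$ is a clone, and so $L$ is \dR-closed. Every $\rho\in Q$ is reflexive, so constants preserve $\rho$ and $\cC^{(d)}\subseteq L$. Also $M\subseteq L$ by definition of $Q$. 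Therefore $L$ is one of the fragments occurring in the intersection defining $\Rclose[d]{M}$, which gives $N\subseteq L$.

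The only delicate point is in ``$\supseteq$'': checking that $N$, read as an R-relation, is invariant under $M$ and genuinely lies in $\dGQuord(A)$. This requires the canonical-arity convention and the identification of ``$f$ preserves $N$ as a relation'' with ``$f\circ(g_1,\dots,g_n)\in N$ for $g_i\in N$''. That identification is exactly the remark preceding Lemma~\ref{dPol}. The remaining steps are direct applications of Proposition~\ref{prop:StarisCloneIffGQuord} and Corollary~\ref{A1c}.
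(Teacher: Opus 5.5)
Your proof is correct and follows essentially the same route as the paper. The paper also uses Proposition~\ref{prop:StarisCloneIffGQuord} and Lemma~\ref{dPol} to get $\Pola[d]\dGQuord M\subseteq\Pola[d]\Rclose{M}=\Rclose{M}$, and Corollary~\ref{A1c}\ref{A1cii} to see that $\Pola[d]\dGQuord M$ is \dR-closed and hence contains $\Rclose{M}$. The only differences are cosmetic: the paper applies the first argument just to $N=\Rclose{M}$, which is itself in the family, instead of intersecting over all $N$, and it phrases the second step through monotonicity of the closure operator.
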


\begin{proof} 
At first we observe $M\subseteq\Pola[d]\GQuord M$
(this holds for every Galois connection, cf.~\ref{Galoisconnections}(3)(III)). Moreover, by
Proposition~\ref{prop:StarisCloneIffGQuord}, we have
$\Rclose{M}\in\GQuord(A)$ since $N^{*_{d}}$ is a clone for $N=\Rclose
M$ (cf.\ Definition~\ref{def:Rclosure} and Remark~\ref{rem:Rclosure}).
Further, 
$M\subseteq\Rclose{M}=_{\text{\ref{dPol}}}\Pola[d] \Rclose{M}$, in particular
$M\preserves \Rclose{M}$ (here $\Rclose{M}$ considered as R-relation
of arity $\alpha=A^{d}$).
Consequently we get
$\Rclose{M}\subseteq\Rclose{\Pola[d]\GQuord M}
=_{\ref{A1c}\ref{A1cii}}\Pola[d]\GQuord M\subseteq\Pola[d]{\Rclose{M}}
=\Rclose{M}$, and we are done.
 \end{proof}

In the following Corollary \ref{A3cor}\ref{A3c} we need to apply pp-formulas to
R-relations (namely for using the relational clone closure
$[Q]_{(\exists,\land,=)}$). Therefore we want to treat them as
``usual'' relations (cf.\ \ref{rem:Rrelation}).
This can be achieved just by ``forgetting'' the rectangular structure
(switching from the rectangular arity to a relational arity).
Formally, for $\rho\in\RRela[\alpha](A)$, let 
$\mathring\rho:=\rho\circ\eta\in\Rela[m](A)$ be the $m$-ary
relation ($m:=|\alpha|$) obtained from $\rho$ via an arbitrary
(but then fixed) bijection $\eta:|\alpha|\to\alpha$ as discussed in
Section~\ref{sec:Rrelations}. Clearly, then we can define $\mathring
Q:=\{\mathring\rho\mid\rho\in Q\}$ for $Q\subseteq\RRel(A)$. Note that
$\Pol\rho=\Pol\mathring\rho$ by definition of preservation.

\begin{corollary}\label{A3cor}

  \begin{enumerate}[label=\textup{(\alph*)}]
  \item\label{A3a} $(\Rclose{M})^{*_{d}}=\Pol\dGQuord M$ for $M\subseteq
    A^{A^{d}}$.

  \item\label{A3b} 
   The following are equivalent for a clone fragment $M\leq A^{A^{d}}$:
    \begin{itemize}
    \item[\rm(i)] $M$ is \dR-closed,
   \hfill {\rm(i)$'$} $M^{*_{d}}$ is a clone,
   \hfill {\rm(i)$''$} $M\in\dGQuord(A)$,
    \item[\rm(ii)] $M=\Pola[d] Q$ for some $Q\subseteq\dGQuord(A)$,
    \item[\rm(iii)] $M^{*_{d}}=\Pol Q$ for some $Q\subseteq\dGQuord(A)$,
    \end{itemize}
where the same $Q$ can be taken in {\rm(ii)} and {\rm(iii)}.

  \item\label{A3c}
   The following are equivalent for $Q\subseteq\Rel(A)$:
    \begin{itemize}
    \item[\rm(i)] $\Pola[d] Q$ is \dR-closed,
     {\rm(i)$'$ } \hspace*{-1,5ex} $(\Pola[d] Q)^{*}$ is a clone,
      {\rm(i)$''$ } \hspace*{-1.5ex} ${\Pola[d] Q}\in\dGQuord(A)$,
    \item[\rm(ii)]  $\exists
      Q'\subseteq\dGQuord(A): \Pola[d] Q=\Pola[d] Q'$, 
    \item[\rm(ii)$'$] 
 $\exists Q'\subseteq\gQuord(A):
      \dLOC{}[Q]_{\exists,\land,=}=\dLOC{}[\mathring Q']_{\exists,\lor,=}$\\ 
   (the closures under pp-formulas and $d$-directed unions coincide, cf.~\ref{rem:PolInv}),
    \item[\rm(iii)] $\exists Q'\subseteq\dGQuord(A): (\Pola[d] Q)^{*_{d}}=\Pol Q'$,
    \end{itemize}
where the same $Q'$ can be taken in {\rm(ii)} and {\rm(iii)}.
Instead of `` $\exists Q'\subseteq\dGQuord(A)$'' one can take
`` $\exists\rho\in\dGQuord(A)$'' and $Q'=\{\rho\}$.
 
\end{enumerate}
\end{corollary}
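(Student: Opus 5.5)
Part \ref{A3a} is a direct combination of Theorem~\ref{thm:dPolQuord} and Corollary~\ref{A1c}\ref{A1cii}. Put $Q:=\dGQuord M$. By Theorem~\ref{thm:dPolQuord} we have $\Rclose{M}=\Pola[d]Q$. Applying ${}^{*_d}$ to both sides and then Corollary~\ref{A1c}\ref{A1cii} (which holds for any $Q\subseteq\dGQuord(A)$) gives $(\Rclose{M})^{*_d}=(\Pola[d]Q)^{*_d}=\Pol Q$.

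For \ref{A3b} I would prove the cycle (i)$\Leftrightarrow$(i)$'\Leftrightarrow$(i)$''\Rightarrow$(ii)$\Rightarrow$(iii)$\Rightarrow$(i)$'$.
\begin{itemize}
\item (i)$\Leftrightarrow$(i)$'$ is Remark~\ref{rem:Rclosure}.
\item (i)$'\Leftrightarrow$(i)$''$ is Proposition~\ref{prop:StarisCloneIffGQuord}.
\item (i)$''\Rightarrow$(ii): take $Q=\{M\}$; then $M=\Pola[d]M$ by Lemma~\ref{dPol}.
\item (ii)$\Rightarrow$(iii): apply ${}^{*_d}$ and Corollary~\ref{A1c}\ref{A1cii}, which gives $M^{*_d}=(\Pola[d]Q)^{*_d}=\Pol Q$ with the same $Q$.
\item (iii)$\Rightarrow$(i)$'$ is trivial, since $\Pol Q$ is always a clone.
\end{itemize}
To show that the same $Q$ works in (iii)$\Rightarrow$(ii), I would check that $(M^{*_d})^{(d)}=M$. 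For a $d$-ary $g$ we have $g\in\trl_d(g)$, so $(M^{*_d})^{(d)}\subseteq M$. Conversely, a clone fragment containing $\cC^{(d)}$ is closed under $d$-translations (it is internally reflexive by Lemma~\ref{lem:clonesGiveReflexive}, then apply Lemma~\ref{lem:adjointproperty}\ref{adj-i}), so $M\subseteq(M^{*_d})^{(d)}$. Taking $d$-ary parts in (iii) then yields $M=(\Pol Q)^{(d)}=\Pola[d]Q$.

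Part \ref{A3c} is obtained by applying \ref{A3b} to the clone fragment $M:=\Pola[d]Q$. This settles (i), (i)$'$ and (i)$''$.
\begin{itemize}
\item (i)$\Rightarrow$(ii): take $Q'=\{\rho\}$ with $\rho:=\Pola[d]Q\in\dGQuord(A)$ (from (i)$''$), using Lemma~\ref{dPol}. This also justifies the closing remark that a single $\rho$ suffices.
\item (ii)$\Rightarrow$(i): $\Pola[d]Q'$ is \dR-closed by Corollary~\ref{A1c}\ref{A1cii}.
\item (ii)$\Leftrightarrow$(iii) with the same $Q'$: this is exactly (ii)$\Leftrightarrow$(iii) of \ref{A3b} applied to $M=\Pola[d]Q=\Pola[d]Q'$.
\item (ii)$\Leftrightarrow$(ii)$'$: this is a Galois-connection fact. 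Since $\Pola[d]$ and $\Inv$ form a Galois connection, $\Pola[d]Q=\Pola[d]\mathring Q'$ holds iff $\Inv\Pola[d]Q=\Inv\Pola[d]\mathring Q'$, using $\Pola[d]\Inv\Pola[d]=\Pola[d]$. By Remark~\ref{rem:PolInv}, both sides of the latter equation are the $d$-local closures of the generated relational clones, and $\Pol\rho=\Pol\mathring\rho$ justifies passing from R-relations to ordinary relations.
\end{itemize}

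The only step I expect to need care is the constants bookkeeping. Both ${}^{*_d}$ and \dR-closedness are defined only for fragments containing $\cC^{(d)}$, whereas $\Pola[d]Q$ for arbitrary $Q$ need not contain the constants. I would handle this by noting that each of the conditions (i)$''$, (ii) and (iii) forces $\cC\subseteq\Pol Q$: for (i)$''$ by reflexivity, and for (ii)/(iii) by Corollary~\ref{A1c} and the remarks after Definition~\ref{def:Xi}. So in every direction the hypothesis that $M$ contains the constants is available exactly when it is needed.
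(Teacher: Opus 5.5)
Your proof is correct and follows the paper's argument essentially step by step. Part (a) uses Theorem~\ref{thm:dPolQuord} together with Corollary~\ref{A1c}\ref{A1cii}; part (b) uses Remark~\ref{rem:Rclosure}, Proposition~\ref{prop:StarisCloneIffGQuord} and Corollary~\ref{A1c}; part (c) specializes (b) to $M=\Pola[d]Q$ and adds the $\Pola[d]$--$\Inv$ Galois connection. The only deviations are cosmetic: for (i)$\Rightarrow$(ii) in (b) you take $Q=\{M\}$ via Lemma~\ref{dPol}, where the paper takes $Q=\dGQuord M$ via Theorem~\ref{thm:dPolQuord}, and you additionally check $(M^{*_d})^{(d)}=M$ to obtain the ``same $Q$'' clause also from (iii) back to (ii), which the paper leaves implicit.
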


\begin{proof}
  \ref{A3a}: Let $Q:=\dGQuord M$. Then $\Xi_{d}(Q)$ by Theorem~\ref{thm:PolofGQuordMain2}, i.e.,
$\Pol Q=(\Pola[d] Q)^{*_{d}}$ (cf.\ Definition~\ref{def:Xi}\ref{Xi1}). 
Thus $\Pol Q=(\Pola[d]\dGQuord M)^{*}=(\Rclose[d]{M})^{*_{d}}$ by
Theorem~\ref{thm:dPolQuord}.

\ref{A3b}: For (i)$\iff$(i)$'$ $\iff$(i)$''$ see
Remark~\ref{rem:Rclosure} and
Proposition~\ref{prop:StarisCloneIffGQuord}.

(i)$\implies$(ii): Take $Q:=\dGQuord M$. If $M$ is $\dR$-closed, then
$M=\Rclose[d]{M}=_{\ref{thm:dPolQuord}}\Pola[d] Q$.

(ii)$\implies$(iii): $(\Pola[d] Q)^{*_{d}}=\Pol Q$ directly follows from
Corollary~\ref{A1c}\ref{A1cii}.

(iii)$\implies$(i)$'$ is obvious, because $M^{*_{d}}=\Pol Q$ is a clone.

\ref{A3c}: This is just \ref{A3b} for $M=\Pola[d] Q$. (ii)$\iff$(ii)$'$
follows from the properties of the Galois connection $\Pola[d]-\Inv$
(in particular $\dLOC[d][Q]_{\exists,\land,\lor,=}=\Inv\Pola[d] Q$,
cf.~\ref{rem:PolInv}).
Further note, that, instead of arbitrary $Q'\subseteq\dGQuord(A)$ we
can take the set $\{ M\}$ with the single R-relation $M=\Pola[d]Q'$ since 
$\Pol Q'=_{\ref{A1c}\ref{A1cii}}(\Pola[d]Q')^{*_{d}}=M^{*_{d}}=_{\ref{dPol}}(\Pola[d]M)^{*_{d}}=\Pol M$.
\end{proof}

Now we are also able to answer the question which (sets of) relations
satisfy the property $\Xi_{d}$ (cf.\ Definition~\ref{def:Xi}):

\begin{proposition}\label{A3Xi}  The following are equivalent for $Q\subseteq\Rel(A)$:
  \begin{itemize}
    \item[\rm(i)] $\Xi_{d}(Q)$ holds, i.e., $\Pol Q=(\Pola[d] Q)^{*_{d}}$,

    \item[\rm(ii)] $\exists Q'\subseteq\dGQuord(A): 
              \Pol Q=\Pol Q'$ $(=\Pol\mathring Q')$,

    \item[\rm(ii)$'$] $\exists Q'\subseteq\dGQuord(A):
      [Q]_{\exists,\land,=}=[\mathring Q']_{\exists,\land,=}$\\ 
   (the closures under primitive positive formulas coincide),
\item[\rm(iii)]

There exists a $d$-ary clone fragment $M\leq
  A^{A^{d}}$ containing all constants $\cC^{(d)}$ such that $M\in\dGQuord(A)$ and
  $[Q]_{\exists,\land,=}=[\mathring M]_{\exists,\land,=}$. 
    \end{itemize}
  \end{proposition}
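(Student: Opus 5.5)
We prove the cycle (i)$\Rightarrow$(iii)$\Rightarrow$(ii)$\Rightarrow$(i), and treat (ii)$\iff$(ii)$'$ separately.

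For (ii)$\iff$(ii)$'$ we use the classical Galois connection $\Pol$--$\Inv$ (Remark~\ref{rem:PolInv}). Since $\Pol Q'=\Pol\mathring Q'$, the equality $\Pol Q=\Pol\mathring Q'$ is equivalent to $\Inv\Pol Q=\Inv\Pol\mathring Q'$. This in turn is exactly $[Q]_{\exists,\land,=}=[\mathring Q']_{\exists,\land,=}$.

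\emph{(i)$\Rightarrow$(iii).} Put $M:=\Pola[d]Q$, a $d$-ary clone fragment. By (i), $\Pol Q=M^{*_d}$, so $M^{*_d}$ is a clone. By the remarks after Definition~\ref{def:Xi} (via $e_0^{(d+1)}\in\Pol Q=M^{*_d}$ and Remark~\ref{rem:MstarPoe}(B)), $M$ contains $\cC^{(d)}$. Proposition~\ref{prop:StarisCloneIffGQuord} then gives $M\in\dGQuord(A)$, and in addition $M^{*_d}=\Pol M$, i.e.\ $\Pol Q=\Pol M=\Pol\mathring M$. Passing to $\Inv$ as above yields $[Q]_{\exists,\land,=}=[\mathring M]_{\exists,\land,=}$.

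\emph{(iii)$\Rightarrow$(ii).} Take $Q':=\{M\}$. Equality of the pp-closures is equivalent to $\Pol Q=\Pol\mathring M=\Pol M$.

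\emph{(ii)$\Rightarrow$(i).} By Corollary~\ref{A1c}\ref{A1cii}, $\Xi_d(Q')$ holds, i.e.\ $\Pol Q'=(\Pola[d]Q')^{*_d}$. From $\Pol Q=\Pol Q'$ we get $\Pola[d]Q=(\Pol Q)^{(d)}=\Pola[d]Q'$. Hence
\[
\Pol Q=\Pol Q'=(\Pola[d]Q')^{*_d}=(\Pola[d]Q)^{*_d},
\]
which is (i).

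The only delicate point is in (i)$\Rightarrow$(iii): we must check that $\Pola[d]Q$ really satisfies the hypotheses of Proposition~\ref{prop:StarisCloneIffGQuord}, namely that it is a clone fragment containing the constants. It is a clone fragment as the $d$-ary part of the clone $\Pol Q$. The constants come from applying (i) to the projection $e_0^{(d+1)}$, which is the argument already given after Definition~\ref{def:Xi}. Everything else is routine use of Galois closure properties.
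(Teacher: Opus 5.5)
Your proof is correct and follows the paper's argument. Your (ii)$\Rightarrow$(i), your (i)$\Rightarrow$(iii) via Proposition~\ref{prop:StarisCloneIffGQuord}, and your $\Pol$--$\Inv$ argument for (ii)$\iff$(ii)$'$ are exactly the paper's steps; the only difference is that you close the cycle through (iii) with $Q'=\{M\}$, whereas the paper also proves (i)$\Rightarrow$(ii) directly using $Q':=\dGQuord M$, \dR-closedness and Corollary~\ref{A3cor}\ref{A3a}, which your ordering makes unnecessary.
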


  \begin{proof}
    (i)$\implies$(ii): Assume $\Pol Q=(\Pola[d] Q)^{*_{d}}$, let $M:=\Pola[d] Q$
  and $Q':=\dGQuord M$. $M$ is $\dR$-closed
  (since $M^{*_{d}}$ is a clone), therefore
  $\Pol Q=M^{*_{d}}=(\Rclose{M})^{*_{d}}=_{\ref{A3cor}\ref{A3a}}\Pol
  Q'$.

(ii)$\implies$(i): Assume $\Pol Q=\Pol Q'$ ($Q'\subseteq \GQuord(A)$).
Then $\Pola[d] Q=\Pola[d] Q'$ and we have 
$\Pol Q=\Pol Q'=_{\ref{A1c}\ref{A1cii}}(\Pola[d] Q')^{*_{d}}=(\Pola[d]
Q)^{*_{d}}$, consequently $\Xi_{d}(Q)$ by Definition~\ref{def:Xi}.

(ii)$\iff$(ii)$'$
follows from the properties of the classical Galois connection $\Pol-\Inv$
(in particular $[Q]_{\exists,\land,=}=\Inv\Pol Q$, cf.~Remark in
\ref{Galoisconnections}(3)).

(i)$\implies$(iii): Take $M:=\Pola[d]Q$. Then $M^{*_{d}}$ is a clone
by assumtion (i). Thus $\Pol M=M^{*_{d}}$ by 
Proposition~\ref{prop:StarisCloneIffGQuord}\ref{StarisClone-iii}.
Consequently $\Pol Q=M^{*_{d}}=\Pol M=\Pol \mathring M$, 
equivalently $[Q]_{\exists,\land,=}=[\mathring M]_{\exists,\land,=}$.

(iii)$\implies$(ii)' trivially holds.
  \end{proof}

\section{The $\boldsymbol{\GQuord}$-dimension of a clone}
\label{sec:GQuord-dimension}

In this section we go through several examples in order to illustrate
aspects of the theory and to highlight some of the distinctions it
allows us to make. Before we proceed to the examples, we make the
following definition, which is a strengthening of the notion `finitely
related' for a clone.

\begin{definition}\label{def:GQuordDim}
  Let $F$ be a clone (over a set $A$). We define the
  \New{$\GQuord$-dimension} of $F$ to be the minimum natural number
  $d$ such that $ F = (F^{(d)})^{*_d}. $ If no such natural number
  exists, we say that $F$ has infinite $\GQuord$-dimension.
\end{definition}

Although Definition~\ref{def:GQuordDim} makes sense for any function
clone, it is only interesting for clones $F$ which contain
$\cC$ (all constant operations). This is one of the
consequences of Proposition~\ref{prop:StarisCloneIffGQuord}.
\subsection*{(A) The Boolean case}\label{subsec:boolean}

\begin{example}\label{ex:Booleancase}

Here we examine Boolean clones. We will see that every
clone $F$ over a two-element set containing all constant operations
has $\GQuord$-dimension of either $1$ or $2$. It is well-known that
there are exactly seven distinct clones with constant operations on
the set $\{0,1\}$, so we just need to check that each is either $1$ or
$2$-dimensional. The seven clones, their generators, and dimensions
are listed below ($+$ denotes addition modulo 2).

\newcommand{\FM}{F_{\land\lor}} 
\newcommand{\FC}{F_{\text{c}}}  
\newcommand{\FU}{F_{\lnot}}     
\newcommand{\FL}{F_{+}}         
\newcommand{\FA}{F_{\land}}     
\newcommand{\FO}{F_{\lor}}      

\begin{center}
  \begin{tabular}{||c c c c ||}
    \hline
    Clone & Description & Generators & $\GQuord$-dimension  \\ [0.5ex] 
    \hline\hline
    $O_2$ & Full clone & $\wedge, \lnot$ & 1\\ 
    \hline
    $\FM$ & Monotone functions  & $\wedge, \vee, \cC $ & 1 \\ 
    \hline
    $\FC$ & Projections + Constants & $\cC$  & 2 \\
    \hline
    $\FU$ & Essentially unary  & $\lnot, \cC$  &  2\\
    \hline
    $\FL$ & Linear functions & $+, \cC$ & 2 \\
    \hline
    $\FA$ & Conjunctive functions  & $\wedge, \cC$ & 2 \\ 
    \hline
    $\FO$ & Disjunctive functions  & $\vee, \cC$ & 2 \\ 
    \hline
 
  \end{tabular}
\end{center}

In the following we shall present function tables in a slightly
unusual way if we consider functions as elements of a $d$-ary
``rectangular cube'' as in the figure in Example~\ref{RR3}. For a
unary function $g\in A^{A^{1}}$ we have a $1$-dimensional object
(line) and for a binary function $h\in A^{A^{2}}$ we get a
$2$-dimensional object (sqare), presented as follows, here for
$A=\{0,1\}$: 
\begin{center}
  $\begin{array}[c]{|cc}
     g(0)& g(1)\\\hline
     0& 1
   \end{array}$
   \quad and \quad
   $\begin{array}[c]{r|c c c}
      y=1&f(0,1)&f(1,1)\\
      0&f(0,0)&f(1,0)\\ 
      \cline{1-3}
      f(x,y)&0&1&=x
    \end{array}$
  \end{center}
i.e., each table is drawn upside down relative to what
  is customary, since we want the orientation to be compatible with
  the orientation we chose for our `geometrical' depiction of
  $3$-dimensional R-relations.

  We prove that the $\GQuord$-dimensions listed above are accurate by
  case analysis. We begin by examining all $1$-dimensional generalized
  quasiorders over $\{0,1\}$ which are also clone fragments. It is
  easy to see that there are exactly two: the full monoid over
  $\{0,1\}$, which is equal to $O_2^{(1)}$, and the monoid with two
  constant operations and the identity, which is equal to
  $\FM^{(1)}$. These are each listed in the table below.

\begin{center}
\begin{tabular}{||c c |c ||} 
 \hline 
 $1$-ary clone fragment $M$& Elements & $M^{*_1}$ \\ [0.5ex] 
 \hline\hline & & \\
 $\FM^{(1)}$ &  \begin{tabular}{ |cc } 
 0 & 1 \\\hline
 0 & 1  \\ 
 
\end{tabular},  \begin{tabular}{ |cc } 
 0 & 0 \\\hline
 0 & 1  \\ 

\end{tabular} , \begin{tabular}{ |cc } 
1 & 1 \\ \hline
 0 & 1  \\ 
 
\end{tabular} & $\FM$  \\[3ex] 
 \hline & & \\ 
 $O_2^{(1)}$ & \begin{tabular}{ |cc } 
0 & 1 \\ \hline
 0 & 1  \\ 
 
\end{tabular},
\begin{tabular}{ |cc } 
1 & 0 \\  \hline
 0 & 1  \\ 
 
\end{tabular},
\begin{tabular}{ |cc } 
 0 & 0 \\ \hline
 0 & 1  \\ 
 
\end{tabular},
\begin{tabular}{ |cc } 
 1 & 1 \\ \hline 
 0 & 1  \\ 
\end{tabular}
 & $O_2$   \\[3ex] 
 \hline
\end{tabular}
\end{center}
  Observe that both $\wedge$ and $\vee$ belong to
  $(\FM^{(1)})^{*_1}$. Since $\lnot \notin \FM^{(1)}$, it must be that
  $(\FM^{(1)})^{*_1} = \FM$, which shows that $\FM$ is a $1$-dimensional
  clone (since by Proposition~\ref{prop:StarisCloneIffGQuord}
  $(\FM^{(1)})^{*_1}$ is a clone and it is not $O_2$ since it does not
  contain unary negation). Obviously, $(O_2^{(1)})^{*_1} = O_2$. Since
  these are the only $1$-dimensional generalized quasiorders which are
  clone fragments, it follows that $\FM$ and $O_2$ are the only
  $1$-dimensional clones over $\{0,1\}$ (this also follows from
  \cite[Theorem~6.5]{JakPR2026}).

  To see that the other clones are $2$-dimensional, it is enough to
  see that their respective binary clone fragments $M$ are distinct
  generalized quasiorders, since then the respective distinct
  $M^{*_2}$ preclones (cf.\ Proposition~\ref{M6}) are all clones
  (cf.\ Proposition~\ref{prop:StarisCloneIffGQuord}). These binary clone
  fragments are all listed in the following table
(again elements are listed as
  operation tables) and it is obvious that they are distinct. 

  \begin{center}
    \begin{tabular}{||c| c |c ||}
      \hline 
     $2$-ary~clone &&\\ fragment $M$& {elements} & $M^{*_2}$ \\ [0.5ex] 
      \hline\hline & & \\ 
      $\FC^{(2)}$ &  \begin{tabular}[b]{ c|c c } 
                     1 & 0 & 0\\ 
                     0 & 0 & 0 \\ 
                     \hline
                     $c_0^{(2)}$ & x & 1\\
                   \end{tabular},
      \begin{tabular}[b]{ c|c c } 
        1 & 1 & 1\\ 
        0 & 1 & 1 \\  
        \hline
        $c_1^{(2)}$ & 0 & 1\\
      \end{tabular},
      \begin{tabular}[b]{ c|c c } 
        1 & 0 & 1\\  
        0 & 0 & 1 \\ 
        \hline
        $e_0^2 $& 0 & 1\\
      \end{tabular},
      \begin{tabular}[b]{ c|c c } 
        1 & 1 & 1 \\ 
        0 & 0 & 0\\  
        \hline
        $e_1^2$ & 0 & 1\\
 
      \end{tabular}

                              & $\FC$   \\[4ex] 
      \hline  & & \\
      $\FA^{(2)}$ & $ \FC^{(2)}$ \hspace{.1cm} and \hspace{.1cm} 
      \begin{tabular}[b]{ c|c c } 
        1 & 0 & 1\\   
        0 & 0 & 0 \\   \hline
        $\wedge$ & 0 & 1\\
       \end{tabular}   & $\FA$  \\[4ex] 

      \hline & & \\ 
      $\FO^{(2)}$ & $ \FC^{(2)}$ \hspace{.1cm} and
      \hspace{.1cm} \begin{tabular}[b]{ c|c c }1 & 1 & 1\\   
        0 & 0 & 1 \\  \hline
       $\vee$ & 0 & 1\\
        \end{tabular} & $\FO$   \\[4ex] 
      \hline  & & \\ 
      $\FU^{(2)}$ & $ \FC^{(2)}$ \hspace{.1cm} and
       \hspace{.1cm} \begin{tabular}[b]{ c|c c }  1 & 1 & 0\\ 0 & 1 & 0 \\ 
      \hline $\lnot x$& 0 & 1\\
       \end{tabular}, 
\begin{tabular}[b]{ c|c c } 
   1 & 0 & 0\\  0 & 1 & 1 \\  \hline $\lnot y$& 0 & 1\\                             \end{tabular} & $\FU$   \\[4ex] 
      \hline  & & \\ 
      $\FL^{(2)}$ & $\left\{ \begin{tabular}{ c|c c } 
                             1 & $z$ & $w$\\   
                             0 & $u$ & $v$\\ 
                             \hline
                               & 0 & 1\\ 
                            \end{tabular} 
               \;\middle|\; u + v + z + w \equiv 0 \right\}$
               & $\FL$   \\[4ex] 
      \hline  
    \end{tabular}
  \end{center} 

We show
  that $\FA^{(2)}$ is a generalized quasiorder and leave the
  verification of the others to the reader.
  Since all relations we consider in this example are clone fragments,
  they are all reflexive by Lemma~\ref{lem:clonesGiveReflexive}. Hence,
  we just need to check transitivity, in this case of
  $\FA^{(2)}$. Suppose that $g\in \ostar \FA^{(2)}$. We depict $g$ as a
  labeled cube as follows:
  \[
    g=
    \Cube[g(000)][g(001)][g(100)][g(101)][g(010)][g(011)][g(110)][g(111)][2],
  \]
  where the coordinates for $g(x,y,z)$ are given a `right-handed'
  orientation. The condition that $g \in \ostar \FA^{(2)}$ requires that
  each of its six faces belong to $\FA^{(2)}$, i.e.\ that each of the
  following six elementary basic translations belongs to $\FA^{(2)}$:
 \[
    \begin{tabular}{ c|c c }
      1 & g(001) & g(011)\\ 
      0 & g(000) & g(010) \\ 
      \hline
      $g(0xy)$  & 0 & 1\\

 \end{tabular}
 \hspace{.3cm}, \dots, \hspace{.3cm}
 \begin{tabular}{ c|c c }
   1 & g(011) & g(111)\\  
   0 & g(001) & g(101) \\ 
   \hline
   $g(xy1)$ & 0 & 1\\

 \end{tabular}.
\]
%
We want to show $\bDelta(g)=\{ g(xxy), g(xyx), g(xyy)\} \subseteq \FA^{(2)}$. We
proceed by case analysis. It is easy to see that any fragment of a
trivial clone with constants is a generalized quasiorder, so in
particular, $\FC^{(2)}$ is a generalized quasiorder. Hence, if
$\ebtrl \{g\} \subseteq \FC^{(2)}$, then
$\{ g(xxy), g(xyx), g(xyy)\} \subseteq \FC^{(2)} \subseteq \FA^{(2)}$.

So, we may assume that one of the elementary basic translations of $g$
is $\wedge$. Since here $\FA^{(2)}$ is a clone fragment and closed under
permutation of variables, $\ostar \FA^{(2)}$ is also closed under
permutation of variables. So, we may assume without loss of generality
that we are in one of two cases.


\begin{enumerate}[label=\textup{(\roman*)}]
\item $g(x0z) = x\wedge z$, i.e.\ that
  \[
    g= \Cube[0][0][0][1][g(010)][g(011)][g(110)][g(111)][1.5] .
  \]
  Observe that $g(110)$ is now forced to equal $0$, since otherwise
  $g(1yz) \notin \FA^{(2)}$. Propagation of this kind of reasoning shows
  that in this case
  \[
    g= \Cube[0][0][0][1][0][0][0][1][1.5],
  \]
  and we obtain that
  \[
    \{ g(xxy), g(xyx), g(xyy)\} = \left\{
      \begin{tabular}{ c|c c }
        1 & 0 & 1\\ 
        0 & 0 & 1 \\ 
        \hline
        $ e_0^2$ & 0 & 1\\

      \end{tabular} \hspace{.2cm},\hspace{.2cm}
      \begin{tabular}{ c|c c }
        1 & 0 & 1\\ 
        0 & 0 & 0 \\ 
        \hline
        $\wedge$ & 0 & 1\\

\end{tabular} 
\right\} \subseteq \FA^{(2)}.
\]

\item $g(x1z) = x \wedge z$, i.e.\ that
  \[
    g= \Cube[g(000)][g(010)][g(100)][g(110)][0][0][0][1][1.5] .
  \]
  In this case the following are the only two ways to complete $g$:
  \[
    g= \Cube[0][0][0][0][0][0][0][1][1.5], \text{ or } g=
    \Cube[0][0][0][1][0][0][0][1][1.5],
  \]
  and it is similarly straightforward that
  $\{ g(xxy), g(xyx), g(xyy)\} \subseteq \FA^{(2)}$.
\end{enumerate}

\end{example}

\subsection*{(B) Examples on a three element domain}

While on a Boolean domain every clone with constants has a
$\GQuord$-dimension of either $1$ or $2$, in general any positive
natural number can be the $\GQuord$-dimension of some clone on a
finite set. Unsurprisingly, this is already possible for a domain with
three elements, as we show in the next example.
\begin{example}\label{ex:3GQuord}
  We examine an infinite sequence of clones over $A = \{0,1,2\}$ for
  which the $\GQuord$-dimension of each clone is one greater than its
  predecessor. For $ d \geq 1$, we define a $d$-ary operation
  $f_d: \{0, 1, 2\}^d \to \{0, 1, 2\} $ by
  \[
    f_d(x_0, \dots, x_{d-1}) =
    \begin{cases}
      1 &\text{ if } x_0 = \dots = x_{d-1} = 2 \\
      0 &\text{else}.
    \end{cases}
  \]

  Then we set $F_d$ to be the clone generated by
  $\{ f_d\} \cup \cC$ and $F = \bigcup_{1 \leq d } F_d$. We
  record some easy facts about these clones.

  \begin{enumerate}[label=\textup{(\roman*)}]

  \item $f_{d} \in F_{d+1}$, since
    $f_{d}(x_0, \dots, x_{d-1}) = f_{d+1}(x_0, \dots, x_{d-1},2)$. It
    follows that
    \[ F_1 \subseteq F_2 \subseteq \dots \subseteq F_d \subseteq \dots
      \subseteq F
    \]
    is a chain whose union is $F$.
  \item \emph{Every operation belonging to $F_d$ is either a projection, a
    constant operation, or obtained from one of the $f_{d'}$
    ($1\leq d' \leq d$) by addition of some fictitious
    variables.} Indeed, it is easy to check that this collection is
    closed under composition and hence must equal $F_d$. It follows
    that all functions belonging to $F_d$ have essential arity bounded
    by $d$, so proper containment $F_d \subsetneq F_{d+1}$ in the
    above chain holds for all $d\geq 1$ (since $f_{d+1}$ depends on all of
    its $(d+1)$-many variables). Morevover, it follows that for all
    $n,d,d'$ satisfying $1\leq n \leq d$ and $n \leq d'$, we
    have $F^{(n)}_d = F^{(n)}_{d'}$.

  \item \emph{For every $d \geq 1$, the clone fragment $F^{(d)}_d$ is a
    generalized quasiorder.} The proof of this proceeds inductively. It
    is easy to check that $F_1^{(1)}$ is a generalized quasiorder and,
    using the characterization of the functions belonging to $F_d$
    obtained earlier, it is straightforward to show that
    $F^{(d+1)}_{d+1} = \ostar F^{(d)}_d$. We apply
    Lemma~\ref{lm:adjointproperties} and the inductive hypothesis that
    $F^{(d)}_d$ is a generalized quasiorder to conclude that
    $F^{(d+1)}_{d+1}$ is also generalized quasiorder.


  \item \emph{For all $d \geq 1$, we have $F = (F_d^{(d)})^{*_d}$.} Indeed,
    by (iii) above and Proposition~\ref{prop:StarisCloneIffGQuord}, we know
    that $(F_d^{(d)})^{*_d}$ is a clone whose $d'$-ary fragment is
    equal to $F_{d'}^{(d')}$ for all $d' \geq d$. Hence,
    $(F_d^{(d)})^{*_d}$ is a clone which contains each $F_{d'}$ and no
    more, which means it is equal to $F$.

  \item \emph{For all $d \geq 1$ and $d' > d$, we have
    $F_d = (F_d^{(d')})^{*_{d'}}$.} To see this, it is enough to
    establish that $F_{d}^{(d'+1)} = \ostar F_{d}^{(d')}$ for all
    $d' >d$. We first observe that $\ostar F_{d}^{(d')} \subseteq F$,
    since $F_d^{(d')} \subseteq F_{d'}^{(d')}$ and
    $F_{d'+1}^{(d'+1)} = \ostar F_{d'}^{(d')}$. The question is, can
    there be a function $g \in \ostar F_{d}^{(d')} $ which depends on
    more than $d$-many of its variables? Suppose there is such a
    $g(x_0, \dots, x_{d'})$. If $g$ depends on all of its variables,
    then by (ii) we know that
    $g(x_0, \dots, x_{d'}) = f_{d'+1}(x_0, \dots, x_{d'})$. This is
    impossible, since then
    $f_{d'}(x_0, \dots, x_{d'-1})= g(x_0, \dots, x_{d'-1},2) \in
    F_d^{(d')}$, which contradicts the fact that $F_d$ has essential
    arity bounded by $d$. If $g$ fails to depend on one of its
    variables, say $x_0$, then
    $g(x_0, \dots, x_{d'-1},2) \in F_d^{(d')}$, again contradicting
    the essential arity bound on $F_d$. Hence,
    $\ostar F_{d}^{(d')} \subseteq F_{d}^{(d'+1)}$. The other
    containment always holds for a clone with constants, so we are
    done.
 
  \end{enumerate}

  Hence, the $\GQuord$-dimension of $F$ is equal to $1$ (by (iv)
  above), while the $\GQuord$-dimension of each $F_d$ is equal to
  $d+1$ (by (v) above).

\end{example}

We conclude the section by showing that a clone with constants need
not be determined by its compatible generalized quasiorders (we thank
Gerg{\H o} Gyenizse for showing us this example).

\begin{example}\label{ex:3noGQuord}
  Let $A = \{0,1,2\}$ be a $3$-element set and let
  $\rho \subseteq A^3$ be the `not-all-not-equal' relation, i.e.\
  \[
    \theta := A^3 \setminus \{ (\pi 0,\pi 1,\pi 2) \mid \pi
    \in S_3 \}.
  \]
  Then set $F = \Pol\theta$. It is well-known that $F$ is a maximal
  clone~\cite{Ros1970}. We claim that $F$ has no nontrivial
  generalized quasiorders.

  Suppose towards a contradiction that there exists a nontrivial
  generalized quasiorder $\rho \in \Inv(F)$. Since $F$ is maximal, it
  must be that $F = \Pol\rho$ and so $F$ has finite generalized
  quasiorder dimension. Hence, there exists $d \geq 1$ such that
  $F = (F^{(d)})^*$. We will define a function which belongs to
  $\ostar F^{(d)}$, but does not belong to $F$, which is a
  contradiction.

  Indeed, define $f_{d+1} \in \Op^{(d+1)}(A)$ by
\[
f_{d+1}(x_0, \dots, x_{d}) = 
\begin{cases}
0 &\text{ if } x_0 = \dots = x_{d} = 0 \\
1 &\text{ if } x_0 = \dots = x_{d} = 1 \\
2 &\text{else}.
\end{cases}
\]
First, we notice that $f_{d+1} \notin F$, since it fails to preserve
$\theta$. To see this, apply it coordinatewise to the tuples
$ (1,2,1), (1,2,2), \dots (1,2,2) $ to obtain the tuple
\[
  (f_{d+1}(1,1,\dots, 1), f_{d+1}(2,2, \dots, 2), f_{d+1}(1, 2, \dots,
  2)) = (0,1,2).
\]
On the other hand, $\ebtrl(f) \subseteq F^{(d)}$, since fixing any
variable of $f$ to a constant value produces a function with at most
two output values and any such function preserves $\theta$. Thus
$f\in\ostar F^{(d)}$ by \ref{def:ostar}(2).

\end{example}

\section{Simple versus compound arity}\label{sec:simvscom}

In this section we compare the definitions of transitivity and
reflexivity for simple arity $\alpha = B^{d_1 + \dots + d_s}$ and
compound arity $\beta= B^{d_1} \times \dots \times B^{d_s}$. There is
an obvious bijection $\mu$ between $\alpha$ and $\beta$, since their
elements are, respectively, of the form
\[
  ( a_0, \dots, a_{d_1 + \dots + d_s-1}) \qquad \text{ and } \qquad
  \left( (b_0^1, \dots, b^1_{d_1-1}), \dots, (b^s_0, \dots,
    b^s_{d_s-1}) \right).
\]
Hence, we can define $\mu: \beta \to \alpha$ to be the function which
removes the inner parenthesis from $\beta$ tuples, i.e.\
\[
  \mu\left( (b_0^1, \dots, b^1_{d_1-1}), \dots, (b^s_0, \dots,
    b^s_{d_s-1}) \right) := ( b_0^1, \dots, b^1_{d_1-1}, \dots, b^s_0,
  \dots, b^s_{d_s-1} ).
\]

Hence, we see that $\alpha$ and $\beta$ have the same `rectangular'
geometry. The reason to distinguish $\alpha$ from $\beta$ is that the
definitions of transitivity and reflexivity for 
relations of arity $\alpha$ differ from those of arity $\beta$. A priori, the
distinction between compound and simple arity in this setting is only
formal. Hence, it is natural to wonder which of the following
implications hold (notation cf.~\ref{def:aritymap}):
\begin{align*}
  \rho \in \GQuord (A)^{(\alpha)} &\xRightarrow{ \text{ ? } } \rho \circ \mu \in \GQuord (A)^{(\beta)}\\
  \rho \in \GQuord (A)^{(\beta)} &\xRightarrow{ \text{ ? } } \rho \circ \mu^{-1} \in \GQuord (A)^{(\beta)}.
\end{align*}

The reader can check that all of the $2$-dimensional clone fragments
of simple arity $2^2$ in Example~\ref{ex:Booleancase} are also 
$2$-dimensional generalized quasiorders of compound arity 
$2^1 \times 2^1$  (i.e. $B_1 = 2$, $d_1= 1$, $B_2= 2$, $d_2 = 1$ in the 
notation of Definition~\ref{def:Rrelation}), so both of the
implications above hold for Boolean binary clone fragments. However,
neither implication holds in general, as we shall demonstrate in
Examples~\ref{ex:compoundnotsimple} and~\ref{ex:simplenotcompound} (in
the examples we allow ourselves the slight abuse of notation where we
do not distinguish between $\rho$ and $\rho \circ \mu$).

\begin{example}\label{ex:compoundnotsimple}
  In this example we show that there exists a
  $d$-dimensional generalized quasiorder of compound arity which is not a
  $d$-dimensional generalized quasiorder of simple arity (in the sense described
  above). Of course, there exist $d$-dimensional
  generalized quasiorders of compound arity whose arity prevents the
  definition of
  $d$-dimensional generalized quasiorder of simple arity from applying
  (e.g.\ if $\alpha = 3^1 \times 2^1$), so we restrict our attention
  to R-relations which have arity $B^d$ and $B^{d_1 + \dots + d_s}$,
  where $d = d_1 + \dots + d_s$. In fact, we refrain from providing
  examples for all possibilities and consider for this example a
  trivial situation where the relational arity is $4$.

  So, let
  \begin{align*}
    \rho &:= 
           \left\{ g(x,y) = 
           \SquareXY[g(0,0)][g(0,1)][g(1,0)][g(1,1)][1.5] \in 2^{2^1 \times 2^1} \;\middle|\; g(0,0) = g(1,0) , g(0,1)= g(1,1) 
           \right\}\\
         &= 
           \left\{
           \Square[0][0][0][0], \Square [1][1][1][1], \Square[0][1][0][1], \Square[1][0][1][0] 
           \right\}
  \end{align*}
  be an R-relation of rectangular arity
  $\alpha = \alpha_1 \times \alpha_2$ with
  $\alpha_1 = \alpha_2 = 2^1$, where the matrices are oriented so that
  $\alpha_1$ is the horizontal direction and $\alpha_2$ is the
  vertical direction. We claim that $\rho$ is a $2$-dimensional
  generalized quasiorder of compound arity. Indeed, $\rho$ contains both
  constant functions and it is easy to see that both
  $\rho^{(\alpha_1)}$ and $\rho^{(\alpha_2)}$ are internally reflexive
  and transitive (as $R$-relations over $2^{2^1}$).

  On the other hand, $\rho$ is \emph{not} internally reflexive if we
  consider it as a relation of arity $2^2$. To see this, consider the
  third function listed in the definition above, which we call
  $g_3$ (explicitly we have $g_{3}(x,y)=y$). Define $h(x,y) = g_3(0,x)$. Observe that
  \[
    h(x,y) = \Square[0][0][1][1]\in\trl_{2}(g_{3})
  \]
  would belong to $\rho$ if it were internally reflexive, a contradiction. So we
  conclude that $\rho$ is not internally reflexive.
\end{example}

\begin{example}\label{ex:simplenotcompound}
  In this example we give an example of a clone fragment which is a
  generalized quasiorder of simple arity, but fails to be a
  generalized quasiorder of compound arity. We first analyze the
  maximal clone of 
  $\mathbb{Z}_3$-quasilinear operations and then point out that all
  maximal clones of quasilinear functions have behave similarly.

  So, let $A = \{0,1,2\}$ be the underlying set of $\mathbb{Z}_3$ and
  let $m(x,y,z) = x-y+z$ be the standard Mal'cev operation for
  $\mathbb{Z}_3$. We then define
  \[
    \rho := \left\{ \Square[a][b][c][d] \in A^{2^1 \times 2^1} \;\middle|\;
      d = c -a +b \right\}
  \] and set $F := \Pol\rho$, which is the set of all quasilinear
  functions over $A$ (with respect to $\Z_{3}$).


We record
  the following basic facts about $F$.

  \begin{enumerate}[label=\textup{(\roman*)}]

  \item It is easy to check that $\rho$ is a
    $2$-dimensional generalized quasiorder of compound arity (those familiar with the
    Universal Algebra commutator will recognize $\rho$ as the algebra
    of $(1,1)$-matrices for $\mathbb{Z}_3$). It follows (cf.\ Proposition~\ref{prop:StarisCloneIffGQuord}) that $F^{(2)}$
    is a $2$-dimensional generalized quasiorder (of simple arity~$3^2$).

  \item The clone fragment $F^{(1)}=\{g(x)=a+bx\mid a,b\in\Z_{3}\}$
    (consisting of all permutations and constants) is not a
    generalized quasiorder 
    (here there is only one candidate for the arity, which is
    $3^1$). Indeed, this follows from the fact that
    \[
      f:= \ThreeSquareXY[2][2][2][1][2][0][1][2][0] \in \ostar
      F^{(1)} \text{ (all rows and colums belong to $F^{(1)}$),}
    \]
    but the diagonal $f(x,x)$ does not belong to $F^{(1)}$, which shows
    that $F^{(1)}$ is not transitive
   (cf.\ Example~\ref{ex:reftra} and Figure~\ref{fig:tra1}).


  \item
The clone fragment $F^{(2)}$ is \emph{not} a generalized
    quasiorder of compound arity
    $\alpha_1 \times \alpha_2 = 3^{1}\times 3^1$. 
To see this, it is
    enough to show that $\left(F^{(2)}\right)^{(\alpha_1)}$ is not
    transitive. To that end, consider the cube $h(x,y,z):=f(x,y)$ in
    Figure~\ref{fig:example} which can be seen as a mapping
$\overline{f} \in \left( A^{3^1}\right)^{3^2}$ defined by $\overline{f}(x,y) := c_{f(x,y)}$ where $c_{f(x,y)} \in A^{3^1}$ is the unary mapping with constant
    value $f(x,y)$ (the same $f$ given in (ii)).


    \begin{figure}    
\begin{center}
     \includegraphics{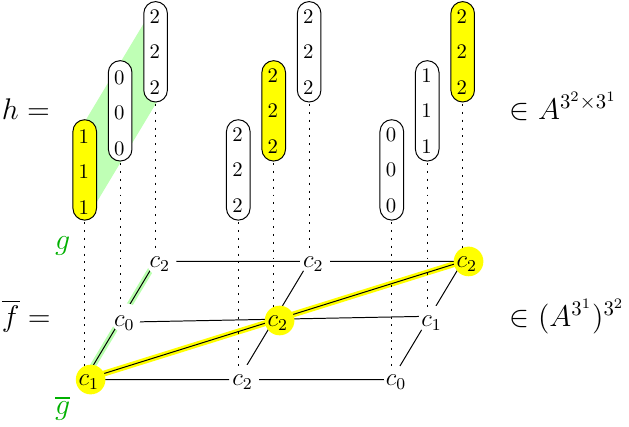}
    \end{center}
  \caption{The function $\overline f$\label{fig:example} in \ref{ex:simplenotcompound}(iii)}
    \end{figure}
    It is straightforward to see that
    $\overline{f} \in \ostar \left( F^{(2)}\right)^{(\alpha_1)}$. 
For example, the elementary translation
    $\overline{g}(y):=\overline{f}(0,y) \in (A^{3^1})^{3^1}$ is the function
    given by the triple $(c_1, c_0, c_2)$ (marked green in
    Figure~\ref{fig:example}), which belongs to 
    $(F^{(2)})^{(\alpha_1)}$, since $\overline{g}=g^{(\alpha_{1})}$
    for the binary operation $g(y,z):=h(0,y,z)=f(0,y)$ (the green marked
    left vertical face in the cube $h$), which obviously belongs to
    $F^{(2)}$ (since it is a permutation $(1,0,2)$ with a fictitious
    second variable).



    However, the diagonal $\overline{f}(x,x)$ does not belong to
    $\left(F^{(2)}\right)^{(\alpha_1)}$, since it
    corresponds to the operation
    $g':=$\scalebox{0.5}{$\ThreeSquareXY[1][2][2][2][2][1][1][2][2]$} which
   evidently does not belong to $F^{(2)}$ since $g'(x,0)=(1,2,2)$ is neither a
   permutation nor a constant and therefore does not preserve $\rho$.

    Hence $\bDelta(\ostar(F^{(2)})^{(\alpha_{1})})\not\subseteq
    (F^{(2)})^{(\alpha_{1})}$, i.e.,
    $\left(F^{(2)}\right)^{(\alpha_1)}$ is not transitive 
     (cf.~\ref{Adef:Refl+Trans}\ref{Adef:transitive}), as
    claimed, so $F^{(2)}$ is not a generalized quasiorder of compound
    arity (cf.\ \ref{Adef2:Refl+Trans}\ref{Adef2:transitive}
    and Figure~\ref{fig:compoundGQuord}).

\end{enumerate}

  Actually, the above observations hold for any maximal clone of
  quasilinear functions. In general, let $(A; +)$ be an elementary
  abelian $p$-group, let $m(x,y,z) = x-y+z$ be the standard Mal'cev
  operation associated to $(A,+)$, and set
  \[
    \rho := \left\{ \SquareXY[b][c][a][m(a-b+c)][2]\;\middle|\; a,b,c \in A
    \right\} \subseteq A^{2^1 \times 2^1}.
  \]

  Now take $F := \Pol\rho$. It is well known that $F$ is
  a maximal clone on $A$ (one of Rosenberg's five types~\cite{Ros1970}) and that $F$
  is equal to the collection of all $\mathbb{Z}_p$-quasilinear
  operations on $A$.

  Like in the special case above, it is easy to check that $\rho$ is a
  $2$-dimensional generalized quasiorder  of compound
  arity (with arity
  $\alpha_1 \times \alpha_2 := 2^1 \times 2^1$), so it follows that
  $F$ has $\GQuord$-dimension at most $2$. In (\cite{JakPR2027} ), the
  authors show that $F$ does not have any 
  nontrivial $1$-dimensional generalized quasiorder, so actually the
  $\GQuord$-dimension of $F$ is always $2$ (since otherwise the
  $\GQuord$-dimension is $1$, so $F= (F^{(1)})^{*_1}$ and $F^{(1)}$
  would be
  a nontrivial $1$-dimensional $\GQuord$).

  Suppose now that $F^{(1)}$ is not equal to $\Op(A)^{(1)}$ (so in
  particular, it is not a generalized quasiorder). Notice that this
  assumption only rules out the maximal boolean clone of quasilinear
  functions, since in all other cases not every unary operation is
  quasilinear. Since $F^{(1)}$ is reflexive, there must exist some
  function $f \in \ostar F^{(1)}$ such that $f(x,x) \notin F$. The
  construction of $\overline{f}$ in the above example works in this
  situation also, which shows that such $F^{(2)}$ are
  $2$-dimensional generalized quasiorders of simple arity but not of
  compound arity. 


\end{example}


\section{Concluding remarks, questions and problems}\label{sec:conclusion}

The theory of generalized quasiorders (of finite $\GQuord$-dimension)
provides a new tool to study clones of operations with constants. 
Since the notion of a $d$-dimensional
generalized quasiorder is new in the generality introduced in this
paper, the field for further research is wide and it remains to be
seen which direction is most promising. We mention some topics.

(I) \emph{The lattice $\cL_{A}$ of clones.} Generalized quasiorders can serve
for the investigation of 
the lattice $\cL_{A}$ of all clones on a (finite) set $A$. 

For
instance, for each set $Q\subseteq\GQuord(A)$ of the form $Q=\GQuord
F_{0}$ (for some clone $F_{0}$), the set $\{F\in\cL_{A}\mid \GQuord
F\subseteq Q\}$ is an order-filter in the clone lattice. What is the
structure of these filters, e.g., what are the minimal
elements for particular $F_{0}$ and which maximal
clones belong to them? Similar to the results in \cite{JakPR2027},
probably only few maximal clones have non-trivial generalized quasiorders at
all.

Further, the following are each easily seen to determine a
sub-meet-semilattice of the lattice $\cL_{A}$:

\begin{itemize}
\item[(A)] The clones $F\in\cL_{A}$ with generalized quasiorder
  dimension at most $d$, for some finite $d$.
\item[(B)] The clones $F\in\cL_{A}$ with finite generalized
  quasiorder dimension.
\item[(C)] The clones $F\in\cL_{A}$ which are determined by
  their generalized quasiorders, i.e., satisfying $F=\Pol\GQuord F$.
\end{itemize}

Actually, the clones listed in (A) and (C) above determine
\emph{complete} sub-meet-semilattices of the clone lattice over
$A$. We don't know if the clones of kind (B) above are the same as the
clones of kind (C). That is, given a clone $F$ on a finite set $A$
which is determined by its invariant generalized quasiorders, does it
always follow that there exists a $d$ so that $(F^{(d)})^{*_d} = F$?
This holds if and only if every descending chain of finite-dimensional
clones is finite, equivalently, if the clones in (B) also determine a
complete sub-meet-semilattice of the clone lattice over $A$.

We remark that it is straightforward to construct an infinite
descending chain of finite dimensional clones over an infinite domain
$A$: relabel the set of outputs for each of the R-relations $F_d$
given in Example~\ref{ex:3GQuord} so that outputs of functions in
$F_d$ are disjoint from the other $F_{d'}$ outputs, let $A$ be the
union of all such relabeled outputs, and extend each of the relabeled
$F_d$ by all constant functions over $A$ to obtain a set of relations
$S = \{ F_d' \mid 1 \leq d \}$. It is easy to see that $\Pol S$ is
not a finite-dimensional clone, so it is not listed in (B) above, but
it is obviously listed in (C). The infinite descending chain that
terminates in $F$ is given by polymorphism clones of initial segments
of the sequence of relations in $S$.

Also, can anything be said about joins in the clone lattice over $A$
for each of the classes (A), (B), and (C) above?

(II) \emph{Properties of varieties.} It is known that a variety
$\mathcal{V}$ of algebras is congruence meet semidistributive if and
only if all $2$-dimensional congruence intervals are trivial
\cite{KeaSW2022} \cite{Moo2025}. Are there any other interesting
characterizations of Mal'cev conditions in terms of the behavior of
the higher dimensional generalized quasiorders across a variety?

(III) \emph{Galois-closures.} In
Theorem~\ref{thm:dPolQuord} and
Corollary~\ref{A3cor}(i),(i)$'$,(i)$''$ we characterized the
Galois-closure $\Pola[d]\dGQuord M$ for $M\subseteq \Opa[d](A)$. What
can be said about the relational side of the Galois connection
$\Pola[d]-\dGQuord$, i.e., about $\dGQuord\Pola[d]Q$ for some
$Q\subseteq\dGQuord(A)$? Or, more general, is there an internal
characterization of the Galois closures with respect to the Galois
connection $\Pol-\GQuord$?

(IV) \emph{Constructions with generalized quasiorders.} 
Which relational constructions produce
generalized quasiorders if the input relations are generalized
quasiorders? In connection with the questions in (III) it is
particularly interesting to know which pp-formula constructions have
this property. Many results given in the paper \cite{JakPR2026} for
$1$-dimensional generalized quasiorders can be extended or generalized to
higher-dimensional generalized quasiorders.
Specializing to arity transformations (cf.~\ref{def:aritymap}) we may ask
for which arity transformations $\mu:\beta\to\alpha$ we have one of the following
properties for each (or for which) $\rho\in\RRela[\alpha](A)$ (cf.\
Section~\ref{sec:simvscom}): 
\begin{align*}
  \rho\in\GQuorda[\alpha](A)& \Longleftrightarrow
\rho\circ\mu\in\GQuorda[\beta](A),\\
\rho\in\GQuorda[\alpha](A)&\Longrightarrow
\rho\circ\mu\in\GQuorda[\beta](A),\\
\rho\in\GQuorda[\alpha](A)&\Longleftarrow \rho\circ\mu\in\GQuorda[\beta](A).
\end{align*}

(V) \emph{Generalized quasiorders of particular algebras or clones.} In addition
to the questions in (II), one can ask for the ($d$-dimensional)
generalized quasiorders of particular algebras $(A,F)$, e.g., in a
chosen variety or with special clone properties like being a maximal
clone (cf.\ Example~\ref{ex:3noGQuord}).





\def\cprime{$'$} \def\cprime{$'$}

Andrew Moorhead: \texttt{apmoorhead@gmail.com}\\
Reinhard Pöschel: \texttt{reinhard.poeschel@tu-dresden.de}

\end{document}